\def\bZ{{\mathbb Z}}
\def\bN{{\mathbb N}}
\def\bP{{\mathbb P}}
\def\bE{{\mathbb E}}
\def\cF{\mathcal F}
\def\bF{\mathbb F}
\def\cC{\mathcal C}
 \def\llb{\llbracket}
\def\rrb{\rrbracket}
\def\bfX{\boldsymbol{X}}
\newcommand\1{\leavevmode\hbox{\rm \small1\kern-0.35em\normalsize1}}
\def\mCov{\text{Cov}}
\def\bR{\mathbb{R}}
\def\N{\mathbb N}
\def\Supp{\text{supp}({\bf P})}
\newtheorem{theorem}{Theorem}
\newtheorem{corollary}{Corollary}
\newtheorem{proposition}{Proposition}
\newtheorem{lemma}{Lemma}
\newtheorem{remark}{Remark}
\newtheorem{assumption}{Assumption}
\title{Sparse Markov Models for High-dimensional Inference}
\author[1]{G. Ost \thanks{guilhermeost@im.ufrj.br}}
\author[2]{D. Y. Takahashi \thanks{takahashiyd@gmail.com}}
\affil[1]{Universidade Federal do Rio de Janeiro, Rio de Janeiro, Brazil}
\affil[2]{Universidade Federal do Rio Grande do Norte, Natal, Brazil}
\date{\today}
\begin{document}

\maketitle

\tableofcontents

\abstract
Finite order Markov models are theoretically well-studied models for dependent discrete data.  Despite their generality, application in empirical work when the order is large is rare.  Practitioners avoid using higher order Markov models because (1) the number of parameters grow exponentially with the order and (2) the interpretation is often difficult. Mixture of transition distribution models (MTD)  were introduced to overcome both limitations. MTD represent higher order Markov models as a convex mixture of single step Markov chains, reducing the number of parameters and increasing the interpretability. Nevertheless, in practice, estimation of MTD models with large orders are still limited because of curse of dimensionality and high algorithm complexity. Here, we prove that if only few lags are relevant we can consistently and efficiently recover the lags and estimate the transition probabilities of high-dimensional MTD models.
The key innovation is a recursive procedure for the selection of the relevant lags of the model.  Our results are  based on (1) a new structural result of the MTD and (2) an improved martingale concentration inequality. We illustrate our method using simulations and a weather data.

\medskip 

{\it \bf Keywords:} Markov Chains, High-dimensional inference, Mixture Transition Distribution

\section{Introduction}
 From daily number of COVID-19 cases to the activity of neurons in the brain, discrete time series are ubiquitous in our life. 
 A natural way to model these time series is by describing how the present events depend on the past events, \emph{i.e.}, characterizing the transition probabilities. 
Therefore, finite-order Markov chains  - models specified by transition probabilities that depend only on a limited portion of the past - are an obvious choice to model time series with discrete values. The length of the portion of the relevant past defines the order of the Markov chain. At first glance, estimating transition probabilities of a Markov chain from the data is straightforward. Given a sample $X_{1:n}:=(X_1, X_2, \ldots, X_n)$ of a stationary $d$-th order Markov chain on a discrete alphabet $A$, the empirical transition probabilities are computed, for all past $x_{-d:-1}:=(x_{-d},\ldots,x_{-1})\in A^{\{-d,\ldots, -1\}}$ and symbol $a\in A$, as 
$$\hat{p}_n(a|x_{-d:-1}):= \frac{N_n(x_{-d:-1},a)}{\sum_{b\in A}N_n(x_{-d:-1},b)},$$
where $N_n(x_{-d:-1},a)$ denotes the number of occurrences of the past $x_{-d:-1}$ followed by the symbol $a$ in the sample $X_{1:n}$.

Nevertheless, some difficulties become apparent. First, for a Markov chain of order $d$, we have to estimate $|A|^{d}(|A|-1)$ transition probabilities ({\it parameters}), making the uniform control of estimation errors much harder when the order $d$ increases. 
One solution to avoid the exponential increase in the number of parameters is to consider more parsimonious classes of models. One such popular class of models is the {\it variable length Markov chains} (VLMC), in which $$\bP(X_t = a|X_{t-d:t-1}=x_{-d:-1}) = \bP(X_t = a|X_{t-\ell:t-1}=x_{-\ell:-1}),$$
where $\ell$ is a function of the past $x_{-d:-1}$ \citep{rissanen1983universal, buhlmann1999variable, galves2012context}. The relevant portion $x_{-\ell:-1}$ of the past $x_{-d:-1}$ is called a {\it context}. The key feature of VLMC is that all transition probabilities with the same context have the same values. Therefore, denoting $\tau$ the set of all contexts, the number of transition probabilities that needs to be estimated reduces to $|\tau|(|A|-1)$.  
Another class of models that is even more parsimonious is the {\it Minimal Markov Models} - also known as {\it Sparse Markov Chains} (SMC) \citep{garcia2011minimal,jaaskinen2014sparse}. In SMC, we say that the pasts $x_{-d:-1}$ and $y_{-d:-1}$ are related if for all symbols $a \in A$,
$$\bP(X_t = a|X_{t-d:t-1}=x_{-d:-1}) = \bP(X_t = a|X_{t-d:t-1}=y_{-d:-1}).$$ This relation generates the equivalent classes $\mathcal{C}_1, \ldots,\mathcal{C}_K$ that partition $A^{\{-d,\ldots, -1\}}$. Now, the number of transition probabilities that needs to be estimated is $K(|A|-1)$. 
Both VLMC and SMC have the advantage of better balancing the bias and variance tradeoff. Nevertheless, in any of the two models we still need to estimate the transition probability using $\hat{p}_n(a|x_{-d:-1})$, either because we need to estimate the largest context (for VLMC) or because we need to first calculate the transition probabilities to establish the partitions (for SMC). This creates a second difficulty.
For the estimator $\hat{p}_n(a|x_{-d:-1})$ to have any meaning, we have to observe the sequence $x_{-d:-1}$ in the sample $X_{1:n}$ at least once. By ergodicity, the number of times that we will observe the sequence $x_{-d:-1}$ is roughly $n\bP(X_{1:d}= x_{-d:-1})$. It is straightforward to show that, if the transition probabilities are bounded below from zero, there exists a constant $c > 0$ such that $\bP(X_{1:d} = x_{-d:-1}) < e^{-cd}$. Therefore, in general, it is hopeless to have a reasonable estimator $\hat{p}_n(a|x_{-d:-1})$ if $d > (1+\varepsilon)\log n/c$, for some positive value $\varepsilon$. This imposes a fundamental limit to the size of the past that can be included in the description of the time series.   

Notwithstanding, Markov chains with small orders are not always consistent with the known workings of natural phenomena where the transition probabilities might depend on remote pasts. For example, in predicting whether today will be a warm or cold day,  we might need know remote past events like the corresponding weather approximately a year ago   \citep{kiraly2006correlation, yuan2013long}. Physiological phenomena in humans with cycles of different lengths might result from dependence on events that happens at vastly different temporal scales \citep{gilden19951, chen1997long,buzsaki2004neuronal}. Importantly, not all portions of the past are necessarily relevant. These observations motivate us to explore sparser representations of the dependence on the past events. The {\it mixture of transition distribution} model (MTD) is a subclass of finite order Markov chains that can be used to obtain such sparse representation. Similar to VLMC and SMC, MTD was initially introduced to overcome the problem of exponential increase in the number of transition probabilities for Markov chains \citep{Raftery_85, Berchtold_Raftery:02}. MTD represents higher order Markov models as a convex mixture of single-step Markov chains, where each single-step Markov chain depends on a single time point in the past. If a MTD model is a mixture of only few single step Markov chains, we naturally obtain a class of sparse Markov chains that depends only on a small portion of the past events. Nevertheless, available methods to consistently estimate the transition probabilities of MTD still need to consider all the past events up to the MTD order \citep{Berchtold_Raftery:02}, which might include irrelevant portions of the past. In practice, this fact still restricts the MTD order to $d=\mathcal{O} (\log n)$. 

In this work, we show a simple method that consistently recovers the relevant part of the past even when the order $d$ of the MTD model is proportional to the sample size $n$ (\textit{i.e}, $d=\mathcal{O}(n)$) if the size of the relevant past is $\mathcal{O}(\log n)$. As a consequence we prove that we can consistently estimate the transition probabilities for high dimensional MTD under sparsity constraint.  Our estimator is computationally efficient, consisting of a forward stepwise procedure that finds the candidates for relevant past portions and a cutting procedure to eliminate the irrelevant portions. The theoretical guarantees of our estimator are based on a novel structural result for MTD and an improved martingale concentration inequality. Both results might have an interest in its own. Moreover, when the alphabet is binary, we show that the estimator can be further improved. We prove that in several cases, our estimator is minimax rate optimal. 

Finally, using simulated data, we show that our method's performance is in general superior to a best subset selection method, where the lags with $k$ largest weights are selected after estimating the model with a classical MTD estimation method \citep{berchtold2001estimation}, and similar to the performance of Conditional Tensor Factorization (CTF) based on higher order Markov chain estimation when the order is moderate \citep{sarkar2016bayesian}. We also applied our method on a weather data to model a binary sequence indicating days with and without rains. Our method successfully capture long-range dependencies (\emph{e.g.} annual cycle) that were not detected neither by VLMC algorithm with BIC order selection \citep{csiszar2006context} nor by the CTF based higher order Markov chain estimation. 
Let us mention that recently new Bayesian approaches for higher order VLMC and MTD selection were introduced in \citep{kontoyiannis2020bayesian, heiner2021estimation}, where a posteriori most likely model estimation is considered.
These works provide interesting alternative approaches for modeling higher order Markovian dependence in a Bayesian setting.

We organized the paper as follows. In Section \ref{sec:notations} we introduce the main notations, definitions, and assumptions that we will use throughout the paper. In Section \ref{sec:SLS} we introduce the algorithms to select the relevant part of the past. In Section \ref{Sec:ETP} we provide an estimate of the rate of convergence of the estimator for the transition probabilities.
In Section \ref{sec:minimax_rate}, we show our estimator achieves the optimal minimax rate.
In Section \ref{sec:sim}  we illustrate the performance of the proposed estimators through simulations and an application on a weather data.

\section{Notation, model definition and preliminary remarks} \label{sec:notations}

\subsection{General notation}

We denote $\bZ=\{\ldots -1,0,1,\ldots\}$ and $\bZ_+=\{1,2\ldots\}$  the set of integers and positive integers respectively.
For $s,t\in\bZ$ with $s\leq t$, we write $\llb s,t \rrb $ to denote the discrete interval $\bZ\cap [s,\ldots,t]$. 
Throughout the article $A$ denotes a finite subset of $\bR$, called {\it alphabet}. The elements of $A$ will be denoted by the first letters of the alphabet $a$, $b$ and $c$. Hereafter, we denote $\|A\|_{\infty}=\max_{a\in A}|a|$ and  $Diam(A)=\max_{a,b\in A}|a-b|.$
For each $S\subset \bZ$, the set $A^{S}$ denotes the set of all $A$-valued strings $x_S=(x_j)_{j\in S}$ indexed by the set $S$.
To alleviate the notation, if $S=\llb s,t \rrb$ for $s,t\in\bZ$ with $s\leq t$, we write $x_{s:t}$ instead of $x_{\llb s, t\rrb}$. 
For any non-empty subsets $U\subset S\subseteq \bZ$ and any string $x_{S}\in A^{S}$, we denote $x_{(S\setminus U)}\in A^{(S\setminus U)}$ the string obtained from $x_{S}$ by removing the string $x_{U}\in A^{U}$. For all $t\in\bZ$ and $S\subset\bZ$, we will write in some cases $t+S$ do denote the set $\{t+s:s\in S\}$.

The set of all finite $A$-valued strings is denoted by
$$
\mathcal{A}=\bigcup_{S\subset\bZ: S\ \text{finite}}A^{S}.
$$
For all $x\in \mathcal{A}$, we denote $S_x\subset \bZ$ the set 
indexing the string $x$, i.e., such that $x\in A^{S_x}$.  

Given two probability measures $\mu$ and $\nu$ on $A$, we denote $d_{TV}(\mu,\nu)$ the total variation distance between $\mu$ and $\nu$, defined as
$$
d_{TV}(\mu,\nu)=\frac{1}{2}\sum_{a\in A}|\mu(a)-\nu(a)|.
$$
For $q\in\bZ_+$, the $\|\cdot\|_{q}$-norm of vector $v\in \bR^L$ is defined as
$$
\|v\|_{q}=\left(\sum_{\ell=1}^{L}|v_{\ell}|^q\right)^{1/q}.
$$
The dimension $L\in\bZ_+$ will be implicit in most cases.

For two probability distributions $P$ and $Q$ on $A^{\llb 1, k\rrb}$ where $P$ is absolutely continuous with respect to $Q$, we denote $KL(P||Q)$ the {\it Kullback-Leibler} divergence between $P$ and $Q$, given by
$$
KL(P||Q)=\sum_{x_{1:k}\in A^{\llb 1, k\rrb}}P(x_{1:k})\log\left(\frac{P(x_{1:k})}{Q(x_{1:k})}\right).
$$

\subsection{Markov models}
Let $\bfX=(X_t)_{t\in\bZ}$ be a discrete time stochastic chain, defined in a suitable probability space $(\Omega,\cF,\bP)$, taking values in an alphabet $A$.
For a $d\in\bZ_+$, we say that ${\bf X}$ is a \emph{Markov chain of order $d$} if for all $k\in \bZ_+$ with $k>d$, $t\in\bZ$ and $x_{t-k:t}\in A^{\llbracket t-k, t\rrbracket}$ with $\bP(X_{t-k:t-1}=x_{t-k:t-1})>0$, we have
    \begin{equation}
    \label{def:Markov_property}
    \bP\left(X_t=x_t|X_{t-k:t-1}=x_{t-k:t-1}\right)=\bP\left(X_t=x_t|X_{t-d:t-1}=
    x_{t-d:t-1}\right).
    \end{equation}

We say that a Markov chain is {\it stationary} if $X_{s:t}$ and $X_{s+h:t+h}$ have the same distribution
for all $t,s,h\in\bZ$.
Throughout the article, the distribution of a stationary Markov chain will be denoted by ${\bf P}$. For a finite $S\subset \bZ$ and $x_S\in A^S$, we write ${\bf P}(x_S)$ to denote $\bP(X_S=x_S)$.
The {\it support} of a stationary Markov model is the set $\text{supp}({\bf P})=\{x\in \mathcal{A}: {\bf P}(x_{S_x})>0\}.$ 

For stationary Markov chains, the conditional probabilities in \eqref{def:Markov_property} do not depend on the time index $t$. Therefore, for a stationary Markov chain of order d, for any $a\in A$, $x_S\in \Supp$ with $S\subseteq \llbracket -d,-1\rrbracket$ and $t\in \bZ$,
we denote 
\[
p(a|x_S)=\bP\left(X_{t}=a|X_{t+S}=x_S\right).
\]

Notice that $p(\cdot|x_S)$ is a probability measure on $A$, for each fixed past $x_S\in \Supp$. The set $\left\{p(\cdot|x_{-d:-1}):x_{-d:-1}\in  \Supp \right\}$ is called
the family of {\it transition probabilities} of the chain. 
In this article, we consider only stationary Markov chains. 

For a Markov chain of order $d$, the {\it oscillation} $\delta_j$ for $j\in \llb -d, -1\rrb $ is defined as
\begin{equation*}
\delta_j=\max \{d_{TV}\left(p(\cdot|x_{-d:-1}),p(\cdot|y_{-d:-1})\right): (x_{-d:-1},y_{-d:-1}) \in A^{\llb -d, -1\rrb}, x_{-k}=y_{-k},\ \forall \ k\neq j\}. 
\end{equation*}
The oscillation is useful to measure the influence of a $j$-th past value in the values of the transition probabilities.

\subsection{Mixture transition distribution (MTD) models}
A MTD model of order $d\in \bZ_+$ is a Markov chain of order $d$ for which the associated family  of transition probabilities $\left\{p(\cdot|x_{-d:-1}):x_{-d:-1}\in \Supp\right\}$
admits the following representation:
\begin{equation}
\label{def:transition_probabilities}
p(a|x_{-d:-1})=\lambda_0p_0(a)+\sum_{j=-d}^{-1}\lambda_jp_j(a|x_{j}), \ a\in A, 
\end{equation}
with $\lambda_0, \lambda_{-1}, \ldots,\lambda_{-d}\in [0,1]$ satisfying $\sum_{j=-d}^{0}\lambda_j=1$ and $p_0(\cdot)$ and $p_j(\cdot|b)$,$j\in \llb -d, -1\rrb $ and $b\in A$, being probability measures on $A$. 

Following \citep{Berchtold_Raftery:02}, we call  the index $j\in \llb -d, 0\rrb $ of the weight $\lambda_j$ in \eqref{def:transition_probabilities} the {\it $j$-th lag} of the model. The representation in  \eqref{def:transition_probabilities} has the following probabilistic interpretation.  To sample a symbol from $p(\cdot|x_{-d:-1})$, we first choose a lag in $\llb -d, 0\rrb$  randomly, being $\lambda_j$ the probability of choosing the lag $j$. Once the lag has been chosen, say lag $j$, we then sample a symbol from the probability measure $p_j(\cdot|x_{j})$ which depends on the past $x_{-d:-1}$ only through the symbol $x_{j}$. Notice that a symbol is sampled independently from the past $x_{-d:-1}$, whenever the lag $0$ is chosen.  

For later use, let us define the conditional average at lag $j$ as
\begin{equation}
\label{def:Conditional_average_pj_given_b}
m_j(b)=\sum_{a\in A}ap_j(a|b),    
\end{equation}
for each $j\in \llb -d, -1\rrb$ and $b\in A$.

For a MTD model of order $d$, we have that  the oscillation $\delta_j$ of the lag $j\in \llb -d, -1\rrb $ can be written as, 
\begin{equation}
\label{def:index_weights}
\delta_j=\lambda_j \max_{b,c\in A}d_{TV}(p_j(\cdot|b),p_j(\cdot|c)).
\end{equation}
Notice that in this case $\delta_j=0$ if and only if either $\lambda_j=0$ or $d_{TV}(p_j(\cdot|b),p_j(\cdot|c))=0$ for all $b,c\in A$.


In the sequel, we say that the lag $j$ is {\it relevant} if $\delta_j>0$, and {\it irrelevant} otherwise. 
We will denote $\Lambda$ the set of all relevant lags, i.e.,  
\begin{equation}
 \Lambda=\{j\in \llb -d, -1\rrb: \delta_j>0\}.   
\end{equation}
The set $\Lambda$ captures the dependence structure of the MTD model.
The size $|\Lambda|$ of the set $\Lambda$ represents the degree of sparsity of the MTD model. The smaller the value of $|\Lambda|$, the sparser the MTD model.

The following quantities will appear in many of our results:
\begin{equation}
\label{def:delta_min_and_}
\delta_{min}=\min_{j\in\Lambda}\delta_j \ \text{and} \ \tilde{\delta}_{min}=\min_{j\in\Lambda}\lambda_j\|m_j\|_{Lip},
\end{equation}
where $\|m_j\|_{Lip}=\max\{|m_j(b)-m_j(c)|/|b-c|:b,c\in A,b\neq c\}$ denotes the Lipschitz norm of the function $m_j$ defined in \eqref{def:Conditional_average_pj_given_b}.
One can check easily that these quantities coincide when the alphabet A is {\it binary} (i.e. $A=\{0,1\}$). For general alphabets, the following inequality holds:
$$
\delta_{min}\geq \|A\|^{-1}_{\infty}\tilde{\delta}_{min}\min_{b,c\in A:b\neq c}|b-c|.
$$



\subsection{Statistical lag selection}
Suppose that we are given a sample $X_{1:n}$ of a MTD model of known order $d < n$ and whose set of relevant lags $\Lambda$ is unknown. 
The goal of {\it statistical lag selection} is to estimate the set $\Lambda$ from the sample $X_{1:n}$.
Our particular interest is in the high-dimensional setting in which the parameters $d=d_n$ and $|\Lambda|=|\Lambda_n|$  scale as a function of the sample size $n$.
Let us write  $\hat{\Lambda}_n$ to indicate an estimator of the set of relevant lags $\Lambda$ computed from the sample $X_{1:n}$. We say that the estimator $\hat{\Lambda}_n$ is {\it consistent} if
$$
\bP(\hat{\Lambda}_n\neq \Lambda)\to 0 \ \text{as} \ n\to\infty.
$$
With respect to statistical lag selection, our goal is to exhibit sufficient conditions for each proposed estimator guaranteeing its consistency.

\subsection{Empirical transition probabilities}


Let $n,m$ and $d$ be positive integers such that $n-m> d$. We denote for each $a\in A$ and $x_S\in \mathcal{A}$ with $S\subseteq \llb -d, -1\rrb$ non-empty,
\[
N_{m,n}(x_S,a)=\sum_{t=m+d+1}^{n}1\{X_{t+j}=x_{j},j\in S,X_{t}=a\}.
\]
The random variable $N_{m,n}(x_S,a)$ indicates the number of occurrences of
the string $x_S$ ``followed'' by the symbol $a$,
in the last $n-m$ symbols  $X_{m+1:n}$ of the sample $X_{1:n}$. 
We also define $\bar{N}_{m,n}(x_S)=\sum_{a\in A}N_{m,n}(x_S,a)$. 
With this notation, the empirical transition probabilities computed from
the last $n-m$ symbols
$X_{m+1:n}$ of the sample $X_{1:n}$ are defined as,
\begin{equation}
\label{def:emp_trans_proba}
\hat{p}_{m,n}(a|x_S)=\begin{cases}
\frac{N_{m,n}(x_S,a)}{\bar{N}_{m,n}(x_S)}, \ \text{if} \ \bar{N}_{m,n}(x_S)>0\\
\frac{1}{|A|}, \ \text{otherwise}
\end{cases}.
\end{equation}
When the countings are made over the whole sample $X_{1:n}$, we denote $N_{n}(x_S,a)$ and $\bar{N}_{n}(x_S)$ the corresponding counting random variables, and $\hat{p}_{n}(a|x_S)$ the corresponding empirical transition probabilities. 

In the next sections, the estimators for the set of relevant lags we propose in this paper rely on these empirical transition probabilities.
If $\hat{\Lambda}_m$ denotes an estimator for the set of relevant lags $\Lambda$ computed from $X_{1:m}$, we expect that under some assumptions (guaranteeing in particular the consistency of $\hat{\Lambda}_m$) the empirical transition probability $\hat{p}_{m,n}(a|x_{\hat{\Lambda}_m})$ converges (in probability) to $p(a|x_{\Lambda})$ as $\min\{n,m\}\to\infty$, for any $x_{-d:-1}\in\Supp$. To understand the convergence for the transition probabilities of high order Markov chains is crucial in our analysis.

\subsection{Assumptions}
\label{sec:assump}
We collect here the main assumptions used in the article. 

\begin{assumption}
\label{Ass:MTD_stationary}
The MTD model has {\it full support}, that is, $\Supp=\mathcal{A}$.
\end{assumption}

 In other words, Assumption \ref{Ass:MTD_stationary} means that $\bP(X_S=x_S)={\bf P}(x_{S})>0$ for any string $x_S\in A^S$ with $S\subset\bZ$ finite. 
This means that the marginal distributions of the distribution generating the data are strictly positive. Such a condition is usually assumed in the problem of estimating the graph structure underlying graphical models (see for instance Chapter 11 of \citep{Wainwright2019}). Notice that this assumption implies, in particular, that 

\begin{equation}
\label{def:pstar}
p_{min}=\min\{p(a|x_{\Lambda}):a\in A, \ x_{\Lambda}\in A^{\Lambda}\}>0,    
\end{equation}

where $p(\cdot|x_{\Lambda})$ are the transition probabilities of MTD generating the data.


\begin{assumption}
\label{Ass:Concentration_inequalities}
The quantity $\Delta:=1-\sum_{j\in\Lambda}\delta_j>0$,
where $\delta_j$ is given by \eqref{def:index_weights}.
\end{assumption}

We have that $\lambda_0>0$ is a sufficient condition  to Assumption \ref{Ass:Concentration_inequalities} to hold. To check this, notice that
$$
\sum_{j\in\Lambda}\delta_j=\sum_{j\in\Lambda}\lambda_j\max_{b,c\in A}d_{TV}(p_j(\cdot|b),p_j(\cdot|c))\leq \sum_{j\in\Lambda}\lambda_j=1-\lambda_0,
$$
where we have used that $d_{TV}(p_j(\cdot|b),p_j(\cdot|c))\leq 1$
for all $b,c\in A$ and $j\in\Lambda.$ Hence, it follows that $\Delta>0$ whenever $\lambda_0>0$.

Assumptions \ref{Ass:MTD_stationary} and \ref{Ass:Concentration_inequalities} are used to obtain concentration inequalities for the counting random variables $N_{m,n}(x_S,a)$ and $\bar{N}_{m,n}(x_S)$ appearing in the definition of the empirical transition probabilities $\hat{p}_{m,n}(a|x).$


The next assumption is as follows.
\begin{assumption}
\label{Ass:non_determinist_conditional_averages}
For each $j\in\Lambda$, there exists $b^{\star},c^{\star}\in A$ such that $m_j(b^{\star})\neq m_j(c^{\star})$, where $m_j(\cdot)$ is defined in \eqref{def:Conditional_average_pj_given_b}.
\end{assumption}
Notice that if $A=\{0,1\}$, then 
$m_j(1)-m_j(0)=p_j(1|1)-p_j(1|0)$,
so that Assumption \ref{Ass:non_determinist_conditional_averages} holds whenever $d_{TV}(p_j(\cdot|1),p_j(\cdot|0))=|p_j(1|1)-p_j(1|0)|>0$ for each $j\in\Lambda$. In this case this is always true by the definition of the set $\Lambda$.
As we will see in Section \ref{sec:SLS}, the condition is crucial to prove a  structural result about MTD models, presented in Proposition \ref{Prop:structure_result}.

In what follows, $\bP_{x_{S}}(X_j\in \cdot|X_{k}=b)$ denotes the conditional distribution of $X_j$ given $X_{S}=x_{S}$ and $X_{k}=b$. 
We use the convention that, for $S= \emptyset$, these conditional probabilities  correspond to the unconditional ones.
Moreover, for any function $f:A\to\bR$, we write $\bE_{x_S}(f(X_j)|X_{k}=b)$ to denote the expectation of $f(X_j)$ with respect to $\bP_{x_{S}}(X_j\in \cdot|X_{k}=b)$.  

The next two assumptions are the following. 

\begin{assumption}[Inward weak dependence condition]
\label{ass:lower_bound_kappa}
There exists $\Gamma_1\in (0,1]$ such that the following condition holds: for all $S\subseteq \llb -d, -1\rrb$ such that $\Lambda\not\subseteq S$, $k\in \Lambda\setminus S$ and $b,c\in A$ with $b\neq c$ satisfying $|m_k(b)-m_k(c)|>0,$
\begin{equation}
\label{def:lower_bound_kappa}
\max_{x_S\in A^S}\sum_{j\in \Lambda\setminus S\cup\{k\}}\frac{\lambda_j\left|\bE_{x_S}(m_j(X_{j})|X_{k}=b)-\bE_{x_S}(m_j(X_{j})|X_{k}=c)\right|}{\lambda_k |m_k(b)-m_k(c)|}\leq (1-\Gamma_1).   
\end{equation}
\end{assumption}
\begin{assumption}[Outward weak dependence condition]
\label{ass:incoherence_condition_binary}
The alphabet is binary, i.e. $A=\{0,1\}$. Moreover, there exists $\Gamma_2\in (0,1]$ such that the following condition holds: for all $S\subseteq \llb -d, -1\rrb$ such that $S\subset\Lambda$ and $k\notin \Lambda$, 
\begin{equation}
\label{def:incoherence_condition_binary}
\sum_{j\in \Lambda\setminus S}\max_{x_S\in\{0,1\}^S}\left|\bP_{x_S}(X_{k}=1|X_{j}=1)-\bP_{x_S}(X_{k}=1|X_{j}=0)\right|\leq \Gamma_2.   
\end{equation}
\end{assumption}
Both Assumptions 4 and 5 are conditions of weak dependence. In words, Assumption 4 says that no relevant lag $j$ can be completely determined by any subset $S$ containing only relevant lags or any other relevant lag $k$ when combined with some irrelevant lags. Similarly, Assumption 5 says that irrelevant lags cannot be completely determined by some subset of relevant lags.  These two assumptions will be only necessary to obtain a computationally very efficient algorithm.


\section{Statistical lag selection}
\label{sec:SLS}

In this section, we address the problem of statistical lag selection for the MTD models. We will first introduce a statistical procedure called PCP estimator that is general and works well if there is a known small set $S$ such that $\Lambda \subseteq S$. When such set $S$ is not available, we will have to consider an alternative procedure called FSC estimator, which will be introduced later.


\subsection{Estimator based on pairwise comparisons}
\label{Sec:PCP_estimator}

Throughout this section we suppose 
that there is a known set $S\subseteq \llb -d, -1 \rrb $ such that $\Lambda\subseteq S$.
Note that this is always satisfied  in the worse case scenario in which the set $S$ is the whole set $\llb -d, -1\rrb$. In some cases, however, we may have a prior knowledge on the set $\Lambda$ and we can use this information to restrict our analysis to the lags in a known set $S$ of size (possibly much) smaller than $d$.

The estimator discussed in this section is based on pairwise comparisons of empirical transition probabilities corresponding to compatible pasts. For this reason,  we  call it \texttt{PCP} estimator. The estimator is based on the following observation.
For any $j\in S$, we say that the pasts $x_S,y_S\in A^{S}$ are $(S\setminus \{j\})$-{\it compatible}, if $y_{S\setminus\{j\}}=x_{S\setminus\{j\}}$. We have that if $j\in\Lambda$, then there exist a pair of $(S\setminus\{j\})$-compatible pasts $x_S,y_S\in A^{S}$ such that total variation distance between $p(\cdot|x_{S})$ and $p(\cdot|y_{S})$ is strictly positive. On the other hand, if $j\in S\setminus\Lambda$, then the total variation distance between $p(\cdot|x_{S})$ and $p(\cdot|y_{S})$ is 0 for all $(S\setminus\{j\})$-compatible pasts $x_S,y_S\in A^{S}$.

 These remarks suggests to estimate $\Lambda$ by the subset of all lags $j\in S$ for which 
the total variation distance between $\hat{p}_n(\cdot|x_S)$ and $\hat{p}_n(\cdot|y_S)$ is larger than a suitable positive threshold, for some
pair of $(S\setminus\{j\})$-compatible pasts $x_S$ and $y_S$.
  An uniform threshold over all possible realizations usually gives suboptimal results by either underestimating or overestimating for some configurations. The threshold we use here is adapted to each realization of the MTD, relying on improved martingale concentration inequalities that are of independent interest (see Appendix \ref{Sec:Mart_Conc_Ineq}). 

Fix $\varepsilon>0,$ $\alpha>0$ and 
$\mu\in (0,3)$ such that $\mu>\psi(\mu):=e^{\mu}-\mu-1$. For each $x_S,y_S\in A^{S}$, consider the random threshold $t_{n}(x_S,y_S)$ defined as, 
\begin{equation}
\label{Def:threshold_for_PCP}
t_{n}(x_S,y_S)=s_n(x_S)+s_n(y_S),  
\end{equation}
where $s_n(x_S)$ is given by
\begin{equation}
\label{def:individual_threshold}
s_{n}(x_S)=\sqrt{\frac{\alpha(1+\varepsilon)}{2\bar{N}_{n}(x_S)}}\sum_{a\in A}\sqrt{\frac{\mu}{\mu-\psi(\mu)}\left(\hat{p}_{n}(a|x_S)+\frac{\alpha}{\bar{N}_{n}(x_S)}\right)} + \frac{\alpha |A|}{6\bar{N}_{n}(x_S)}.
\end{equation}

With this notation, the \texttt{PCP} estimator $\hat{\Lambda}_{1,n}$ is defined as follows.
A lag $j\in S$ belongs to $\hat{\Lambda}_{1,n}$ if and only if there exists a $(S\setminus\{j\})$-compatible pair of pasts $x_S,y_S\in A^{S}$ such that
\begin{equation}
\label{Def:PCP_estimator}
 d_{TV}(\hat{p}_n(\cdot|x_S),\hat{p}_n(\cdot|y_S))\geq  t_{n}(x_S,y_S).
\end{equation}

In the sequel, the set $S\subseteq \llb -d, -1 \rrb $ such that $\Lambda\subseteq S$ and the constants $\varepsilon>0,$ $\alpha>0$ and 
$\mu\in (0,3)$ such that $\mu>\psi(\mu)$ are called parameters of the \texttt{PCP} estimator $\hat{\Lambda}_{1,n}$.

Hereafter, for each $j\in S$ and any $b,c\in A,$ let 
$$\cC_{j}(b,c)=\left\{(x,y)\in A^{S}\times A^{S}: x_{S\setminus\{j\}}=y_{S\setminus\{j\}}, \ x_{j}=b \ \text{and} \ y_{j}=c  \right\},$$ 
and define
\begin{equation}
\label{Def:thresholds_for_PCP_esti}
t_{n,j}(b,c)=\min_{(x_S,y_S)\in \cC_j(b,c)} t_{n}\left(x_S,y_S\right), \  t_{n,j}=\max_{b,c\in A:b\neq c}t_{n,j}(b,c) \ \text{and} \  
\gamma_{n,j}=2t_{n,j}.     
\end{equation}
Finally, consider the following quantity
\begin{equation}
\label{Def:min_min_max_prop_for_PCP_esti}
{\bf P}_S=\min_{j\in\Lambda}\min_{b,c\in A:b\neq c}\max_{(x_S,y_S)\in \cC_j(b,c)}\left({\bf P}(x_S)\wedge {\bf P}(y_S)\right). 
\end{equation}

With these definitions, we have the following result.

\begin{theorem}
\label{thm:consistency_PCP_estimator}
Let $X_{1:n}$  be a sample of MTD model with set of relevant lags $\Lambda$, where $n>d$. If $\hat{\Lambda}_{1,n}$ is the \texttt{PCP} estimator defined in \eqref{Def:PCP_estimator}
with parameters $\mu\in(0,3)$ such that $\mu>\psi(\mu)$, $\alpha>0$, $\varepsilon>0$ and $\Lambda \subseteq S\subseteq \llb -d, -1 \rrb $, we have that
\begin{enumerate}
    \item For each $j\in S\setminus\Lambda$, we have that
    $$
    \bP\left(j\in \hat{\Lambda}_{1,n}\right)\leq 8|A|(n-d)\left\lceil \frac{\log(\mu(n-d)/\alpha+2)}{\log(1+\varepsilon)}\right\rceil e^{-\alpha}.
    $$
    \item For each $j\in \Lambda$, we have that
    \begin{equation*}
    \bP\left(j\notin \hat{\Lambda}_{1,n}, \gamma_{n,j}\leq \delta_j \right) \leq 8|A|\left\lceil \frac{\log(\mu(n-d)/\alpha+2)}{\log(1+\varepsilon)}\right\rceil e^{-\alpha},
     \end{equation*}
    where $\gamma_{n,j}$ and and $\delta_j$ are defined in \eqref{Def:thresholds_for_PCP_esti} and \eqref{def:index_weights} respectively.
    \item Furthermore, if assumptions \ref{Ass:MTD_stationary} and \ref{Ass:Concentration_inequalities} hold, then there exits a constant $C=C(\varepsilon,\mu)>0$ such that for $n$ satisfying
    \begin{equation}
    \label{def:min_sample}
        n\geq d+\frac{C|A|\alpha}{\delta^2_{min}P_S},
\end{equation}
where $\delta_{min}$ and ${\bf P}_S$ are defined in respectively \eqref{def:delta_min_and_} and \eqref{Def:min_min_max_prop_for_PCP_esti}, we have that
\begin{multline}
\label{PCP_estimator_error_probability}
    \bP\left(\hat{\Lambda}_{1,n}\neq \Lambda\right)\leq 8|A|\left((|S|-|\Lambda|)(n-d)+|\Lambda|\right)\left\lceil \frac{\log(\mu(n-d)/\alpha+2)}{\log(1+\varepsilon)}\right\rceil e^{-\alpha}
        \\
        +6|A|(|A|-1)|\Lambda|\exp\left\{-\frac{\Delta^2(n-d)^2{\bf P}^2_S}{8n(|S|+1)^2}\right\}.
\end{multline}
    \end{enumerate}
\end{theorem}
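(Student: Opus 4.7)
The proof splits into the three items of the theorem. The shared tool is the empirical--Bernstein martingale inequality announced for Appendix \ref{Sec:Mart_Conc_Ineq}, which one should view as a high-probability bound $d_{TV}(\hat p_n(\cdot|x_S), p(\cdot|x_S)) < s_n(x_S)$ valid uniformly in the past $x_S$ with failure probability at most $\simeq |A|\lceil \log(\mu(n-d)/\alpha+2)/\log(1+\varepsilon)\rceil e^{-\alpha}$; the peeling factor $\lceil \log/\log(1+\varepsilon)\rceil$ arises from stratifying the random count $\bar N_n(x_S)$ over a geometric grid, so the inequality does not inflate by an extra factor of $n-d$ when the past is random. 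The structural identity $p(a|x_S)-p(a|y_S)=\sum_{k\in\Lambda\cap S}\lambda_k(p_k(a|x_k)-p_k(a|y_k))$, which follows from the MTD representation \eqref{def:transition_probabilities} together with the hypothesis $\Lambda\subseteq S$, will be combined repeatedly with triangle inequalities.

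For Part 1, fix $j\in S\setminus\Lambda$. The structural identity gives $p(\cdot|x_S)=p(\cdot|y_S)$ for every $(S\setminus\{j\})$-compatible pair, because $x_\Lambda=y_\Lambda$. Triangle inequality then reduces the event $\{j\in\hat\Lambda_{1,n}\}$ to the existence of an observed past $u_S$ with $d_{TV}(\hat p_n(\cdot|u_S),p(\cdot|u_S))\geq s_n(u_S)$; unobserved pasts contribute nothing because $s_n(u_S)=+\infty$ when $\bar N_n(u_S)=0$. Since at most $n-d$ distinct pasts appear in $X_{1:n}$, a union bound combined with the martingale inequality produces the stated factor $8|A|(n-d)\lceil\cdots\rceil e^{-\alpha}$.

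For Part 2, fix $j\in\Lambda$ and pick $(b^\star,c^\star)$ attaining $\delta_j=\lambda_jd_{TV}(p_j(\cdot|b^\star),p_j(\cdot|c^\star))$. The structural identity then yields $d_{TV}(p(\cdot|x_S),p(\cdot|y_S))=\delta_j$ for \emph{every} pair $(x_S,y_S)\in\cC_j(b^\star,c^\star)$. Let $(\check x,\check y)$ be the (data-dependent) minimiser of $t_n(\cdot,\cdot)$ over $\cC_j(b^\star,c^\star)$, so $t_n(\check x,\check y)=t_{n,j}(b^\star,c^\star)\leq t_{n,j}$. On $\{j\notin\hat\Lambda_{1,n}\}$ the condition \eqref{Def:PCP_estimator} fails in particular at this specific pair, hence $d_{TV}(\hat p_n(\cdot|\check x),\hat p_n(\cdot|\check y))<t_n(\check x,\check y)$; if additionally the per-past concentration holds at both $\check x$ and $\check y$, the triangle inequality gives
\begin{equation*}
\delta_j < t_n(\check x,\check y)+s_n(\check x)+s_n(\check y)=2t_n(\check x,\check y)\leq 2t_{n,j}=\gamma_{n,j},
\end{equation*}
contradicting $\gamma_{n,j}\leq\delta_j$. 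Hence the event of Part 2 is contained in the failure event of the martingale inequality, which, being uniform in the past argument, needs no additional $|A|^{|S|-1}$ union over $\cC_j(b^\star,c^\star)$ and already yields $8|A|\lceil\cdots\rceil e^{-\alpha}$.

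For Part 3, the decomposition
\begin{equation*}
\bP(\hat\Lambda_{1,n}\neq\Lambda)\leq\sum_{j\in S\setminus\Lambda}\bP(j\in\hat\Lambda_{1,n})+\sum_{j\in\Lambda}\bigl[\bP(j\notin\hat\Lambda_{1,n},\gamma_{n,j}\leq\delta_j)+\bP(\gamma_{n,j}>\delta_j)\bigr]
\end{equation*}
together with items (i)-(ii) gives the first line of \eqref{PCP_estimator_error_probability}. To handle the remaining term $\bP(\gamma_{n,j}>\delta_j)$, for every $j\in\Lambda$ and every $b\neq c$ select a deterministic pair $(x^\star,y^\star)\in\cC_j(b,c)$ attaining the $\max(\cdot\wedge\cdot)$ in the definition of ${\bf P}_S$, so ${\bf P}(x^\star)\wedge{\bf P}(y^\star)\geq{\bf P}_S$. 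Under Assumptions \ref{Ass:MTD_stationary}-\ref{Ass:Concentration_inequalities}, a Freedman-type martingale concentration for the dependent indicator process $(\mathbf{1}\{X_{t+S}=x^\star\})_t$, with conditional variance controlled via the gap $\Delta$, gives $\bar N_n(x^\star)\wedge\bar N_n(y^\star)\gtrsim(n-d){\bf P}_S$ off an event of probability $\leq 6|A|(|A|-1)|\Lambda|\exp\{-\Delta^2(n-d)^2{\bf P}_S^2/(8n(|S|+1)^2)\}$, the combinatorial factor absorbing the $\binom{|A|}{2}|\Lambda|$ triples $(j,b,c)$ together with a two-sided deviation. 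On this count-concentration event one has $t_n(x^\star,y^\star)\lesssim\sqrt{\alpha|A|/((n-d){\bf P}_S)}$, which is $\leq\delta_j/2$ as soon as \eqref{def:min_sample} holds with $C=C(\varepsilon,\mu)$ large enough; this forces $t_{n,j}(b,c)\leq\delta_j/2$ for every $(b,c)$, hence $\gamma_{n,j}\leq\delta_j$. The main obstacle is the interaction between the random minimiser $(\check x,\check y)$ in Part 2 and the data-adaptive threshold $s_n$; this is only resolvable by the uniform-in-past formulation of the martingale inequality. The secondary difficulty is the Freedman step for the dependent counts, where the gap $\Delta>0$ is essential for producing the displayed exponent.
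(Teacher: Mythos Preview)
Your proposal tracks the paper's proof essentially step for step: Item 1 via the per-past empirical-Bernstein bound (Proposition \ref{prop:bernstein_empirical_trans_prob_correct_var}) together with the counting identity $\sum_{x}\bP(\bar N_n(x)>0)\le n-d$; Item 2 via the data-dependent minimiser over $\cC_j(b^\star,c^\star)$ plus the triangle inequality; Item 3 via the overestimation/underestimation decomposition and a Gaussian-type concentration (Proposition \ref{prop:gaussian_concentration_inequality}) for $\bar N_n$ at a deterministic pair achieving $\mathbf P_S$. One caveat worth flagging: the paper's martingale inequality is stated per past, not uniformly in the past, and in Item 2 the paper---exactly as you do---applies it at the random minimiser $(x^\star,y^\star)$ without an explicit union bound over $\cC_j(b^\star,c^\star)$; so your appeal to a ``uniform-in-past formulation'' is precisely the same step the paper makes implicitly rather than an additional tool you would find in Appendix \ref{Sec:Mart_Conc_Ineq}.
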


The proof of Theorem \ref{thm:consistency_PCP_estimator} is given in Appendix \ref{Sec:proof_of_thm_cons_PCP_estimator}.

\begin{remark}
\label{rmk:PCP_consistency}
\begin{enumerate}
    \item[(a)] The sum over $j\in S\setminus\Lambda$ of the upper bound provided by Item 1 of Theorem \ref{thm:consistency_PCP_estimator} controls the probability that the \texttt{PCP} estimator $\hat{\Lambda}_{1,n}$ overestimates the set of relevant lags $\Lambda$.
 The sum over $j\in \Lambda$ of the upper bound given in Item 2 of Theorem \ref{thm:consistency_PCP_estimator} is as an upper bound for the probability that the \texttt{PCP} estimator underestimates the subset of relevant lags  $j\in\Lambda$ whose oscillation $\delta_j$ is larger or equal than the ``noise level'' $\gamma_{n,j}$.  
Note that the sum of these upper bounds corresponds to the first term appearing on the right hand side of \eqref{PCP_estimator_error_probability}. 
\item[(b)] The second term on the right hand side of \eqref{PCP_estimator_error_probability} is an upper bound for the probability that there exists some relevant lag $j\in \Lambda$ whose oscillation $\delta_j$ is strictly smaller than the ``noise level'' $\gamma_{n,j}$.
 \item[(c)] (Computation of PCP estimator) 
As we show in Appendix \eqref{computation_PCP_and_FSC_estimator}, the PCP estimator can be implemented with at most $O(|A|^2|S|(n-d))$ computations.
\end{enumerate}
\end{remark}

\begin{remark}
\label{rmk:PCP_exp}
By Assumption \ref{Ass:MTD_stationary}, we have that ${\bf P}_S\geq p_{min}/|A|^{|S|-1}$, where $p_{min}$ is defined in \eqref{def:pstar}. As a consequence, it follows from \eqref{def:min_sample} that if the sample size $n$ is such that
\begin{equation}
\label{def_alternative:min_sample}
n\geq d+\frac{C|A|^{|S|}\alpha}{p_{min}\delta^2_{min}},
\end{equation}
then inequality \eqref{PCP_estimator_error_probability} holds with the second exponential term replaced by
\begin{equation}
\label{PCP_estimator_alternative_error_probability}
\exp\left\{-\frac{\Delta^2p_{min}^2(n-d)^2}{8n(|S|+1)^2|A|^{2(|S|-1)}}\right\}.
\end{equation}

\end{remark}

Combining Theorem \ref{thm:consistency_PCP_estimator} and Remark \ref{rmk:PCP_exp}, one can deduce the following result.

\begin{corollary}
\label{cor:consistency_PCP_estimator}
For each $n$, consider a MTD model with set of relevant lags $\Lambda_{n}$ and transition probabilities $p_n(a|x_{\Lambda_n})$ such that $p_{min,n}\geq p^{\star}_{min}$ and $\Delta_n\geq \Delta^{\star}_{min}$ for some positive constants  $p^{\star}_{min}$ and $\Delta^{\star}_{min}$.
Let $d_n=\beta n$ for some $\beta\in (0,1)$ and suppose that $\Lambda_n\subseteq S_n\subseteq \llb -d_n, -1 \rrb $ with $|S_n|\leq ((1-\gamma)/2)\log_{|A|}(n)$ for some $\gamma\in (0,1)$. Let $X_{1:n}$ be a sample from the MTD specified by $\Lambda_n$ and $p_n(a|x_{\Lambda_n}),$
and denote $\hat{\Lambda}_{1,n}$ the \texttt{PCP} estimator defined in \eqref{Def:PCP_estimator} computed from this sample 
with parameters $\mu_n=\mu\in(0,3)$ such that $\mu>\psi(\mu)$, $\varepsilon_n=\varepsilon>0$, $\alpha_n=(1+\eta)\log(n)$ with $\eta>0$ and $S_n$. Under these assumptions there exists a constant $C=C(\mu,\varepsilon,\beta,p^{\star}_{min},\Delta^{\star}_{min}, \gamma,\eta)>0$ such that if
\begin{equation}
 \label{sub_optimal_condition}   
\delta^2_{min,n}\geq \frac{C\log(n)}{n^{(1+\gamma)/2}},
\end{equation}
then $\bP(\hat{\Lambda}_{1,n}\neq \Lambda_n)\to 0$ as $n\to\infty.$
\end{corollary}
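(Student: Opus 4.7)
The plan is to invoke Theorem~\ref{thm:consistency_PCP_estimator} together with Remark~\ref{rmk:PCP_exp}, and then verify that under the stated scaling both terms in the resulting error bound vanish. All constants appearing in the computation below depend only on the fixed parameters $\mu,\varepsilon,\beta,p^{\star}_{min},\Delta^{\star}_{min},\gamma,\eta$.

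First I would check that the sample-size hypothesis \eqref{def_alternative:min_sample} of Remark~\ref{rmk:PCP_exp} holds for $n$ large. The hypothesis $|S_n|\leq \tfrac{1-\gamma}{2}\log_{|A|} n$ gives $|A|^{|S_n|}\leq n^{(1-\gamma)/2}$. Combining this with $d_n=\beta n$, $\alpha_n=(1+\eta)\log n$ and $p_{min,n}\geq p^{\star}_{min}$, condition \eqref{def_alternative:min_sample} reduces to
\[
(1-\beta)\,n \;\geq\; \frac{C\,(1+\eta)\,n^{(1-\gamma)/2}\log n}{p^{\star}_{min}\,\delta^{2}_{min,n}},
\]
which, after rearranging, is equivalent to $\delta^{2}_{min,n}\geq C'\log(n)/n^{(1+\gamma)/2}$ for a suitable $C'$. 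Taking the constant $C$ in hypothesis \eqref{sub_optimal_condition} large enough, this is implied by the standing assumption of the corollary for all $n$ sufficiently large, so inequality \eqref{PCP_estimator_error_probability} applies with its second term replaced by \eqref{PCP_estimator_alternative_error_probability}.

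Next, I would control the two terms separately. For the first term, using $|\Lambda_n|\leq |S_n|=O(\log n)$, $n-d_n=(1-\beta)n$ and $e^{-\alpha_n}=n^{-(1+\eta)}$, the factor $(|S_n|-|\Lambda_n|)(n-d_n)+|\Lambda_n|$ is $O(n\log n)$ and the ceiling is $O(\log n)$, so the whole term is $O(\log^{2}(n)/n^{\eta})=o(1)$. For the second term, invoking \eqref{PCP_estimator_alternative_error_probability}, the exponent is at least
\[
\frac{(\Delta^{\star}_{min})^{2}(p^{\star}_{min})^{2}(1-\beta)^{2}\,n^{2}}{8\,n\,(|S_n|+1)^{2}\,|A|^{2(|S_n|-1)}} \;\geq\; c\,\frac{n^{\gamma}}{\log^{2} n},
\]
since $|A|^{2(|S_n|-1)}\leq n^{1-\gamma}/|A|^{2}$ and $(|S_n|+1)^{2}=O(\log^{2} n)$. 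Thus this term decays super-polynomially. Summing both contributions yields $\bP(\hat{\Lambda}_{1,n}\neq\Lambda_n)\to 0$ as $n\to\infty$.

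There is no genuine obstacle once Theorem~\ref{thm:consistency_PCP_estimator} and Remark~\ref{rmk:PCP_exp} are in hand: the argument is a careful calibration of exponents. The hypothesis $|S_n|\leq \tfrac{1-\gamma}{2}\log_{|A|} n$ is tuned precisely so that $|A|^{2|S_n|}$ is dominated by $n^{1-\gamma}$ in the second-term exponent, while keeping the $|A|^{|S_n|}$ factor in \eqref{def_alternative:min_sample} compatible with the lower bound \eqref{sub_optimal_condition} on $\delta^{2}_{min,n}$. The choice $\alpha_n=(1+\eta)\log n$ then provides the polynomial decay $n^{-(1+\eta)}$ needed to beat the union-bound factor $O(n\log^{2} n)$ in the first term.
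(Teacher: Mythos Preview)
Your proposal is correct and follows the same route as the paper, which simply notes that Assumptions~\ref{Ass:MTD_stationary} and~\ref{Ass:Concentration_inequalities} hold (because $p_{min,n}\geq p^{\star}_{min}$ and $\Delta_n\geq \Delta^{\star}_{min}$) and then invokes Theorem~\ref{thm:consistency_PCP_estimator}, Item~3, together with Remark~\ref{rmk:PCP_exp}. Your write-up spells out the exponent calibration that the paper leaves implicit, but the underlying argument is identical.
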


The proof of Corollary is given in Appendix \ref{proof:cor_conist_PCP_Esti}.

\begin{remark}
\label{rmk:cor_PCP_consistency}
\begin{enumerate}
\item[(a)] Under the assumptions of Corollary \ref{cor:consistency_PCP_estimator}, if additionally we have $|\Lambda_n|\leq L$ for all values of $n$ for some positive integer $L$, then one can choose a suitable sequence $\gamma_n\to 1$ as $n\to \infty$ to obtain that $\bP(\hat{\Lambda}_{1,n}\neq \Lambda_n)$ vanishes as $n\to\infty$, as long as
\begin{equation}
\label{optimal_condition}
\delta^2_{min,n}\geq \frac{C\log(n)}{n},    
\end{equation}
where the constant $C$ here is larger than the one  given in \eqref{sub_optimal_condition}.
\item[(b)] Observe that in Corollary \ref{cor:consistency_PCP_estimator}, the set of relevant lags can be either finite or grow very slowly with respect to the sample size $n$. On the other hand, no assumption on the orders $d_n$ of the underlying sequence of MTD models is made. In particular, we could consider MTD models with very large orders, for example $ d_n=\beta n$ with $\beta\in (0,1)$. 
    \end{enumerate}
\end{remark}

As Corollary \ref{cor:consistency_PCP_estimator} indicates, in the setting $d_n=\beta n$, the major drawback of the \texttt{PCP} estimator $\hat{\Lambda}_{1,n}$ is that it requires a prior knowledge of $\Lambda_n$ in the form of a set $S_n$ growing slowly enough and such that $\Lambda_n\subseteq S_n$. The main goal of the next two sections is to propose alternative estimators of $\Lambda_n$ to deal with this issue.


\subsection{Forward Stepwise and Cut estimator}
\label{Sec:FSC_estimator}

In this section  
we introduce a second estimator of the set of relevant lags $\Lambda$, called {\it Forward Stepwise and Cut} (\texttt{FSC}) estimator.  
This estimator is based on a structural result about MTD models presented in Proposition \ref{Prop:structure_result} below. Before presenting this structural result, we need to introduce some notation.

In what follows, for each lag $k\in \llb -d, -1\rrb$, subset $S\subseteq \llb -d, -1\rrb\setminus\{k\}$, configuration $x_{S}\in A^{S}$  and symbols $b,c\in A$, let us denote
\begin{equation}
\label{def:measure_influence}
d_{k,S}(b,c,x_{S})=d_{TV}\left(\bP_{x_{S}}(X_0\in \cdot|X_{k}=b), \bP_{x_{S}}(X_0\in \cdot|X_{k}=c)\right),
\end{equation}
and
\begin{equation}
\label{def:weights}
w_{k,S}(b,c,x_{S})=\bP_{x_{S}}(X_{k}=b)\bP_{x_{S}}(X_{k}=c).
\end{equation}
Recall that $\bP_{x_{S}}(X_0\in \cdot|X_{k}=b)$ and $\bP_{x_{S}}(X_{k}=b)$ denote, respectively, the conditional distribution of $X_0$ given $X_{S}=x_{S}$ and $X_{k}=b$ and the conditional probability of $X_k=b$ given $X_{S}=x_S$, with the convention that these conditional probabilities for $S= \emptyset$ correspond to the unconditional ones. 

Let us also denote for each lag $k\in \llb -d, -1\rrb$ and subset $S\subseteq \llb -d, -1\rrb\setminus\{k\}$,
\begin{equation}
\nu_{k,S}(x_{S})=\sum_{b\in A}\sum_{c\in A}w_{k,S}(b,c,x_{S})d_{k,S}(b,c,x_{S}),    
\end{equation}
and
\begin{equation}
\bar{\nu}_{k,S}=\bE\left(\nu_{k,S}(X_{S})\right).
\end{equation}

The quantity $\nu_{k,S}(x_{S})$ measures the influence of $X_{k}$ on $X_{0}$, conditionally on the variables $X_{S}=x_{S}$. The average conditional influence of $X_{k}$ on $X_{0}$ is measured through the quantity $\bar{\nu}_{k,S}$.

In the sequel, we write $\mCov_{x_{S}}(X_{0},X_{k})$ to denote the conditional covariance between the random variables $X_{0}$ and $X_{k}$ given that $X_{S}=x_{S}$. Here, we also use the convention that the conditional covariance for $S= \emptyset$ corresponds to the unconditional one. 
With this notation, we can prove the following structural result about MTD models.

\begin{proposition}
\label{Prop:structure_result}
For any lag $k\in \llb -d, -1\rrb$ and subset $S\subseteq \llb -d, -1\rrb\setminus\{k\}$,
    \begin{equation}
    \label{lower_bound_bar_nu}
        Diam(A)\|A\|_{\infty}\bar{\nu}_{k,S}\geq  \bE\left(\left|\mCov_{X_{S}}(X_0,X_{k})\right|\right).
    \end{equation}
Moreover, if Assumptions \ref{Ass:MTD_stationary} and \ref{Ass:non_determinist_conditional_averages} hold, then there exists a constant $\kappa>0$ such that the following property holds: for any $S\subseteq \llb -d, -1\rrb$ such that  $\Lambda\not\subseteq S$ there exists $k\in \Lambda\setminus S$ satisfying
\begin{equation}
\label{Key_covariance_inequality}
\bE\left(\left|\mCov_{X_{S}}(X_0,X_{k})\right|\right)\geq \kappa.
\end{equation}
Furthermore, if Assumption \ref{Ass:non_determinist_conditional_averages} is replaced by Assumption \ref{ass:lower_bound_kappa}, then the constant $\kappa$ satisfies
\begin{equation}
\label{lower_bound_on_kappa}
 \kappa\geq  \frac{p^2_{min}\Gamma_1 \min\{|b-c|^2:b\neq c\}\tilde{\delta}_{min}}{2\sqrt{|\Lambda|}},
\end{equation}
where $\tilde{\delta}_{min}$, $p_{min}$ and $\Gamma_1$ are defined respectively in \eqref{def:delta_min_and_}, \eqref{def:pstar} and \eqref{def:lower_bound_kappa}. 
\end{proposition}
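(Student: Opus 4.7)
The proposition has three parts, and I would handle them in order.

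For Part~1 I would start from the symmetric form
\[
\mCov_{x_S}(X_0,X_k) = \tfrac12\sum_{b,c\in A}\bP_{x_S}(X_k=b)\,\bP_{x_S}(X_k=c)(b-c)\bigl(\bE_{x_S}(X_0|X_k=b)-\bE_{x_S}(X_0|X_k=c)\bigr),
\]
use $|b-c|\le Diam(A)$, and bound $|\bE_{x_S}(X_0|X_k=b)-\bE_{x_S}(X_0|X_k=c)|\le 2\|A\|_\infty d_{k,S}(b,c,x_S)$ via the identity $\bE_{x_S}(X_0|X_k=b)-\bE_{x_S}(X_0|X_k=c)=\sum_a a\,[\bP_{x_S}(X_0=a|X_k=b)-\bP_{x_S}(X_0=a|X_k=c)]$ together with the definition of total variation. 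Substituting these two bounds, reading off $w_{k,S}(b,c,x_S)$ and $\nu_{k,S}(x_S)$, and then taking expectation in $X_S$ yields \eqref{lower_bound_bar_nu}.

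For Part~3 I use the MTD decomposition of the conditional mean, noting that $\lambda_j\|m_j\|_{\mathrm{Lip}}=0$ forces $m_j$ to be constant on $A$ for $j\notin\Lambda$. This produces
\[
\bE_{x_S}(X_0|X_k=b)-\bE_{x_S}(X_0|X_k=c)=\lambda_k\bigl(m_k(b)-m_k(c)\bigr)+\sum_{j\in\Lambda\setminus(S\cup\{k\})}\lambda_j\bigl[\bE_{x_S}(m_j(X_j)|X_k=b)-\bE_{x_S}(m_j(X_j)|X_k=c)\bigr].
\]
Assumption~\ref{ass:lower_bound_kappa} bounds the second sum by $(1-\Gamma_1)\lambda_k|m_k(b)-m_k(c)|$ whenever $m_k(b)\ne m_k(c)$, so a reverse-triangle inequality gives $|\bE_{x_S}(X_0|X_k=b)-\bE_{x_S}(X_0|X_k=c)|\ge\Gamma_1\lambda_k|m_k(b)-m_k(c)|$ with sign matching $m_k(b)-m_k(c)$. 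For each $k\in\Lambda\setminus S$ I then pick the pair $(b^\ast_k,c^\ast_k)$ attaining $\|m_k\|_{\mathrm{Lip}}$, so that $\lambda_k|m_k(b^\ast_k)-m_k(c^\ast_k)|\ge\tilde\delta_{min}\min_{b\ne c}|b-c|$, and isolate its contribution in the symmetric representation of $\mCov_{x_S}(X_0,X_k)$: this contribution carries a definite sign and, using the full-support bound $\bP_{x_S}(X_k=b^\ast_k)\bP_{x_S}(X_k=c^\ast_k)\ge p_{min}^2$, has pointwise magnitude of order $p_{min}^2\Gamma_1\tilde\delta_{min}\min|b-c|^2$; the remaining cross-pair contributions are dominated using the $(1-\Gamma_1)$ budget from Assumption~\ref{ass:lower_bound_kappa}. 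Taking expectation in $X_S$ and then extracting the $1/\sqrt{|\Lambda|}$ factor via the elementary inequality $\max_{k\in\Lambda\setminus S}|v_k|\ge|\Lambda\setminus S|^{-1/2}\|v\|_2$ applied after squaring the per-$k$ bounds yields \eqref{lower_bound_on_kappa}.

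For Part~2 under the weaker Assumption~\ref{Ass:non_determinist_conditional_averages}, the same decomposition yields only qualitative existence of $\kappa$, through a finiteness/contradiction argument: there are only finitely many $S\subseteq\llb -d,-1\rrb$ with $\Lambda\not\subseteq S$, so if no positive $\kappa$ sufficed, then for some such $S$ every $k\in\Lambda\setminus S$ would satisfy $\mCov_{x_S}(X_0,X_k)=0$ identically in $x_S$ (by full support), forcing $M(x_S)\boldsymbol\lambda=0$ with $M(x_S)_{jk}=\mCov_{x_S}(m_j(X_j),X_k)$ and $\boldsymbol\lambda=(\lambda_j)_{j\in\Lambda\setminus S}$ strictly positive; a short argument using stationarity and the non-constancy of each $m_j$ guaranteed by Assumption~\ref{Ass:non_determinist_conditional_averages} shows that $M(x_S)$ cannot be singular for every $x_S$, completing the contradiction. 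The principal technical difficulty is the sign control in the distinguished-pair step of Part~3: with non-monotone $m_k$, the symmetric double-sum representation of $\mCov_{x_S}(X_0,X_k)$ carries cross-pair cancellations, and Assumption~\ref{ass:lower_bound_kappa} is precisely the structural hypothesis that uniformly dominates these cancellations and legitimizes isolating a single pair.
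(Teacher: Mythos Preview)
Your Part~1 is essentially the paper's argument and is fine.

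The real difficulty is in Parts~2 and~3, where you work directly with $\mCov_{x_S}(X_0,X_k)$ and its $(b-c)$-weighted symmetric representation. The paper instead passes through $\mCov_{x_S}(X_0,m_k(X_k))$, and this is not a cosmetic choice.

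In Part~3, your claim that ``the remaining cross-pair contributions are dominated using the $(1-\Gamma_1)$ budget from Assumption~\ref{ass:lower_bound_kappa}'' is the gap. Assumption~\ref{ass:lower_bound_kappa} bounds, for \emph{fixed} $b,c$, the sum over $j\in\Lambda\setminus(S\cup\{k\})$; it says nothing about the sum over pairs $(b,c)$. After your reverse-triangle step you know the sign of $\bE_{x_S}(X_0|X_k=b)-\bE_{x_S}(X_0|X_k=c)$ matches that of $m_k(b)-m_k(c)$, so each summand in your symmetric representation has the sign of $(b-c)(m_k(b)-m_k(c))$, which can be negative when $m_k$ is non-monotone. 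There is nothing in Assumption~\ref{ass:lower_bound_kappa} that prevents these negative summands from overwhelming your distinguished pair. The paper avoids this entirely: it first uses the pointwise bound $|\mCov_{x_S}(X_0,m_k(X_k))|\le\|m_k\|_{Lip}|\mCov_{x_S}(X_0,X_k)|$, then expands $\mCov_{x_S}(X_0,m_k(X_k))=\sum_{j\in\Lambda\setminus S}\lambda_j\mCov_{x_S}(m_j(X_j),m_k(X_k))$ via the MTD identity, and rewrites each term with the weight $(m_k(b)-m_k(c))$ instead of $(b-c)$. After applying Assumption~\ref{ass:lower_bound_kappa} termwise one obtains $\Gamma_1\lambda_k^2(m_k(b)-m_k(c))^2\ge 0$, so there are no sign cancellations at all. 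Summing over $k$ and using Cauchy--Schwarz on $\sum_k\lambda_k\|m_k\|_{Lip}\le\sqrt{|\Lambda|}\bigl(\sum_k\lambda_k^2\|m_k\|_{Lip}^2\bigr)^{1/2}$ is what produces the $1/\sqrt{|\Lambda|}$; your $\|v\|_\infty\ge\|v\|_2/\sqrt{|\Lambda|}$ route would need a lower bound on $\sum_k\bE(|\mCov_{X_S}(X_0,X_k)|)^2$, which you have not established.

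In Part~2, your contradiction hinges on ``$M(x_S)$ cannot be singular for every $x_S$'', where $M(x_S)_{jk}=\mCov_{x_S}(m_j(X_j),X_k)$. This matrix is not symmetric, there is no quadratic-form interpretation, and the singularity claim is neither proved nor obvious. The paper again routes through $m_k(X_k)$: from $\mCov_{x_S}(X_0,m_k(X_k))=\sum_j\lambda_j\mCov_{x_S}(m_j(X_j),m_k(X_k))$ one gets that $\sum_{j,k}\lambda_j\lambda_k\,\bE\bigl(\mCov_{X_S}(m_j(X_j),m_k(X_k))\bigr)$ equals $\mathrm{Var}\bigl(\sum_{j\in\Lambda\setminus S}\lambda_j(m_j(X_j)-\bE_{X_S}m_j(X_j))\bigr)$, and Assumption~\ref{Ass:non_determinist_conditional_averages} together with full support forces this variance to be strictly positive. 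That is the ``short argument'' you are missing, and it requires the symmetric matrix, not yours.
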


The proof of Proposition \ref{Prop:structure_result} is given in \ref{Sec:proof_prop_struc_result}.

\begin{remark}
Denote $f(S)=\max_{k\in S^c}\bar{\nu}_{k,S}$, for each $S\subseteq \llb -d,-1\rrb$. On one hand, we have that $f(S)=0$ for any $S\subseteq \llb -d,-1\rrb$ such that $\Lambda\subseteq S$. This follows immediately from the definition of $\Lambda$. On the other hand, Proposition \ref{Prop:structure_result} assures that $f(S)\geq \kappa/Diam(A)\|A\|_{\infty}>0$ for any $S\subseteq \llb -d,-1\rrb$ such that $\Lambda\not\subseteq S$. Putting together these facts, we deduce that the set of relevant lags can be written as $\Lambda=\arg\min\{|S|:f(S)=0\}.$ This observation motivates the \texttt{FSC} estimator defined below. 
\end{remark}

In what follows, we split the data $X_{1:n}$ into two pieces. The first part is composed of the first $m$ symbols $X_{1:m}$ where $1\leq m<n$, whereas the second part is composed of the $n-m$ last symbols $X_{m+1:n}$. 
In the sequel, we write $\hat{\nu}_{m,k,S}$ to denote the empirical estimate of $\bar{\nu}_{k,S}$ computed from $X_{1:m}$. The formal definition of $\hat{\nu}_{m,k,S}$ involves extra notation and is postponed to Appendix \ref{proofs_of_FSC_selection_procedure}. 

The \texttt{FSC} estimator is built in two steps. The first step is called Forward Stepwise (\texttt{\texttt{FS}}) and the second one is called \texttt{CUT}. In the \texttt{FS} step, we start with $S=\emptyset$ and add iteratively to the set $S$ a lag $j \in \arg\max_{k\in S^c}\hat{\nu}_{m,k,S}$, as long as $|S|<\ell$, where $0\leq \ell \leq d$ is a parameter of the estimator.
We denote $\hat{S}_{m}$ the set obtained at the end of \texttt{FS} step, with the convention that $\hat{S}_{m}=\emptyset$ if the parameter $\ell=0$.
As we will see, if $\ell$ is properly chosen the candidate set $\hat{S}_{m}$ will contain the set of relevant lags $\Lambda$  with high probability. It may, of course, include irrelevant lags $j$ (those with $\delta_j=0$).
In the \texttt{CUT} step, for each $j\in \hat{S}_{m}$, we remove $j$ from $\hat{S}_{m}$ 
unless $d_{TV}(\hat{p}_{m,n}(\cdot|x_{\hat{S}_m}),\hat{p}_{m,n}(\cdot|y_{\hat{S}_m}))\geq t_{m,n}(x_{\hat{S}_m},y_{\hat{S}_m}):=s_{m,n}(x_{\hat{S}_m})+s_{m,n}(y_{\hat{S}_m})$ for some $(\hat{S}_{m}\setminus\{j\})$-compatible pasts $x_{\hat{S}_m},y_{\hat{S}_m}\in A^{\hat{S}_{m}}$, where  $s_{m,n}(x_{\hat{S}_m})$ is given by \eqref{def:individual_threshold} replacing $\bar{N}_n(\cdot)$ and $\hat{p}_{n}(\cdot|\cdot)$ by $\bar{N}_{m,n}(\cdot)$ and $\hat{p}_{m,n}(\cdot|\cdot)$ respectively.
The \texttt{FSC} estimator is defined as the set $\hat{\Lambda}_{2,n}$ of all lags not removed in the \texttt{CUT} step. 
The pseudo-code of the algorithm to compute the \texttt{FSC} estimator is given in Algorithm \ref{def:FSC_algo}. 

\begin{algorithm}[ht]
\label{def:FSC_algo}
\texttt{FS} Step\;
 1. $\hat{S}_{m}=\emptyset$\;
 2. While $|\hat{S}_{m}|<\ell$\;
 3. Compute $j_*=\arg\max_{j\in \hat{S}_{m}^c} \hat{\nu}_{m,j,\hat{S}_{m}}$ and include $j_*$ in $\hat{S}_{m}$\;
 \texttt{CUT} step\;
 6. For each $j\in \hat{S}_{m}$, remove $j$ from $\hat{S}_{m}$ 
unless 
\[
d_{TV}(\hat{p}_{m,n}(\cdot|x_{\hat{S}_{m}}),\hat{p}_{m,n}(\cdot|y_{\hat{S}_{m}}))\geq t_{m,n}(x_{\hat{S}_{m}},y_{\hat{S}_{m}}),
\]
for some $(\hat{S}_{m}\setminus\{j\})$-compatible pasts $x_{\hat{S}_{m}},y_{\hat{S}_{m}}\in A^{\hat{S}_{m}}$
  \;
7. Output $\hat{S}_{m}$\;
 \caption{\texttt{FSC}$(X_1,\ldots, X_n)$}
\end{algorithm}

\begin{remark}
\label{rmk:FSC_estimator}
\begin{enumerate}
     \item[(a)] It is worth mentioning the following alternative algorithm (henceforth called Algorithm 2) to estimate the set of relevant lags $\Lambda$. As Algorithm \ref{def:FSC_algo}, Algorithm 2 has two steps as well. In the first step, we start with $S=\emptyset$ and add iteratively a lag $j\in \arg\max_{k\in S^c}\hat{\nu}_{n,k,S}$ as long as $\hat{\nu}_{n,j,S}>\tau$, where $\tau$ is a parameter of the algorithm and $\hat{\nu}_{n,j,S}$ is the empirical estimate of $\bar{\nu}_{j,S}$ computed from the entire data $X_{1:n}$. Let $\hat{S}_{n}$ denote the set obtained at the end of this step. Next, in the second step, for each $j\in \hat{S}_{n}$, we remove j from $\hat{S}_{n}$ unless $\hat{\nu}_{n,j,\hat{S}_{n}\setminus\{j\}}\geq \tau$. The output of Algorithm 2 is the set of all lags in $\hat{S}_{n}$ which were not removed in the second step. 
     Algorithm 2 can be seen as a version adapted for our setting of the 
     \texttt{LearnNbhd} algorithm, proposed in \cite{Bresler:15},  to estimate the interaction graph underlying an Ising model from i.i.d samples of the model. 
     \item[(b)]  As opposed to Algorithm 2, notice that the data $X_{1:n}$ is split into two parts in Algorithm \ref{def:FSC_algo}. The first $m$ symbols $X_{1:m}$ of the sample are used in the \texttt{FS} step, whereas the last $n-m$ symbols $X_{m+1:n}$ are only used in the \texttt{CUT} step. Despite requiring to split the data into two parts, one nice feature of Algorithm \ref{def:FSC_algo} is that even if a large $\ell$ is chosen the \texttt{CUT} step would remove the non-relevant lags, whereas in Algorithm 2, we have to calibrate $\tau$ carefully to recover the relevant lags. 
     \item[(c)] (Computation of FSC estimator) As we show in the Appendix \ref{computation_PCP_and_FSC_estimator}, 
     we need to perform at most $O(|A|^3\ell(m-d)(d-(\ell-1)/2)+|A|^2(n-m-d)\ell)$  computations to determine the FSC estimator. The first summand in the sum corresponds to the algorithmic complexity of the FS step, whereas that the second summand can be interpreted as the algorithmic complexity of the PCP estimator computed from a sample of size $n-m$ and whose set $S$ has $\ell$ elements (recall item (c) of Remark \ref{rmk:PCP_consistency}).      
\end{enumerate}

\end{remark}

In what follows, for any $\xi>0$ and $0\leq \ell\leq d$, let us define the following event, 
\begin{equation}
\label{def:Good_event}
G_m(\ell,\xi)=\bigcap_{S\in\mathcal{S}_{d,\ell}}\left\{\max_{j\in S^c}|\bar{\nu}_{j,S}-\hat{\nu}_{m,j,S}|\leq \xi\right\},  
\end{equation}
where $\mathcal{S}_{d,\ell}=\{S\subseteq \llb -d, -1\rrb:\ |S|\leq \ell\}.$
In the next result we show that whenever the event $G_m(\ell,\xi)$ holds with
properly chosen parameters $\xi$ and $\ell$, 
the candidate set $\hat{S}_{m}$ constructed in the \text{FS} step with parameter $\ell$ contains $\Lambda$.

\begin{theorem}
\label{Prop:Correctness_1_FSC_Algo}
Suppose  Assumptions \ref{Ass:MTD_stationary} and \ref{Ass:non_determinist_conditional_averages} hold and let $\kappa$ be the lower bound provided by Proposition \ref{Prop:structure_result}.
Let
\begin{equation}
\label{parameters_consistency}
\xi_*=\frac{\kappa}{4\|A\|_{\infty}Diam(A)} \ \text{and} \ \ell_*=\left\lfloor \frac{\log_2(|A|)}{8\xi_*^2}\right\rfloor=\left\lfloor \frac{2(Diam(A)\|A\|_{\infty})^2\log_2(|A|)}{\kappa^2}\right\rfloor.     
\end{equation} 
Let $\hat{S}_{m}$ denote the candidate set constructed in the FS step of Algorithm \ref{def:FSC_algo} with parameter $\ell_*$.
On the event $G_m(\ell_*,\xi_*)$, we have that $\Lambda\subseteq \hat{S}_{m}$.
\end{theorem}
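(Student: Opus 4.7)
The plan is to argue by contradiction. Let $\emptyset = \hat{S}_m^{(0)} \subset \hat{S}_m^{(1)} \subset \cdots \subset \hat{S}_m^{(\ell_*)} = \hat{S}_m$ denote the nested sequence produced by the \texttt{FS} loop, with $j_{i+1} \in \arg\max_{j \notin \hat{S}_m^{(i)}} \hat{\nu}_{m,j,\hat{S}_m^{(i)}}$ the lag added at iteration $i+1$. Assume $\Lambda \not\subseteq \hat{S}_m$; then in particular $\Lambda \not\subseteq \hat{S}_m^{(i)}$ for every $0 \leq i \leq \ell_* - 1$, and I will derive an impossibility by accounting for the entropy of $X_0$.

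At each such step $i$, Proposition \ref{Prop:structure_result} supplies a ``witness'' lag $k_i \in \Lambda \setminus \hat{S}_m^{(i)}$ satisfying $Diam(A)\,\|A\|_\infty\,\bar{\nu}_{k_i, \hat{S}_m^{(i)}} \geq \bE\big[|\mCov_{X_{\hat{S}_m^{(i)}}}(X_0, X_{k_i})|\big] \geq \kappa$, i.e.\ $\bar{\nu}_{k_i, \hat{S}_m^{(i)}} \geq 4\xi_*$. On the event $G_m(\ell_*, \xi_*)$ the concentration bound $|\hat{\nu}_{m,j,S} - \bar{\nu}_{j,S}| \leq \xi_*$ holds simultaneously for all $S \in \mathcal{S}_{d,\ell_*}$ and all $j \notin S$, so applying it first to $k_i$ yields $\hat{\nu}_{m, k_i, \hat{S}_m^{(i)}} \geq 3\xi_*$; exploiting the greedy definition $\hat{\nu}_{m, j_{i+1}, \hat{S}_m^{(i)}} \geq \hat{\nu}_{m, k_i, \hat{S}_m^{(i)}}$ and applying the concentration bound back to $j_{i+1}$ yields $\bar{\nu}_{j_{i+1}, \hat{S}_m^{(i)}} \geq 2\xi_*$ at each of the $\ell_*$ iterations.

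The decisive step is to convert this lower bound on $\bar{\nu}_{j_{i+1}, \hat{S}_m^{(i)}}$ into a lower bound on the conditional mutual information $I(X_0; X_{j_{i+1}} \mid X_{\hat{S}_m^{(i)}})$. Starting from the definition of $\nu_{k,S}(x_S)$, I would bound each $d_{TV}(\bP_{x_S}(X_0|X_k=b), \bP_{x_S}(X_0|X_k=c))$ via the triangle inequality through the pivot $\bP_{x_S}(X_0) = \sum_b \bP_{x_S}(X_k=b)\bP_{x_S}(X_0|X_k=b)$, then apply Cauchy--Schwarz under the product measure $(b,c) \mapsto \bP_{x_S}(X_k=b)\bP_{x_S}(X_k=c)$, and finally Pinsker's inequality to obtain $\nu_{k,S}(x_S)^2 \lesssim I(X_0; X_k \mid X_S = x_S)$; Jensen's inequality on $\bar{\nu}^2$ then gives $\bar{\nu}_{k,S}^2 \lesssim I(X_0; X_k \mid X_S)$. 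Tracking the constants in bits, each of the $\ell_*$ greedy selections contributes at least $8\xi_*^2$ bits of conditional mutual information, and the chain rule combined with $H(X_0) \leq \log_2(|A|)$ gives
\begin{equation*}
8\xi_*^2 \, \ell_* \;\leq\; \sum_{i=0}^{\ell_* - 1} I\!\left(X_0; X_{j_{i+1}} \,\big|\, X_{\hat{S}_m^{(i)}}\right) \;=\; I(X_0; X_{j_1}, \ldots, X_{j_{\ell_*}}) \;\leq\; H(X_0) \;\leq\; \log_2(|A|),
\end{equation*}
which contradicts the definition $\ell_* = \lfloor \log_2(|A|)/(8\xi_*^2) \rfloor$ once we reach the $\ell_*$-th iteration.

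The main technical obstacle is obtaining the Pinsker--Cauchy--Schwarz inequality $\bar{\nu}_{k,S}^2 \lesssim I(X_0; X_k \mid X_S)$ with a constant sharp enough to yield the per-step entropy reduction of exactly $8\xi_*^2$ bits, and tracking bases (bits vs.\ nats) consistently so that the denominator in $\ell_*$ matches. The greedy/concentration bookkeeping of the first two paragraphs and the chain-rule argument of the third are routine once Proposition \ref{Prop:structure_result} is in hand.
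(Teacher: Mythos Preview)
Your overall strategy matches the paper's: contradiction via an entropy budget, with Proposition~\ref{Prop:structure_result} supplying a high-$\bar\nu$ witness at every step and Pinsker converting $\bar\nu$ into conditional mutual information (this is exactly the content of the paper's Lemma~\ref{key_lemma_Bresler}, which shows $\sqrt{\tfrac12 I(X_0;X_j\mid X_S)}\geq \bar\nu_{j,S}$ via Pinsker applied to the joint vs.\ product law and then Jensen). Your bookkeeping $\bar\nu_{k_i}\geq 4\xi_*\Rightarrow \hat\nu_{m,k_i}\geq 3\xi_*\Rightarrow \hat\nu_{m,j_{i+1}}\geq 3\xi_*\Rightarrow \bar\nu_{j_{i+1}}\geq 2\xi_*\Rightarrow I\geq 8\xi_*^2$ is correct.

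However, there is a genuine off-by-one gap in your contradiction. Summing over the $\ell_*$ iterations the algorithm actually performs gives only
\[
8\xi_*^2\,\ell_* \;\leq\; \sum_{i=0}^{\ell_*-1} I\!\left(X_0; X_{j_{i+1}}\mid X_{\hat S_m^{(i)}}\right)\;\leq\;\log_2(|A|),
\]
and this is \emph{not} a contradiction: by definition $\ell_*=\lfloor \log_2(|A|)/(8\xi_*^2)\rfloor$ satisfies $8\xi_*^2\ell_*\leq \log_2(|A|)$. The paper's fix is to run the argument one step further. Under the contradiction hypothesis $\Lambda\not\subseteq \hat S_m=\hat S_m^{(\ell_*)}$, Proposition~\ref{Prop:structure_result} still applies to $S=\hat S_m^{(\ell_*)}$, and since $|\hat S_m^{(\ell_*)}|=\ell_*$ the event $G_m(\ell_*,\xi_*)$ still controls the deviation there. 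Defining a hypothetical $j_{\ell_*+1}\in\arg\max_{j\notin \hat S_m^{(\ell_*)}}\hat\nu_{m,j,\hat S_m^{(\ell_*)}}$ (the algorithm does not compute it, but it is well defined) yields $I(X_0;X_{j_{\ell_*+1}}\mid X_{\hat S_m^{(\ell_*)}})\geq 8\xi_*^2$ as well, and now the chain rule over $\ell_*+1$ terms gives
\[
\log_2(|A|)\;\geq\;(\ell_*+1)\,8\xi_*^2\;>\;\log_2(|A|),
\]
the strict inequality coming from $\ell_*+1>\log_2(|A|)/(8\xi_*^2)$. Without this extra step your argument does not close.
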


The proof of Theorem \ref{Prop:Correctness_1_FSC_Algo} is given in Appendix \ref{sec:proff_correctnes_1_FSC_algo}.
Theorem \ref{Prop:Correctness_1_FSC_Algo} ensures that the candidate set $\hat{S}_{m}$ contains the set of relevant lags $\Lambda$ whenever the event $G_m(\ell_*,\xi_*)$ holds. In this case,  we can think of the \texttt{CUT} step as the \texttt{PCP} estimator discussed in the previous section applied to the $n-m$ last observations $X_{m+1:n}$ of the data, where $S=\hat{S}_{m}$. The main difference is that $\hat{S}_{m}$ is a random set, depending on the first $m$ observations $X_{1:m}$ of the data.

In the sequel, let us denote 
\begin{equation}
\label{Def:min_min_max_prop_for_FSC_esti}
{\bf P}_{S_{d,\ell}}=\min_{S\in\mathcal{S}_{d,\ell}}{\bf P}_S,
\end{equation}
where ${\bf P}_S$ is defined in \eqref{Def:min_min_max_prop_for_PCP_esti}.

In the next result we estimate the error probability of the \texttt{FSC} estimator.

\begin{theorem}
\label{thm:consistency_FSC}
Suppose Assumptions \ref{Ass:MTD_stationary},  \ref{Ass:Concentration_inequalities}, and \ref{Ass:non_determinist_conditional_averages} hold. Let $\Delta>0$ be the quantity defined in Assumption \ref{Ass:Concentration_inequalities}.
Denote $\hat{\Lambda}_{2,n}$ the \texttt{FSC} estimator constructed  by Algorithm \ref{def:FSC_algo} with parameter $\ell_*$, as defined in \eqref{parameters_consistency}.
Suppose also that $m>d\geq 2\ell_{*}$. Then there exits a constant $C=C(\varepsilon,\mu)>0$ such that if
\begin{equation}
\label{def:min_sample_FSC}
n\geq m+d+\frac{C|A|\alpha}{\delta^2_{min}{\bf P}_{S_{d,\ell_*}}}, \end{equation}
where $\delta_{min}$ and ${\bf P}_{S_{d,\ell_*}}$ are defined in \eqref{def:delta_min_and_} and \eqref{Def:min_min_max_prop_for_FSC_esti},
then we have that,
\begin{multline}
\label{prob_error_estimate_FSC}
\bP(\hat{\Lambda}_{2,n}\neq \Lambda)\leq 2|A|(\ell_*+1){d \choose \ell_*}\left[ d|A|^{\ell_*+1}
\exp\left\{-\frac{(\xi_*\Delta)^2(m-d)^2}{18m|A|^{2(\ell_*+2)}(\ell_*+2)^2}\right\}\right. \\ \left.
+3(|A|-1)|\Lambda|\exp\left\{-\frac{\Delta^{2}(n-m-d)^2{\bf P}^2_{S_{d,\ell_*}}}{8(n-m)(\ell_*+1)^2}\right\}\right]\\
+8|A|\left((\ell_*-|\Lambda|)(n-m-d)+|\Lambda|\right)\left\lceil\frac{\log(\mu (n-m-d)/\alpha+2)}{\log(1+\varepsilon)}\right\rceil e^{-\alpha},
\end{multline}
where $\xi_*$ is defined in \eqref{parameters_consistency}.
\end{theorem}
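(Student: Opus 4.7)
The plan is to decompose the error event using the ``good event'' $G := G_m(\ell_*, \xi_*)$ from \eqref{def:Good_event}, which controls the FS step. Writing
\[
\bP\bigl(\hat{\Lambda}_{2,n} \neq \Lambda\bigr) \leq \bP(G^c) + \bP\bigl(\{\hat{\Lambda}_{2,n} \neq \Lambda\} \cap G\bigr),
\]
the first summand will produce the first exponential in \eqref{prob_error_estimate_FSC} (with the $(m-d)^2/m$ rate), while the second summand will produce the remaining two terms by reducing the CUT step to a conditional application of Theorem \ref{thm:consistency_PCP_estimator}.

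To bound $\bP(G^c)$, I use a union bound over $S \in \mathcal{S}_{d,\ell_*}$ (at most $(\ell_*+1)\binom{d}{\ell_*}$ subsets) and over $j \in S^c$ (at most $d$ indices), reducing the task to controlling $\bP(|\hat{\nu}_{m,j,S} - \bar{\nu}_{j,S}| > \xi_*)$ for each fixed $(S,j)$. The quantity $\hat{\nu}_{m,j,S}$ is a sum over the $O(|A|^{\ell_*+2})$ configurations $(x_S, b, c)$ of products of empirical conditional and marginal probabilities computed from $X_{1:m}$, and I decompose it additively by writing the empirical ratios as Lipschitz functions of the underlying counts around their means. The improved martingale Bernstein inequality proved in the appendix is then applied to each count; Assumption \ref{Ass:Concentration_inequalities} supplies the mixing constant $\Delta$ that enters the exponent, while Assumption \ref{Ass:MTD_stationary} supplies the lower bound $\bE[\bar{N}_m(x_S, b)] \gtrsim (m-d)\, p_{min}^{\ell_*+1}/|A|^{\ell_*}$ on the mean count, which is ultimately what produces the $|A|^{2(\ell_*+2)}$ factor in the denominator of the exponent.

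For the second summand, Theorem \ref{Prop:Correctness_1_FSC_Algo} guarantees that on $G$ the random set $\hat{S}_m$ (a function of $X_{1:m}$) satisfies $\Lambda \subseteq \hat{S}_m$ and $|\hat{S}_m| \leq \ell_*$, so the CUT step is precisely the \texttt{PCP} procedure of Section \ref{Sec:PCP_estimator} applied to the disjoint window $X_{m+1:n}$ with parameter set $\hat{S}_m$. Because ${\bf P}_{\hat{S}_m} \geq {\bf P}_{S_{d,\ell_*}}$ and $|\hat{S}_m| \leq \ell_*$, hypothesis \eqref{def:min_sample_FSC} implies \eqref{def:min_sample} for this sub-sample, and Theorem \ref{thm:consistency_PCP_estimator} yields, on each realization of $\hat{S}_m$, a bound of the form \eqref{PCP_estimator_error_probability}. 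The $e^{-\alpha}$ summand of \eqref{PCP_estimator_error_probability} depends on $\hat{S}_m$ only through $|\hat{S}_m|$ and hence integrates over the law of $\hat{S}_m$ (via total probability) to yield the final $e^{-\alpha}$ term of \eqref{prob_error_estimate_FSC} without any combinatorial prefactor; the Markov-count summand, by contrast, is absorbed into the bracketed expression by a union bound over the at most $(\ell_*+1)\binom{d}{\ell_*}$ possible realizations of $\hat{S}_m$. Matching the combinatorial prefactors and using $|\hat{S}_m|-|\Lambda| \leq \ell_*-|\Lambda|$ completes the reduction.

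The main technical obstacle is the concentration bound for the ratio-based functional $\hat{\nu}_{m,j,S}$. A direct Azuma--Hoeffding argument would give a tail like $\exp(-c(m-d)\xi_*^2/|A|^{C\ell_*})$, too weak to compensate the $\binom{d}{\ell_*}$ prefactor in the regime $d = \Theta(n)$. Only the Bernstein-type martingale inequality developed in the appendix, exploiting the variance structure furnished by Assumption \ref{Ass:Concentration_inequalities}, delivers the sharper $(m-d)^2/m$ scaling required; carefully propagating this through the Lipschitz decomposition of the empirical transition probability $\hat{p}_m(a|x_S,b)$ around its population value, while keeping track of the multiplicative $|A|^{\ell_*}$ losses produced by the lower bound on the empirical denominators, is the technical heart of the argument.
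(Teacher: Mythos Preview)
Your overall decomposition and treatment of the \texttt{CUT} step match the paper. In particular, you correctly note that the $e^{-\alpha}$ summand integrates over the law of $\hat S_m$ without picking up the $\binom{d}{\ell_*}$ prefactor, while the second exponential term does; the paper obtains this by writing $\bP(\Lambda\subseteq\hat S_m,\hat\Lambda_{2,n}\neq\Lambda)=\sum_{S:\Lambda\subseteq S,|S|\le\ell_*}\bP(\hat S_m=S,\hat\Lambda_{2,n}\neq\Lambda)$ and re-running the proof of Theorem~\ref{thm:consistency_PCP_estimator} on each summand, exactly as you outline.

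There is, however, a genuine gap in your treatment of $\bP(G^c_m(\ell_*,\xi_*))$. The concentration tool you invoke is wrong. The martingale Bernstein inequality of Appendix~\ref{Sec:Mart_Conc_Ineq} controls deviations of the form $N_{m}(x_S,a)-\sum_t 1\{X_{t+S}=x_S\}p(a|X_{t-d:t-1})$; the centering is the \emph{random} transition probability, not the stationary marginal. Hence it gives concentration of $\hat p_m(a|x_S)$ around $p(a|x_\Lambda)$ (which is what the \texttt{CUT} step needs) but not of the empirical marginals $\hat{\bf P}_m(x_S,b,a)$ around ${\bf P}(x_S,b,a)$, which is what enters $\hat\nu_{m,j,S}$. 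The paper instead uses the McDiarmid-type Gaussian concentration of Proposition~\ref{prop:gaussian_concentration_inequality} (Theorem~3.4 of \citep{CGT_20}): this is where $\Delta$ from Assumption~\ref{Ass:Concentration_inequalities} and the $(m-d)^2/m$ exponent actually come from. Your dichotomy ``Azuma--Hoeffding gives $(m-d)$, Bernstein gives $(m-d)^2/m$'' is a misconception; for $d=\beta m$ these are the same order, and the point of Proposition~\ref{prop:gaussian_concentration_inequality} is not a variance gain but the handling of Markov dependence via $\Delta$.

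A second, smaller inaccuracy: the $|A|^{2(\ell_*+2)}$ in the exponent does not arise from lower-bounding empirical denominators. The paper's Claim~1 in the proof of Proposition~\ref{prop:G_holds_high_proba} eliminates all ratios by showing $|\bar\nu_{j,S}-\hat\nu_{j,S}|\le 3\sum_{x_S,a,b}|\hat{\bf P}(x_S,b,a)-{\bf P}(x_S,b,a)|$, after which the factor comes from splitting $\xi_*$ across the $|A|^{|S|+2}$ summands and applying Proposition~\ref{prop:gaussian_concentration_inequality} to each.
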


The proof of Theorem \ref{thm:consistency_FSC} is given in Appendix \ref{Sec:proof_consistency_FSC}.

\begin{remark}
\label{rmk:consist_FSC}
\begin{enumerate}
\item[(a)]
Let us give some intuition about the three terms appearing on the right-hand side of \eqref{prob_error_estimate_FSC}.
    The first one is an upper bound for $\bP(G^c_m(\ell_*,\xi_*))$. The other two are related to the terms appearing in \eqref{PCP_estimator_error_probability}. Indeed, by recalling that $|\hat{S}_m|=\ell_*$, one immediately sees that the third terms of \eqref{prob_error_estimate_FSC}  corresponds to the first term of \eqref{PCP_estimator_error_probability} with $\hat{S}_m$ and $n-m$ in the place of $S$ and $n$ respectively. Besides, the second term of \eqref{prob_error_estimate_FSC} is similar (modulo a factor which depends on $d$ and $\ell_*$) to the second term of \eqref{PCP_estimator_error_probability}. This extra factor reflects the fact that we do not know a priori a set $S$ containing the set of relevant lags $\Lambda$.
\item[(b)] Similar to Remark \ref{rmk:PCP_exp}, one can also show that ${\bf P}_{S_{d,\ell}}\geq p_{min}/|A|^{\ell-1}$, where $p_{min}$ is defined in \eqref{def:pstar}. Hence, we can deduce from \eqref{def:min_sample_FSC} that when the sample size $n$ satisfies
\begin{equation}
\label{def_alternative:min_sample_FSC}
n\geq d+\frac{C|A|^{\ell_*}\alpha}{p_{min}\delta^2_{min}},
\end{equation}
then inequality \eqref{prob_error_estimate_FSC} holds with the second exponential term replaced by
\begin{equation}
\label{FSC_estimator_alternative_error_probability}
\exp\left\{-\frac{(\Delta p_{min})^2(n-d)^2}{8n(\ell_{*}+1)^2|A|^{2(\ell_*-1)}}\right\}.
\end{equation}
\end{enumerate}    
\end{remark}

The next result is a corollary of Theorem \ref{thm:consistency_FSC}.

\begin{corollary}
\label{cor:consistency_FSC}
For each $n$, consider a MTD model with set of relevant lags $\Lambda_{n}$ and transition probabilities $p_n(a|x_{\Lambda_n})$ satisfying $p_{min,n}\geq p^{\star}_{min}$ $\Delta_n\geq \Delta^{\star}_{min}$ for some positive constants  $p^{\star}_{min}$ and $\Delta^{\star}_{min}$, and such that Assumption \ref{Ass:non_determinist_conditional_averages} holds.
Let $m_n=n/2$ and $d_n=m_n\beta$ with $\beta\in (0,1)$.
Let $X_{1:n}$ be a sample from the MTD specified by $\Lambda_n$ and $p_n(a|x_{\Lambda_n}),$ and denote $\hat{\Lambda}_{2,n}$ the \texttt{FSC} estimator constructed  by Algorithm \ref{def:FSC_algo} with parameters $m_n$, $\mu_n=\mu\in(0,3)$ such that $\mu>\psi(\mu)$, $\varepsilon_n=\varepsilon>0$, $\alpha_n=(1+\eta)\log(n)$ with $\eta>0$ and $\ell_{*,n}$ as defined in \eqref{parameters_consistency}.
Assume that $\ell_{*,n}\leq ((1-\gamma)/2)\log_{|A|}(n)$ for some $\gamma\in (0,1)$.
Then there exists a constant $C=C(\beta,\gamma,\eta,p^{\star}_{min},\Delta^{\star}_{min},\varepsilon,\mu)>0$ such that $\bP(\hat{\Lambda}_{2,n}\neq \Lambda^*)\to 0$ as $n\to \infty$, whenever
\begin{equation}
\label{cond_consis_FSC}
\delta^2_{min,n}\geq \frac{C\log(n)}{n^{(1+\gamma)/2}}.  
\end{equation}
\end{corollary}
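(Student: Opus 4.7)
The plan is to apply Theorem \ref{thm:consistency_FSC} directly and to verify, under the hypotheses of the corollary, that (i) the sample-size requirement \eqref{def:min_sample_FSC} reduces to the condition \eqref{cond_consis_FSC}, and (ii) each of the three summands on the right-hand side of \eqref{prob_error_estimate_FSC} tends to zero as $n\to\infty$.

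First I would extract two a priori estimates from the assumption $\ell_{*,n}\leq \tfrac{1-\gamma}{2}\log_{|A|}(n)$. Inverting the definition of $\ell_*$ in \eqref{parameters_consistency} yields $\kappa_n^2 \geq C_1/\log n$, and hence $\xi_{*,n}^2 \geq C_2/\log n$, where $C_1,C_2$ depend only on $|A|$ and $\gamma$. From Remark \ref{rmk:consist_FSC}(b) together with $p_{min,n}\geq p^{\star}_{min}$ one also obtains ${\bf P}_{S_{d,\ell_{*,n}}} \geq p^{\star}_{min}/|A|^{\ell_{*,n}-1}\geq C_3\, n^{-(1-\gamma)/2}$. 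Using $m_n=n/2$, $d_n=\beta n/2$ and $\alpha_n=(1+\eta)\log n$, substitution of these bounds into the right-hand side of \eqref{def:min_sample_FSC} shows that the sample-size condition is equivalent, up to a multiplicative constant depending only on $\beta,\gamma,\eta,p^{\star}_{min},|A|,\mu,\varepsilon$, to $\delta^2_{min,n}\geq C\log(n)/n^{(1+\gamma)/2}$, which is exactly \eqref{cond_consis_FSC} once $C$ is taken large enough.

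Next I would bound the three summands in \eqref{prob_error_estimate_FSC}. The \emph{third} summand has prefactor of order $\ell_{*,n}\cdot n\cdot \log n=O(n\log^2 n)$ and an $e^{-\alpha_n}=n^{-(1+\eta)}$ factor, giving $O(\log^2(n)/n^{\eta})\to 0$. For the \emph{first} summand (the $\bP(G^c_{m_n}(\ell_{*,n},\xi_{*,n}))$ bound), the prefactor is dominated by $\binom{d_n}{\ell_{*,n}}|A|^{\ell_{*,n}+1}\leq \exp(O(\log^2 n))$, using $\binom{d_n}{\ell_{*,n}}\leq d_n^{\ell_{*,n}}$ with $\ell_{*,n}=O(\log n)$, whereas the exponent is at least
\[
\frac{\xi_{*,n}^2\,\Delta_n^2\,(m_n-d_n)^2}{18\,m_n\,|A|^{2(\ell_{*,n}+2)}(\ell_{*,n}+2)^2} \;\gtrsim\; \frac{n^\gamma}{\log^3 n},
\]
which dominates $\log^2 n$, so this summand vanishes. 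The \emph{second} summand is handled analogously: its prefactor is again $\exp(O(\log^2 n))$ (since $|\Lambda_n|\leq \ell_{*,n}$ is implicit in the statement, as $|\hat{\Lambda}_{2,n}|\leq\ell_{*,n}$ by construction of the \texttt{FS} step), while its exponent is at least of order $n^\gamma/\log^2 n$ thanks to $(n-m_n-d_n)^2/(n-m_n)\gtrsim n$, $\Delta_n\geq \Delta^{\star}_{min}$ and ${\bf P}^2_{S_{d,\ell_{*,n}}}\gtrsim n^{-(1-\gamma)}$. Summing the three vanishing bounds yields $\bP(\hat{\Lambda}_{2,n}\neq \Lambda_n)\to 0$.

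The main obstacle will be verifying that the $\exp(O(\log^2 n))$ prefactors coming from $\binom{d_n}{\ell_{*,n}}$ and $|A|^{O(\ell_{*,n})}$ are beaten by the exponential tails. The choice $\ell_{*,n}=O(\log n)$ is precisely what keeps these prefactors only quasi-polynomial, and the $n^\gamma$ appearing in the exponents (itself traceable to ${\bf P}_{S_{d,\ell_{*,n}}}\gtrsim n^{-(1-\gamma)/2}$ via the $\ell_{*,n}$ bound) is what produces a net super-polynomial decay. Once this balance is made explicit the three bounds combine cleanly to give the asserted consistency.
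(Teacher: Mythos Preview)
Your proposal is correct and follows exactly the approach the paper takes: the paper's own proof simply states that Assumptions \ref{Ass:MTD_stationary} and \ref{Ass:Concentration_inequalities} hold (from $p_{min,n}\geq p^\star_{min}$ and $\Delta_n\geq \Delta^\star_{min}$), and then invokes Theorem \ref{thm:consistency_FSC} together with Remark \ref{rmk:consist_FSC}(b). You have supplied precisely the computations the paper leaves implicit---translating the bound on $\ell_{*,n}$ into the lower bounds $\xi_{*,n}^2\gtrsim 1/\log n$ and ${\bf P}_{S_{d,\ell_{*,n}}}\gtrsim n^{-(1-\gamma)/2}$, and then checking that each summand in \eqref{prob_error_estimate_FSC} vanishes---and your orders of magnitude are correct throughout.
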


The proof of Corollary \ref{cor:consistency_FSC} is given in Appendix \ref{proof:cor_conist_FSC_Esti}.

\begin{remark}
\begin{enumerate}
    \item[(a)] Under Assumption \ref{ass:lower_bound_kappa}, one can check that $\ell_{*,n}\leq ((1-\gamma)/2)\log_{|A|}(n)$ whenever,
    $$
    \frac{\tilde{\delta}^2_{min}}{|\Lambda|}\geq \frac{16(1-\gamma)}{\Gamma_1^2(p^{\star}_{min}\min\{|b-c|:b\neq c\})^4\log_{|A|}(n)}.
    $$
    \item[(b)] Comparing Corollaries \ref{cor:consistency_FSC} and \ref{cor:consistency_PCP_estimator}, we observe that the consistency of both \texttt{FSC} and \texttt{PCP} estimators require the same lower bound on the decay of minimal oscillation $\delta_{min,n}$. Despite requiring additional assumptions (Assumption \ref{Ass:non_determinist_conditional_averages} and a condition on the growth of $\ell_*$), \texttt{FSC} estimator do not need prior knowledge of a small subset S containing the set of relevant lags $\Lambda$ as opposed to the \texttt{PCP} estimator, which is a significant advantage in practice. 
    \item [(c)] Let us mention that under the assumptions of Corollary \ref{cor:consistency_FSC}, we have that the algorithmic complexity of the FSC is $O(|A|^3 n^2\log_{|A|}(n))$. This follows immediately from item (c) of Remark \ref{rmk:FSC_estimator}.         
\end{enumerate}
\end{remark}

\subsection{Improving the efficiency for the binary case}
\label{sec:supereficient_bin_MTD}

In this section, we show that when the alphabet is binary, i.e., $A=\{0,1\}$, we can further improve the FSC algorithm if we consider Assumptions \ref{ass:lower_bound_kappa} and \ref{ass:incoherence_condition_binary}. Observe that when the alphabet is binary,  Assumption \ref{Ass:non_determinist_conditional_averages} holds automatically (see Section \ref{sec:assump}). 
Moreover, we have that 
\begin{align*}
\bar{\nu}_{k,S}&=2\bE\left(\bP_{X_S}(X_k=1)\bP_{X_S}(X_k=1)
\left|\bP_{X_S}(X_0=1|X_k=1)-\bP_{X_S}(X_0=1|X_k=0)\right|\right)\\
&=2\bE\left(\left|\mCov_{X_{S}}(X_0,X_{k})\right|\right),
\end{align*}
for any lag $k\in \llb -d, -1\rrb$, subset $S\subseteq \llb -d, -1\rrb\setminus\{k\}$ and configuration $x_{S}\in\{0,1\}^{S}$.

For a binary MTD, we have the following result.

\begin{theorem}
\label{thm:FSC_consistency_binary}
Under Assumptions \ref{ass:lower_bound_kappa} and \ref{ass:incoherence_condition_binary}, it holds that
\begin{equation*}
\min_{S\subset\Lambda}\left(\max_{j\in \Lambda\setminus S}\bar{\nu}_{j,S}-\max_{j\in (\Lambda)^c}\bar{\nu}_{j,S}\right)\geq 2(\Gamma_1-\Gamma_2)p^2_{min} \delta_{min},    
\end{equation*}
where $\delta_{min}$ and $p_{min}$ are defined in \eqref{def:delta_min_and_} and \eqref{def:pstar} respectively. 
In particular, if $\Gamma_1>\Gamma_2$ and
$\hat{S}_m$ denotes the candidate set constructed at the end of the FS step of Algorithm \ref{def:FSC_algo} with parameter $\ell\geq |\Lambda|$, then $\Lambda\subseteq\hat{S}_m$ whenever the event $G_m(\ell,\xi)$ holds where
\begin{equation}
\label{choice_xi_consis_with_FS_step_only}
0<\xi<(\Gamma_1-\Gamma_2)p^{2}_{min} \delta_{min}.    
\end{equation}

\end{theorem}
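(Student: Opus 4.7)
The plan is to bound $\max_{j\in\Lambda\setminus S}\bar{\nu}_{j,S}$ from below using Assumption~\ref{ass:lower_bound_kappa} and $\max_{j\notin\Lambda}\bar{\nu}_{j,S}$ from above using Assumption~\ref{ass:incoherence_condition_binary}, subtract to obtain the gap, and then propagate the gap through the FS iteration by induction.

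The starting point is the binary identity $\bar{\nu}_{k,S}=2\bE(|\mCov_{X_S}(X_0,X_k)|)$ recorded just above the theorem. Applying the tower property and the MTD decomposition $\bE(X_0\mid X_{-d:-1})=\lambda_0 m_0+\sum_j\lambda_j m_j(X_j)$, together with the elementary binary identity
\begin{equation*}
\bE\bigl(f(X_k)(X_k-\bE(X_k\mid X_S))\bigm|X_S\bigr) = \bP_{X_S}(X_k=1)\bP_{X_S}(X_k=0)(f(1)-f(0)),
\end{equation*}
yields
\begin{equation*}
\mCov_{X_S}(X_0,X_k) = \bP_{X_S}(X_k=1)\bP_{X_S}(X_k=0)\Bigl[\lambda_k(m_k(1)-m_k(0))+\sum_{j\in\Lambda\setminus(S\cup\{k\})}\lambda_j\bigl(\bE_{X_S}(m_j(X_j)\mid X_k=1)-\bE_{X_S}(m_j(X_j)\mid X_k=0)\bigr)\Bigr].
\end{equation*}

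For $k\in\Lambda\setminus S$, a reverse triangle inequality isolates $|\lambda_k(m_k(1)-m_k(0))|=\delta_k$, and Assumption~\ref{ass:lower_bound_kappa} with $b=1$, $c=0$ bounds the remaining sum by $(1-\Gamma_1)\delta_k$, producing $|\mCov_{X_S}(X_0,X_k)|\geq \bP_{X_S}(X_k=1)\bP_{X_S}(X_k=0)\,\Gamma_1\delta_k$. Passing to expectation and using the uniform lower bound $\bP_{X_S}(X_k=b)\geq p_{min}$ (a consequence of the mixture lower bound $p(a\mid x_{-d:-1})\geq p_{min}$ together with Assumption~\ref{Ass:MTD_stationary}) gives $\bar{\nu}_{k,S}\geq 2\Gamma_1 p_{min}^2\delta_{min}$. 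For $k\notin\Lambda$ the $\lambda_k(m_k(1)-m_k(0))$ term vanishes since either $\lambda_k=0$ or $m_k(1)=m_k(0)$; the remaining sum is rewritten using the binary identity $\bE_{X_S}(m_j(X_j)\mid X_k=b)=m_j(0)+(m_j(1)-m_j(0))\bP_{X_S}(X_j=1\mid X_k=b)$ and the Bayes symmetry
\begin{equation*}
\bP_{X_S}(X_k=1)\bP_{X_S}(X_k=0)\bigl(\bP_{X_S}(X_j=1\mid X_k=1)-\bP_{X_S}(X_j=1\mid X_k=0)\bigr) = \bP_{X_S}(X_j=1)\bP_{X_S}(X_j=0)\bigl(\bP_{X_S}(X_k=1\mid X_j=1)-\bP_{X_S}(X_k=1\mid X_j=0)\bigr),
\end{equation*}
so that the pairwise influences controlled by Assumption~\ref{ass:incoherence_condition_binary} appear explicitly. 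Pulling the max over $x_S$ through the expectation and invoking Assumption~\ref{ass:incoherence_condition_binary} gives a multiple of $\Gamma_2$; carefully matching the $\delta_j$ factors from the MTD decomposition against the uniform $p_{min}$ bounds on $\bP_{X_S}(X_j=b)$ yields $\bar{\nu}_{k,S}\leq 2\Gamma_2 p_{min}^2\delta_{min}$. Subtracting the two bounds proves the gap inequality.

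For the second assertion, let $\hat S_m^{(i)}$ denote the candidate set after $i$ iterations of the FS step and argue by induction that $\hat S_m^{(i)}\subseteq\Lambda$. If $\hat S_m^{(i)}\subsetneq\Lambda$, applying the gap inequality at $S=\hat S_m^{(i)}$ together with $|\hat{\nu}_{m,\cdot,\hat S_m^{(i)}}-\bar{\nu}_{\cdot,\hat S_m^{(i)}}|\leq\xi$ on $G_m(\ell,\xi)$ and the choice $\xi<(\Gamma_1-\Gamma_2)p_{min}^2\delta_{min}$ forces $\arg\max_{k\in(\hat S_m^{(i)})^c}\hat{\nu}_{m,k,\hat S_m^{(i)}}\in\Lambda$; hence the next added lag $j_*\in\Lambda$ and $\hat S_m^{(i+1)}\subseteq\Lambda$. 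Because $\ell\geq|\Lambda|$, the induction runs long enough to guarantee $\Lambda\subseteq\hat S_m$.

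The main obstacle I expect is obtaining the $p_{min}^2\delta_{min}$ scaling in the upper bound for $k\notin\Lambda$: Assumption~\ref{ass:incoherence_condition_binary} only controls an unweighted sum of pairwise influences, so aligning the $\delta_j$ factors inherited from the MTD expansion with the $\Gamma_2$ bound—and simultaneously exploiting both sides of $\bP_{X_S}(X_j=b)\in[p_{min},1-p_{min}]$—is essential to produce constants compatible with the lower bound on the $\Lambda\setminus S$ side.
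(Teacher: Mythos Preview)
Your lower bound for $k\in\Lambda\setminus S$ is correct and matches the paper's: from the covariance decomposition and Assumption~\ref{ass:lower_bound_kappa} you obtain $\bar{\nu}_{j,S}\geq 2\Gamma_1|\Delta_j|\,\bE(\text{Var}_{X_S}(X_j))$ for each $j\in\Lambda\setminus S$, where $\Delta_j=\lambda_j(p_j(1|1)-p_j(1|0))$, and then $|\Delta_j|\geq\delta_{min}$ and $\text{Var}_{X_S}(X_j)\geq p_{min}^2$.

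The genuine gap is the upper bound you claim for $k\notin\Lambda$, namely $\bar{\nu}_{k,S}\leq 2\Gamma_2 p_{min}^2\delta_{min}$. This cannot be obtained by the mechanism you describe, and you have correctly identified it as the obstacle. After the Bayes symmetry you have
\[
\bar{\nu}_{k,S}\leq 2\,\bE\!\left[\sum_{j\in\Lambda\setminus S}|\Delta_j|\,\text{Var}_{X_S}(X_j)\,\bigl|\bP_{X_S}(X_k=1\mid X_j=1)-\bP_{X_S}(X_k=1\mid X_j=0)\bigr|\right],
\]
and Assumption~\ref{ass:incoherence_condition_binary} only bounds the unweighted sum of the rightmost factors by $\Gamma_2$. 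The coefficients $|\Delta_j|\,\text{Var}_{X_S}(X_j)$ satisfy \emph{lower} bounds $|\Delta_j|\geq\delta_{min}$ and $\text{Var}_{X_S}(X_j)\geq p_{min}^2$; these inequalities go the wrong way for an upper bound and cannot be ``matched'' to produce $p_{min}^2\delta_{min}$ on top. What you actually get by pulling out the largest coefficient is something like $2\Gamma_2\max_{j\in\Lambda\setminus S}|\Delta_j|\cdot(1/4)$, which is not comparable to the lower bound.

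The paper's fix is not to decouple the two maxima. It keeps the coefficient $|\Delta_{j}|\,\bE(\text{Var}_{X_S}(X_{j}))$ as a \emph{common factor}: choose a reference index $j_S\in\Lambda\setminus S$, bound $\max_{j\in\Lambda\setminus S}\bar{\nu}_{j,S}\geq\bar{\nu}_{j_S,S}\geq 2\Gamma_1|\Delta_{j_S}|\,\bE(\text{Var}_{X_S}(X_{j_S}))$, and bound $\max_{k\notin\Lambda}\bar{\nu}_{k,S}\leq 2\Gamma_2|\Delta_{j_S}|\,\bE(\text{Var}_{X_S}(X_{j_S}))$ by pulling the largest coefficient out of the sum before invoking Assumption~\ref{ass:incoherence_condition_binary}. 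Only \emph{after} subtracting does one apply $|\Delta_{j_S}|\geq\delta_{min}$ and $\bE(\text{Var}_{X_S}(X_{j_S}))\geq p_{min}^2$ to the nonnegative difference $2(\Gamma_1-\Gamma_2)|\Delta_{j_S}|\,\bE(\text{Var}_{X_S}(X_{j_S}))$. This coupling through $j_S$ is the missing idea in your plan. Your inductive argument for the FS step is fine once the gap inequality is in hand.
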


The proof of Theorem \ref{thm:FSC_consistency_binary} is given in Appendix \ref{App:proof_them_FSC_consis_binary}.

\begin{remark} \label{rem:FS}
    Notice that if the size of $\Lambda$ is known, then Theorem \ref{thm:FSC_consistency_binary} implies $\hat{S}_m=\Lambda$ on the event $G_m(|\Lambda|,\xi)$ with $\xi$ satisfying \eqref{choice_xi_consis_with_FS_step_only}. 
    In particular, in this case, we neither need to execute the \texttt{CUT} step nor to split the data into two pieces. 
\end{remark}

In the same spirit of the previous corollaries, we can show the following result.

\begin{corollary}
\label{cor:consistency_SEE}
For each $n$, consider a MTD model with set of relevant lags $\Lambda_{n}$ and transition probabilities $p_n(a|x_{\Lambda_n})$ satisfying Assumptions \ref{ass:lower_bound_kappa} and \ref{ass:incoherence_condition_binary} with $\Gamma_{1,n}=\Gamma_1>\Gamma_2=\Gamma_{2,n}$ and such that $|\Lambda_{n}|\leq L$ for some integer $L$, $p_{min,n}\geq p^{\star}_{min}$ and $\Delta_n\geq \Delta^{\star}_{min}$ for some positive constants  $p^{\star}_{min}$ and $\Delta^{\star}_{min}$.
Let $X_{1:n}$ be a sample from the MTD specified by $\Lambda_n$ and $p_n(a|x_{\Lambda_n}),$ and denote $\hat{\Lambda}_{2,n}$ the \texttt{FSC} estimator with parameters 
with parameters $m_n=n/2$, $\mu_n=\mu\in(0,3)$ such that $\mu>\psi(\mu)$, $\varepsilon_n=\varepsilon>0$, $\alpha_n=(1+\eta)\log(n)$ with $\eta>0$ and $\ell=L$. Suppose that $d_n=\beta n$ with $\beta\in (0,1)$. 
Then there exists a constant $C=C(\beta,L,\Delta^*_{min}, p^*_{min},\Gamma_1, \Gamma_2,\eta,\mu,\varepsilon)>0$ such that $\bP(\hat{\Lambda}_{2,n}\neq \Lambda^*)\to 0$ as $n\to \infty$, as long as
\begin{equation}
\label{cond_consis_SEE}
\delta_{min,n}\geq C\sqrt{\frac{\log(n)}{n}},    
\end{equation}
\end{corollary}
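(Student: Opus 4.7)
The strategy is to parallel the proof of Corollary \ref{cor:consistency_FSC}, replacing Theorem \ref{Prop:Correctness_1_FSC_Algo} by the sharper binary-case result Theorem \ref{thm:FSC_consistency_binary}. This substitution lets us take a tolerance $\xi_n$ that scales only like $\delta_{min,n}$ (rather than being fixed) and use $\ell = L$, a constant, in place of the potentially growing $\ell_*$. These two improvements are precisely what delivers the sharper rate $\delta_{min,n} \gtrsim \sqrt{\log n / n}$ instead of $\delta^2_{min,n} \gtrsim \log n / n^{(1+\gamma)/2}$.

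The first step is to set $\xi_n := \tfrac{1}{2}(\Gamma_1 - \Gamma_2)(p^\star_{min})^2 \delta_{min,n}$ and decompose
\[
\bP(\hat{\Lambda}_{2,n} \neq \Lambda_n) \leq \bP(G_{m_n}(L, \xi_n)^c) \;+\; \bP\bigl(\hat{\Lambda}_{2,n} \neq \Lambda_n,\, G_{m_n}(L, \xi_n)\bigr),
\]
where $G_m(\ell, \xi)$ is the good event defined in \eqref{def:Good_event}. Under the hypothesis $p_{min,n}\geq p^\star_{min}$, the choice of $\xi_n$ satisfies the bound \eqref{choice_xi_consis_with_FS_step_only}; since $\ell = L \geq |\Lambda_n|$, Theorem \ref{thm:FSC_consistency_binary} yields $\Lambda_n \subseteq \hat{S}_{m_n}$ on $G_{m_n}(L, \xi_n)$. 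Consequently, on this event the CUT step applied to the size-$L$ set $\hat{S}_{m_n}$ and the fresh tail $X_{m_n+1:n}$ is exactly a \texttt{PCP} estimator in the sense of Section \ref{Sec:PCP_estimator}.

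For the first term, I would use the same concentration ingredient that bounds $\bP(G_m(\ell_*, \xi_*)^c)$ in the proof of Theorem \ref{thm:consistency_FSC}: for each $(j, S)$ with $j \notin S$ and $|S| \leq L$, it gives $\bP(|\hat{\nu}_{m_n, j, S} - \bar{\nu}_{j,S}| > \xi_n) \lesssim \exp(-c\, m_n \xi_n^2)$, with $c$ depending only on $L$, $|A|=2$ and $\Delta^\star_{min}$. A union bound over the $O(d_n^{L+1}) = O(n^{L+1})$ pairs, combined with $m_n \xi_n^2 \gtrsim C^2 \log n$, produces a total of order $n^{L+1 - c' C^2}$, which vanishes once $C$ is large enough. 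For the second term, I would invoke Theorem \ref{thm:consistency_PCP_estimator} conditionally on $\hat{S}_{m_n}$ (of fixed size $L$), with sample $X_{m_n+1:n}$ and $\alpha = \alpha_n = (1+\eta)\log n$. Since $L$ is constant, Remark \ref{rmk:PCP_exp} gives $\mathbf{P}_{\hat{S}_{m_n}} \geq p^\star_{min}/|A|^{L-1}$ bounded below by a positive constant, so the sample-size condition \eqref{def:min_sample} is met for $n$ large under \eqref{cond_consis_SEE}; then \eqref{PCP_estimator_error_probability} bounds this conditional probability by $O(n^{L+1}\log n \cdot n^{-(1+\eta)}) + O(\exp(-c n))$, both of which go to $0$.

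The main technical point is juggling the polynomial prefactor $O(n^{L+1})$ (from the union bound over pairs $(j,S)$ and compatible pasts) against the exponential concentration $\exp(-c m_n \xi_n^2) = n^{-\Theta(C^2)}$; the argument works only because $L$ is a constant, so every polynomial prefactor costs only finitely many powers of $n$ that can be absorbed into the exponent by taking $C = C(\beta, L, \Delta^\star_{min}, p^\star_{min}, \Gamma_1, \Gamma_2, \eta, \mu, \varepsilon)$ sufficiently large. The binary hypothesis enters twice: via Theorem \ref{thm:FSC_consistency_binary} itself (which does not extend beyond $|A|=2$ in the stated form), and via keeping the $|A|^{2(L+2)}$-type factors in the concentration bound harmless.
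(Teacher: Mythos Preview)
Your approach is essentially the paper's: decompose via $G_{m}(L,\xi)$, invoke Theorem~\ref{thm:FSC_consistency_binary} to secure $\Lambda_n\subseteq\hat S_{m_n}$ with $\xi_n\asymp\delta_{min,n}$, bound $\bP(G_{m}^c)$ by Proposition~\ref{prop:G_holds_high_proba}, and handle the \texttt{CUT} step as a \texttt{PCP} estimator on $X_{m_n+1:n}$.

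There is, however, one bookkeeping slip that would break the argument as written. You claim that \eqref{PCP_estimator_error_probability} bounds the \texttt{CUT}-step error by $O(n^{L+1}\log n\cdot n^{-(1+\eta)})+O(e^{-cn})$ and that both terms vanish. But $n^{L+1}\log n\cdot n^{-(1+\eta)}=n^{L-\eta}\log n$ does \emph{not} go to~$0$ for the generic $\eta>0$ allowed in the statement. The point you are missing is that when you sum $\bP(\hat S_m=S,\,\hat\Lambda_{2,n}\neq\Lambda)$ over the $O(n^L)$ candidate sets $S$, the $e^{-\alpha}$ term does \emph{not} pick up that combinatorial factor: the events $\{\hat S_m=S\}$ are disjoint, and the paper's proof of Theorem~\ref{thm:consistency_FSC} exploits this together with $\sum_{x\in A^S}\bP(\hat S_m=S,\bar N_{m,n}(x)>0)\le (n-m-d)\bP(\hat S_m=S)$ to keep the first term at $O((L-|\Lambda|)\,n\log n\cdot n^{-(1+\eta)})=O(n^{-\eta}\log n)$. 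Only the Gaussian-type exponential term acquires the factor $(L+1)\binom{d}{L}=O(n^L)$, and that is harmlessly absorbed by $e^{-cn}$. Once you route the sum through disjointness in this way, your outline goes through and matches the paper's proof.
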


The proof of Corollary \ref{cor:consistency_SEE} is given in Appendix \ref{sec:prof_cor_consist_SEE}.

\subsection{Post-selection transition probabilities estimation }
\label{Sec:ETP}
Once the set of relevant lags have been estimated by applying the FSC estimator to the sample $X_{1:n}$, we reuse the entire sample to compute the estimator $\hat{p}_n(a|x_{\hat{\Lambda}_{2,n}})$ of the transition probability $p(a|x_{\Lambda})$.  
In the next result, we provide an estimate for rate of convergence of $\hat{p}_n(a|x_{\hat{\Lambda}_{2,n}})$ towards $p(a|x_{\Lambda})$, simultaneously for all pasts $x_{-d:-1}\in A^{\llb -d,-1\rrb}.$
\begin{theorem}
\label{THM:ETP}
Under assumptions and notation of Theorem \ref{thm:consistency_FSC},
\begin{multline}
\bP\left(\bigcup_{a\in A}\bigcup_{x_{-d:-1}\in A^{\llb -d, -1\rrb}}\left\{|\hat{p}_n(a|x_{\hat{\Lambda}_{2,n}})-p(a|x_{\Lambda})|\geq \sqrt{\frac{2\alpha(1+\epsilon)\hat{V}_{n}(a,x_{\hat{\Lambda}_{2,n}})}{\bar{N}_{n}(x_{\hat{\Lambda}_{2,n}})}}\right.\right.\\
\left. \left. +\frac{\alpha}{3\bar{N}_{n}(x_{\hat{\Lambda}_{2,n}})} \right\}\right)
\leq  4|A|(n-d)\left\lceil\frac{\log(\mu (n-m-d)/\alpha+2)}{\log(1+\varepsilon)}\right\rceil e^{-\alpha}+\bP(\hat{\Lambda}_{2,n}\neq \Lambda),
\end{multline}
where $\hat{V}_{n}(a,x_{\hat{\Lambda}_{2,n}})$ is given by
$$
\hat{V}_{n}(a,x_{\hat{\Lambda}_{2,n}})=\frac{\mu}{\mu-\psi(\mu)}\hat{p}_{n}(a|x_{\hat{\Lambda}_{2,n}})+\frac{\alpha}{\mu-\psi(\mu)}\frac{1}{\bar{N}_{n}(x_{\hat{\Lambda}_{2,n}})}.
$$
\end{theorem}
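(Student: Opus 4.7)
The plan is to decompose the probability using the model-selection event $\{\hat{\Lambda}_{2,n} = \Lambda\}$, writing
\[
\bP(E) \leq \bP\bigl(E \cap \{\hat{\Lambda}_{2,n} = \Lambda\}\bigr) + \bP(\hat{\Lambda}_{2,n} \neq \Lambda),
\]
where $E$ denotes the union event appearing inside the probability on the left-hand side of the conclusion. The second term is already kept in the stated upper bound, so it only remains to control the first. On the event $\{\hat{\Lambda}_{2,n} = \Lambda\}$ two simplifications happen simultaneously: the estimator coincides with $\hat{p}_n(a|x_{\Lambda})$, and, since the MTD depends on the past only through the relevant lags, $p(a|x_{-d:-1}) = p(a|x_{\Lambda})$ for every extension $x_{-d:-1}$ of $x_{\Lambda}$. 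Therefore it suffices to prove the empirical Bernstein deviation for $|\hat{p}_n(a|x_{\Lambda}) - p(a|x_{\Lambda})|$, uniformly in $a \in A$ and $x_{\Lambda} \in A^{\Lambda}$.

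For each fixed pair $(a,x_{\Lambda})$, introduce the increments $D_t = \mathbf{1}\{X_{t+\Lambda} = x_{\Lambda}\}\bigl(\mathbf{1}\{X_t = a\} - p(a|x_{\Lambda})\bigr)$ with respect to the natural filtration. These are bounded martingale differences whose partial sums satisfy $S_n = N_n(x_{\Lambda},a) - p(a|x_{\Lambda})\bar{N}_n(x_{\Lambda})$, so that $\hat{p}_n(a|x_{\Lambda}) - p(a|x_{\Lambda}) = S_n/\bar{N}_n(x_{\Lambda})$ whenever $\bar{N}_n(x_{\Lambda})>0$. Applying the empirical Bernstein martingale concentration inequality of Appendix \ref{Sec:Mart_Conc_Ineq} — with a peeling argument over dyadic $(1+\varepsilon)$-geometric scales of the random denominator $\bar{N}_n(x_{\Lambda})$, which is exactly what produces the factor $\lceil \log(\mu(n-m-d)/\alpha+2)/\log(1+\varepsilon)\rceil$ together with the multiplicative constant $4$ — yields the one-context, one-symbol deviation with exception probability bounded by $4\lceil \log(\mu(n-m-d)/\alpha+2)/\log(1+\varepsilon)\rceil e^{-\alpha}$, with the empirical variance $\hat{V}_n(a,x_{\Lambda})$ appearing in the bound exactly as required.

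It remains to take a union bound over $a\in A$ and over the strings $x_{-d:-1}\in A^{\llb -d, -1\rrb}$. For any $x_{-d:-1}$ whose $\Lambda$-restriction never appears in $X_{1:n}$ we have $\bar{N}_n(x_{\Lambda})=0$, in which case the right-hand side of the claimed inequality is $+\infty$ by convention and the event is automatically excluded. Hence the union bound need only account for contexts actually observed in the sample, of which there are at most $n-d$. Multiplying by the $|A|$ choices of the symbol $a$ produces the leading factor $4|A|(n-d)\lceil \log(\mu(n-m-d)/\alpha+2)/\log(1+\varepsilon)\rceil e^{-\alpha}$, matching the first term in the conclusion.

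The main subtlety I anticipate is the bookkeeping in the empirical-Bernstein peeling: to obtain the data-dependent variance proxy $\hat{V}_n$ (rather than a cruder uniform bound) the martingale inequality must be applied in its self-normalized empirical form, and the random denominator $\bar{N}_n(x_{\Lambda})$ must be stratified carefully so that the per-stratum exception probabilities sum to the claimed constant $4e^{-\alpha}$ times the logarithmic peeling factor. Beyond that step, everything reduces to combining the three ingredients (reduction via $\hat{\Lambda}_{2,n}=\Lambda$, martingale concentration per context, union bound over observed contexts and symbols), so the argument is essentially organizational once these pieces are in place.
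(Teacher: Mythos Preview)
Your proposal follows the same three-step skeleton as the paper's proof: split on $\{\hat{\Lambda}_{2,n}=\Lambda\}$, apply the empirical Bernstein inequality of Appendix~\ref{Sec:Mart_Conc_Ineq} per pair $(a,x_\Lambda)$, then union bound. That part is fine and matches the paper.

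There is, however, one genuine slip in your union-bound step. You write that the per-context exception probability is $4\lceil\cdots\rceil e^{-\alpha}$ and then argue that since at most $n-d$ contexts are observed in the sample, the union bound contributes a factor $n-d$. But the set of observed contexts is \emph{random}, and you cannot bound $\bP(\bigcup_{x_\Lambda} E_{x_\Lambda})$ by multiplying a per-event probability bound by the random cardinality of the set of events that ``actually occur''. (A toy counterexample: if $X$ is uniform on $\{1,\ldots,N\}$ and $E_i=\{X=i\}$, exactly one $E_i$ occurs, yet $\bP(\bigcup E_i)=1$, not $1\cdot(1/N)$.) What makes the argument work is that Proposition~\ref{prop:bernstein_empirical_trans_prob_correct_var} delivers the stronger bound
\[
\bP(E_{x_\Lambda})\;\leq\;4\left\lceil\tfrac{\log(\mu(n-d)/\alpha+2)}{\log(1+\varepsilon)}\right\rceil e^{-\alpha}\,\bP\bigl(\bar N_n(x_\Lambda)>0\bigr),
\]
with the extra factor $\bP(\bar N_n(x_\Lambda)>0)$ on the right. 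Summing this over \emph{all} $x_\Lambda\in A^\Lambda$ and using $\sum_{x_\Lambda}\bP(\bar N_n(x_\Lambda)>0)=\bE\bigl[\sum_{x_\Lambda}\1\{\bar N_n(x_\Lambda)>0\}\bigr]\le n-d$ is what legitimately produces the $n-d$ factor. This is exactly how the paper argues; once you insert that factor, your outline becomes a correct proof.
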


The proof of Theorem \ref{THM:ETP} is given in Appendix \ref{sec_proof_section_ETP}.


\subsection{A remark on the minimax rate for the lag selection}
\label{sec:minimax_rate}

We take $A=\{0,1\}$ and consider the set of $\{p^{(j)}(\cdot|\cdot),j \in \llb -d,-1\rrb\}$ of transition probabilities of the following form:
\begin{equation}
 p^{(j)}(1|x_{-d:-1})=\frac{(1-\lambda)}{2}+\lambda p(1|x_j), \ j\in \llb -d,-1\rrb, \lambda\in (0,1),   
\end{equation}
where $\lambda|p(1|1)-p(1|0)|:=\delta>0$. For each $j\in \llb -d,-1\rrb$, we denote $\bP^{(j)}$ the probability measure under which $(X_t)_{t\in\bZ}$ is a stationary MTD model having transition probability $p^{(j)}(\cdot|\cdot)$. For each $t \geq 1$, we denote $P^{(j)}_{t}$ the marginal distribution with respect to the variables $X_{1:t}$:
$$
P^{(j)}_t(x_{1:t})=\bP^{(j)}(X_{1:t}=x_{1:t})
$$
In what follows, $KL(P^{(j)}_{t}||P^{(k)}_{t})$ denotes the {\it Kullback-Leibler} divergence between the distributions $P^{(j)}_{t}$ and $P^{(k)}_{t}$. We denote $MTD_{d,\delta}$ the set all transition probabilities $p=\{p(a|x_{\Lambda}):a\in A,x_{\Lambda}\in A^{\Lambda}\}$ of a MTD model of order $d$ whose corresponding $\delta_{min}\geq \delta$. 
For a given $p\in MTD_{d,\delta}$, we denote $\bP_{p}$ the probability distribution under which 
$(X_t)_{t\in\bZ}$ is a stationary MTD model of order $d$ with transition probabilities given by $p$.
With this notation, we have the following result.

\begin{proposition}
\label{prop:upper_bound_on_minimal_oscilation}
Let $n>d$. Then the following inequality holds: for $j,k\in \llb -d,-1\rrb$,
\begin{equation}
\label{ineq:KL}
KL(P^{(j)}_{n}||P^{(k)}_{n})\leq \frac{2n\delta^2}{1-\lambda}.    
\end{equation}
In particular, if $\beta \in (0,1)$, $d=n\beta$, and 
\begin{equation}
\label{upper_bound_on_minimal_oscillations}
\delta^2\leq \frac{(1-\lambda)}{2n}\left(\frac{\log(n\beta)}{2}-\log(2)\right),
\end{equation}
then
\begin{equation}
\label{eq:lower_bound_minmax}
\inf_{\hat{\Lambda}_n}\sup_{p\in \text{MTD}_{d,\delta}}\bP_{p}(\hat{\Lambda}_n\neq\Lambda)\geq 1/4,    
\end{equation}
where the infimum is over all lag estimators $\hat{\Lambda}_n$ based on a sample of size $n$.
\end{proposition}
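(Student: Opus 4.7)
The plan is to establish the Kullback--Leibler bound \eqref{ineq:KL} first, and then combine it with Fano's inequality applied to the $d$-hypothesis family $\{\bP^{(j)} : j \in \llb -d, -1 \rrb\}$ to derive the minimax lower bound \eqref{eq:lower_bound_minmax}.

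For \eqref{ineq:KL}, I would apply the chain rule for KL divergence along the Markov filtration of the stationary MTD:
\[
KL(P_n^{(j)}||P_n^{(k)}) = KL(P_d^{(j)}||P_d^{(k)}) + \sum_{t=d+1}^{n}\bE_{\bP^{(j)}}\bigl[ KL(p^{(j)}(\cdot|X_{t-d:t-1})||p^{(k)}(\cdot|X_{t-d:t-1})) \bigr].
\]
For each $t$, both conditional distributions are Bernoulli with parameters lying in $[(1-\lambda)/2,(1+\lambda)/2]$, and Taylor's theorem in Lagrange form gives the identity
\[
KL(\mathrm{Ber}(q_1)||\mathrm{Ber}(q_2)) = \frac{(q_1-q_2)^2}{2\,q^*(1-q^*)}
\]
for some $q^*$ lying between $q_1$ and $q_2$. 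Since $q^*\in[(1-\lambda)/2,(1+\lambda)/2]$, we have $q^*(1-q^*)\geq(1-\lambda^2)/4$; and since $|q_1-q_2|=\lambda|p(1|x_{t+j})-p(1|x_{t+k})|\leq \delta$ (with equality exactly when $x_{t+j}\neq x_{t+k}$), each per-term conditional KL is at most $2\delta^2/(1-\lambda^2)\leq 2\delta^2/(1-\lambda)$. Summing the $n-d$ terms and absorbing the initial block $KL(P_d^{(j)}||P_d^{(k)})$ via a doubly-infinite stationary extension plus the data-processing inequality yields \eqref{ineq:KL}.

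For the minimax bound \eqref{eq:lower_bound_minmax}, I would apply Fano's inequality to the $d$-ary hypothesis testing problem of identifying $j$ from the sample $X_{1:n}$. Since the set of relevant lags under $\bP^{(j)}$ equals $\{j\}$, any lag estimator $\hat\Lambda_n$ induces a multiway test $\hat\jmath$ satisfying $\{\hat\Lambda_n=\{j\}\}\subseteq \{\hat\jmath=j\}$, and therefore
\[
\inf_{\hat\Lambda_n}\sup_{p\in\text{MTD}_{d,\delta}}\bP_p(\hat\Lambda_n\neq \Lambda) \;\geq\; \inf_{\hat\jmath}\max_j \bP^{(j)}(\hat\jmath\neq j) \;\geq\; 1-\frac{\max_{j,k} KL(P_n^{(j)}||P_n^{(k)}) + \log 2}{\log d}.
\]
Substituting \eqref{ineq:KL}, taking $d=n\beta$, and invoking the hypothesized bound \eqref{upper_bound_on_minimal_oscillations} on $\delta^2$, the numerator is at most $(1/2)\log(n\beta)$, so the right-hand side is at least $1/2\geq 1/4$.

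The principal technical obstacle is the boundary term $KL(P_d^{(j)}||P_d^{(k)})$ in the chain-rule decomposition, since no closed-form stationary distribution of the length-$d$ MTD block is available in general. Handling it cleanly requires embedding $X_{1:n}$ into a doubly-infinite stationary process, applying the chain rule on the enlarged path (where there are no boundary contributions), and using the data-processing inequality to transfer the bound back to $P_n^{(j)}$. Once this is resolved, the remaining steps---the Taylor expansion of the Bernoulli KL and the Fano computation---are routine.
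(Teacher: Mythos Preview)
Your chain-rule decomposition, the Taylor/Bregman bound on the per-step Bernoulli KL, and the Fano computation all match the paper's argument and are correct. The one genuine gap is your treatment of the initial block $KL(P_d^{(j)}\|P_d^{(k)})$.

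The data-processing idea you propose does not close this gap. Embedding $X_{1:n}$ into a longer stationary path $X_{-M+1:n}$ and applying data processing gives
\[
KL(P_n^{(j)}\|P_n^{(k)})\;\leq\;KL(P_{n+M}^{(j)}\|P_{n+M}^{(k)})
\;=\;KL(P_d^{(j)}\|P_d^{(k)})+(n+M-d)\,\bE^{(j)}\bigl[KL(p^{(j)}\|p^{(k)})\bigr],
\]
so the boundary term is still there and the sum is now longer; letting $M\to\infty$ only makes things worse. There is no ``enlarged path with no boundary contributions'': data processing can only increase the bound, never absorb the initial block.

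The paper instead bounds $KL(P_d^{(j)}\|P_d^{(k)})$ directly by applying the chain rule once more on the first $d$ coordinates and observing that, for $1\le t\le d$, the marginalized conditional $\bP^{(j)}(X_t=1\mid X_{1:t-1})=\tfrac{1-\lambda}{2}+\lambda\,\bE^{(j)}[p(1\mid X_{t+j})\mid X_{1:t-1}]$ is still a Bernoulli parameter in $[(1-\lambda)/2,(1+\lambda)/2]$, and the difference between the $\bP^{(j)}$- and $\bP^{(k)}$-versions is bounded by $\lambda|p(1|1)-p(1|0)|=\delta$ (both are convex combinations of $p(1|0)$ and $p(1|1)$). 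Your own Taylor bound then gives each of these $d$ terms at most $2\delta^2/(1-\lambda)$, so $KL(P_d^{(j)}\|P_d^{(k)})\le 2d\delta^2/(1-\lambda)$, and adding the $(n-d)$ post-boundary terms yields exactly $2n\delta^2/(1-\lambda)$. Replace the data-processing step with this direct estimate and the proof goes through.
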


The proof of \eqref{eq:lower_bound_minmax} follows immediately from Fano's inequality and the upper bound \eqref{ineq:KL}.
Combining \eqref{cond_consis_SEE} and \eqref{upper_bound_on_minimal_oscillations}, we deduce that the condition on the minimal oscillation required for the consistency of the \texttt{FSC} estimator in Corollary \ref{cor:consistency_SEE} is sharp. The proof of Proposition \ref{prop:upper_bound_on_minimal_oscilation} is given in Appendix  \ref{prof_prop:upper_bound_on_minimal_oscilation}

\section{Simulations}
\label{sec:sim}
Here, we investigated the performance of the proposed methods using simulations. 

\subsection{Experiment 1}

 We first used a MTD model on alphabet $A=\{0,1\}$ with two relevant lags, denoted here as $-i$ and $-j$ for notational convenience. The choices for the order $d$ and for the values of $i$ and $j$ are shown in the first three columns of Table \ref{tbl0}.  Let $p_0(1) = p_0(0) = 0.5$ and $\lambda_0 = 0.4$. Also, let $\lambda_{-i} = 0.2$, $\lambda_{-j} = 0.4$, $p_{-i}(0|0) = 0.3$, $p_{-i}(0|1) = 0.6$, $p_{-j}(0|0) = 0.5$, and $ p_{-j}(0|1) = 0.9$. For all $x_{-d:-1} \in \{0,1\}^{\llb -d,-1\rrb}$ and $a \in A$, the transition probability of the model was given by
\begin{equation*}
p(a|x_{-d:-1})  = \lambda_0p_0(a) + \lambda_{-i}p_{-i}(a|x_{-i}) + \lambda_{-j}p_{-j}(a|x_{-j}).
\end{equation*}

We simulated the above model using sample sizes $n \in \{10^2, 5.10^2,10^3, 5.10^3, 10^4, 5.10^4, 10^5, 5.10^5 \}$. For each choice of $i,j,d$, and $n$ we simulated 100 realizations.
We compared four different methods to select the relevant lags. \texttt{FSC}$(\ell)$ stands for the Forward Stepwise and Cut algorithm described in Algorithm 1 with parameter $\ell$, $\epsilon = 0.1$, $\mu = 0.5$, and $\alpha = C \log(n)$, where the values of the constant $C$ was chosen by optimizing the probability to select the  relevant lags correctly only for sample size $n = 100$, for the given choice of $d$, $i$ and $j$. We used the first $n/2$ samples for the Forward Stepwise and the last $n/2$ for Cut. Remember that $\epsilon, \mu, \alpha$ are used to define the random threshold for the Cut step. \texttt{BSS}(2) stands for the best subset selection algorithm, where we first estimated the parameters of the MTD model using $n$ samples and the algorithm described in \cite{berchtold2001estimation} with python implementation \texttt{mtd-learn}. This algorithm estimated for $k \in \{1, 2, \ldots, d\}$ the weight parameters $\lambda_{-k}$. We then choose the lags of the two largest $\lambda_{-k}$ as the lags selected by \texttt{BSS}(2). 
We were not able to run the mtd-learn on models with order $d$ larger than 15 in our computers because that algorithm did not converge. Finally, \texttt{CTF}($\ell$) stands for Conditional Tensor Factorization based Higher Order Markov Chain estimation together with the test for relevance of lags described in \cite{sarkar2016bayesian},
the parameter $\ell$ being the maximal number of relevant lags.
We used the code available at \url{https://github.com/david-dunson/bnphomc}. The maximal possible order of the Markov chain was set to $d$ and the number of simulation for the Gibbs sampler was set to 1000. The set of relevant lags chosen by \texttt{CTF} was given by the lags with non-null inclusion probability estimated using the Gibbs sampler. We were not able to run \texttt{CTF}($\ell$) when $j=n/5$ and $d=n/4$ because the algorithm did not converge when $n > 10^{3}$.  We note that \texttt{FS} and \texttt{BSS} assume prior knowledge of the number of relevant sites, giving advantage over \texttt{FSC} and \texttt{CTF}.  The results are indicated in Table \ref{tbl0}.

\begin{table}[ht]
\caption{Estimated probability of correctly selecting only the relevant lags.}
\begin{tabular}{lll|l|lllllllll} \label{tbl0}\\
\hline
\multicolumn{3}{c}{Model parameter} &  \multicolumn{1}{c}{Method} & \multicolumn{8}{c}{Sample size (n)} \\
$i$ & $j$ & $d$ &  & $\mathbf{100}$ & $\mathbf{500}$ & $\mathbf{10^3}$ & $\mathbf{5.10^3}$ & $\mathbf{10^4}$ & $\mathbf{5.10^4}$ & $\mathbf{10^5}$ & $\mathbf{5.10^5}$\\ 
\hline \\
1 & 8 &  8 & {\texttt{FSC}(3)} & 0.05 & 0.08 & 0.13 & 0.53 & 0.81 & 0.86 & 0.93 &1 \\
1 & 8 & 8 & {\texttt{CTF}(3)} & 0 & 0 & 0.04 & 0.67 &  0.99 &  1 & 1 & 1\\
1 & 8 &  8 & {\texttt{FS}(2)} & 0.07 & 0.3 & 0.47 & 0.98 & 1 & 1 & 1 & 1 \\
1 & 8 & 8 & {\texttt{BSS}(2)} & 0.05 & 0.14 & 0.23 & 0.41 & 0.79 & 0.78 & 0.84 & 0.87\\
\hline
1 & 15 & 15 & {\texttt{FSC}(5)} & 0.03 & 0.36 & 0.51 & 0.82 &  0.97 &  1 & 1 & 1\\
1 & 15 & 15 & {\texttt{CTF}(5)} & 0 & 0 & 0.01 & 0.62 &  0.99 &  1 & 1 & 1\\
1 & 15 &  15 & {\texttt{FS}(2)} & 0.02 & 0.2 & 0.66 & 0.92 & 1 & 1 & 1 & 1 \\
1 & 15 & 15 & {\texttt{BSS}(2)} & 0.04 & 0.18 & 0.17 & 0.28 & 0.31 & 0.8 & 0.8 & 0.93 \\
\hline
1 & n/5 & n/4 & {\texttt{FSC}(5)} & 0 & 0 & 0.04 & 0.19 & 0.46 & 1 & 1 & 1 \\
1 & n/5 & n/4 & {\texttt{CTF}(5)} & 0 & 0 & 0 & - & - & - & - & - \\
1 & n/5 & n/4 & {\texttt{FS}(2)} & 0.01 & 0.11 & 0.27 & 0.89 & 0.96 & 1 & 1 & 1 \\
1 & n/5 & n/4 & {\texttt{BSS}(2)} & - & - & - & - & - & - & - & - \\
\hline
\end{tabular}
\end{table}


\subsection{Experiment 2}
Here we used the following MTD model on alphabet $A=\{0,1\}$. We considered different choices of order $d$ and relevant lags $-i,-j \in \llb 1, d \rrb$ (see Table \ref{tbl1}).
Let $p_0(1) = p_0(0) = 0.5$ and $\lambda_0 = 0.2$. Also, let $p_{-i}(0|0) = 1-p_{-i}(0|1) = p_{-j}(0|1) = 1-p_{-j}(0|1) = 0.7$ and $\lambda_{-i} = \lambda_{-j} = 0.4$. For all $x_{-d:-1} \in \{0,1\}^{\llb -d,-1\rrb}$ and $a \in A$, the transition probability of the model was given by
\begin{equation*}
p(a|x_{-d:-1})  = \lambda_0p_0(a) + \lambda_{-i}p_{-i}(a|x_{-i}) + \lambda_{-j}p_{-j}(a|x_{-j}).
\end{equation*}

We simulated the above model using sample sizes $n \in \{2^8,2^9,2^{10},2^{11}, 2^{12}, 2^{13} \}$. For each choice of $i,j,d$, and $n$ we simulated 100 realizations. For each realization, we estimated the transition probability $p(0|0^d)$. We used different estimators for the comparisons. \texttt{FSC}$(\ell)$ and  \texttt{FS}$(\ell)$ are the same as described in Experiment 1.
For transition probability estimation with \texttt{FSC}, we used $X_{1:n/2}$ for Forward Stepwise and $X_{n/2+1:n}$ for Cut step, obtaining the estimated relevant lag set $\hat{\Lambda}_n$. Then we used $X_{1:n}$ to calculate $\hat{p}_{n}(0|0_{\hat{\Lambda}_n})$. For transition probability estimation after \texttt{PCP}, we used $X_{1:n}$ to calculate $\hat{\Lambda}_n$ for the  \texttt{PCP} relevant lag estimator with initial set $S = \llb -d,-1\rrb$. The parameters for the threshold were chosen as follows: $\epsilon = 0.1$, $\mu = 0.5$, and $\alpha = C \log(n)$, where we choose the values of the constant $C$ by optimizing the probability to select the  relevant lags correctly only for sample size $n = 100$, for the given choice of $d$, $i$ and $j$. Then we used $X_{1:n}$ to calculate $\hat{p}_{n}(0|0_{\hat{\Lambda}_n})$. We also compared the performance of transition probability estimator $\hat{p}_{n}(0|0_{-d:-1})$, where we did not select the relevant lags (\texttt{Naive} estimator). In our simulations, when $d$ was larger than 5, for both \texttt{PCP} and \texttt{Naive} estimators we did not obtain meaningful results because  $\bar{N}_{n}(0^d) = 0$ with high probability. Therefore, we compared \texttt{PCP} and \texttt{Naive} estimators only for $d = 5$. In this case, \texttt{FSC} showed similar performance to \texttt{PCP} estimator and was in general better than \texttt{Naive} estimator. When $d > 5$, \textit{e.g.} $d = n/8$, \texttt{FSC} still exhibited good performance. The results are indicated in Table \ref{tbl1}.

\begin{table}[ht]
\caption{Empirical standard deviation of the estimator of $p(0|0^d)$. \texttt{FSC}, \texttt{FS}, \texttt{PCP}, and \texttt{Naive} are described in the main text.}
\begin{tabular}{lll|l|lllllll} \label{tbl1}\\
\hline
\multicolumn{3}{c}{Model parameter} &  \multicolumn{1}{c}{Method} & \multicolumn{6}{c}{Sample size (n)} \\
$i$ & $j$ & $d$ &  & $\mathbf{256}$ & $\mathbf{512}$ & $\mathbf{1024}$ & $\mathbf{2048}$ & $\mathbf{4096}$ & $\mathbf{8192}$ \\ 
\hline \\
1 & 5 &  5 & {\texttt{FS}(2)} & 0.0774 & 0.0682 & 0.0506 & 0.0286 & 0.0174 & 0.0133 \\
1 & 5 & 5 & {\texttt{FSC}(5)} & 0.0745 & 0.0835 & 0.0602 & 0.0426 & 0.0222 & 0.0129 \\
1 & 5 & 5 & {\texttt{PCP}} & 0.0965 & 0.0786 & 0.0577 & 0.0432 & 0.0242 & 0.0131 \\
1 & 5 & 5 & {\texttt{Naive}} & 0.1518 & 0.0933 & 0.0624 & 0.0455 & 0.0340 & 0.0252 \\
1 & 5 & 10 & {\texttt{FSC}(5)} & 0.0836 & 0.0842 & 0.0659 & 0.0425 & 0.0228 & 0.0141 \\
1 & 10 & 15 & {\texttt{FSC}(5)} & 0.0864 & 0.0781 & 0.0641 & 0.0438 & 0.0249 & 0.0151 \\
1 & 15 & 15 & {\texttt{FSC}(5)} & 0.0833 & 0.0834 & 0.0747 & 0.0488 & 0.0222 & 0.0167 \\
11 & 100 & 120 & {\texttt{FSC}(5)} & - & - & 0.0838 & 0.0647 & 0.0312 &  0.0169\\
1 & 10 & n/8 & {\texttt{FSC}(5)} & 0.0563 & 0.0543 & 0.0780 & 0.0698 & 0.0504 &  0.0105\\
\hline
\end{tabular}%
\end{table}

\subsection{Application}
We applied the proposed method to study the relevant lags on a daily weather data registering the rainy and non-rainy days in Canberra Australia for a $n = 1000$ days. We obtained the data from kaggle \\(\url{https://www.kaggle.com/datasets/jsphyg/weather-dataset-rattle-package}). We used Forward Stepwise algorithm with $\ell = 3$ (\texttt{FS}(3)) and maximal order $d = 400$ to include the possibility of the annual cycle.  We obtained as the three relevant lags $\{1, 62, 330\}$. The selected relevant lags were the same for $d = 365$ and $d = 450$, showing teh robustness of the result. The day before (lag 1) is clearly relevant and is often included in weather prediction models. Annual cycles ($\approx 12$ months) are also predictor of the weather, matching the 330 days lag that we found. Finally, the 62 days lag is consistent with the cycle of Madden-Julian oscillator ($\approx$ 60 days), which is the largest inter-seasonal source of precipitation events in Australia (\cite{wheeler2009impacts}). We note that the estimated Markov chain is of order 330, which is around one-third of the sample size $n = 1000$, whereas using VLMC we do not expect to typically estimate Markov chains of order larger than $\log (10) \approx 7$. Indeed, using VLMC with BIC model selection criterion we selected a model with order 1. We set the upper limit of the model size as 400 for the VLMC order selection. As a further comparison, we applied the Conditional Tensor Factorization based Higher Order Markov Chain estimation together with the test for relevance of lags described in \cite{sarkar2016bayesian}. We again used the code available at \url{https://github.com/david-dunson/bnphomc}. The maximal possible order of the Markov chain was set to 400, the maximal number of relevant lags was 3, and the number of simulation for the Gibbs sampler was set to 1000. The inclusion probability calculated using Gibbs sampler for lags $(1, 2, 3, 4, 5, 6, 7)$ were $(100, 1.2, 0.2, 0.6, 0.2, 0.4, 0.2$) percent, respectively. For all other orders the inclusion probability was zero. Therefore, no larger lags were detected by this method. 

\section{Acknowledgments}
This research has been conducted as part of FAPESP project {\em Research, Innovation and
Dissemination Center for Neuromathematics} (grant 2013/07699-0).
GO is partially supported by FAPERJ (grants E-26/201.397/2021 and E-26/211.343/2019). and CNPq (grant 303166/2022-3).

\appendix

\section{Proofs of Section \ref{sec:SLS}}
\label{proofs_of_FSC_selection_procedure}

\subsection{Proofs of Section \ref{Sec:PCP_estimator}}

\subsubsection{Proof of Theorem \ref{thm:consistency_PCP_estimator}}
\label{Sec:proof_of_thm_cons_PCP_estimator}
\begin{proof}[Proof of Theorem \ref{thm:consistency_PCP_estimator}]
Since the set $S\subseteq \llb -d,-1\rrb$ containing the set $\Lambda$ is fixed, we will write $x$ instead of $x_S$ to alleviate the notation. We start proving Item 1.

{\bf Proof of Item 1}. For each $x\in A^{S}$, let us define the event
$$
G_{x}=\bigcap_{a\in A}\left\{|\hat{p}_n(a|x)-p(a|x)|<\sqrt{\frac{2\alpha(1+\varepsilon)\hat{V}_n(a,x)}{\bar{N}_n(x)}}+\frac{\alpha}{3\bar{N}_n(x)}\right\},
$$
where $\hat{V}_n(a,x)$ is given by
$$
\hat{V}_n(a,x)=\frac{\mu}{\mu-\psi(\mu)}\hat{p}_{n}(a|x)+\frac{\alpha}{(\mu-\psi(\mu))\bar{N}_{n}(x)}.
$$

By using first the union bound and then by applying Proposition \ref{prop:bernstein_empirical_trans_prob_correct_var}, we deduce that for each $x\in A^{S}$,
\begin{equation}
\label{ineq_proof_thm_1}
\bP(G^c_{x})\leq 4|A|\left\lceil\frac{\log(\mu (n-d)/\alpha+2)}{\log(1+\varepsilon)}\right\rceil e^{-\alpha}\bP\left(\bar{N}_{n}(x)>0\right).   
\end{equation}

Now, observe that on $G_{x}$, we have that
$$
d_{TV}(\hat{p}_n(\cdot|x),p(\cdot|x))<\sum_{a\in A}\sqrt{\frac{\alpha(1+\varepsilon)\hat{V}_n(a,x)}{2\bar{N}_n(x)}}+\frac{\alpha |A|}{6\bar{N}_n(x)}=s_n(x),
$$
which, together with \eqref{ineq_proof_thm_1}, implies that
\begin{equation}
\label{ineq_2_proof_thm_1}
\bP\left(d_{TV}(\hat{p}_n(\cdot|x),p(\cdot|x))\geq s_n(x)\right)\leq 4|A|\left\lceil\frac{\log(\mu (n-d)/\alpha+2)}{\log(1+\varepsilon)}\right\rceil e^{-\alpha}\bP\left(\bar{N}_{n}(x)>0\right).
\end{equation}

Note that if $j\notin\Lambda$, then by the definition of the set $\Lambda$ it follows that $p(a|x)=p(a|y)$ for all $x,y\in A^{S}$ which are $(S\setminus\{j\})$-compatible.
Hence, by applying first the triangle inequality and then using that 
$t_{n}(x,y)=s_n(x)+s_n(y)$, we deduce that the event
$
\{d_{TV}(\hat{p}_n(\cdot|x),\hat{p}_n(\cdot|y))\geq t_{n}(x,y)\} 
$
is contained in the event 
$$
\{d_{TV}(\hat{p}_n(\cdot|x),p(\cdot|x))\geq s_n(x)\}\cup \{d_{TV}(\hat{p}_n(\cdot|y),p(\cdot|y))\geq s_n(y)\},
$$
so that
\begin{align*}
\bP\left(j\in \hat{\Lambda}_{1,n}\right)&\leq 2\sum_{x\in A^{S}}\bP(d_{TV}(\hat{p}_n(\cdot|x),p_n(\cdot|x))\geq s_{n}(x))\\
&\leq 8|A|\left\lceil\frac{\log(\mu (n-d)/\alpha+2)}{\log(1+\varepsilon)}\right\rceil e^{-\alpha}\sum_{x\in A^{S}}\bP(\bar{N}_{n}(x)>0),
\end{align*}
where in the second inequality we have used \eqref{ineq_2_proof_thm_1}.

Since $n-d=\sum_{x\in A^{S} }\bar{N}_{n}(x)\geq \sum_{x\in A^{S}}1\{\bar{N}_{n}(x)> 0\}$ which implies that $n-d\geq \bE\left[\sum_{x\in A^{S} }1\{\bar{N}_{n}(x)>0 \}\right]=\sum_{x\in A^{S}}\bP(\bar{N}_{n}(x)>0)$, we obtain from the above inequality that,
$$
\bP\left(j\in \hat{\Lambda}_{1,n}\right)\leq 8|A|\left\lceil \frac{\log(\mu(n-d)/\alpha+2)}{\log(1+\varepsilon)}\right\rceil e^{-\alpha}(n-d),
$$
concluding the the proof of Item 1.

{\bf Proof of Item 2}. Let $j\in\Lambda$, recall that $\delta_j=\lambda_j \max_{b,c\in A}d_{TV}(p_j(\cdot|b),p_j(\cdot|c))$ and consider the event $E=\{\delta_j\geq \gamma_{n,j}\}$.
Take $b^{\star},c^{\star}\in A$ such that $d_{TV}(p_j(\cdot|b^{\star}),p_j(\cdot|c^{\star}))=\max_{b,c\in A}d_{TV}(p_j(\cdot|b),p_j(\cdot|c))$,
and observe that with this choice,
$$
\delta_j=\lambda_j d_{TV}(p_j(\cdot|b^{\star}),p_j(\cdot|c^{\star})).
$$
From this equality it follows that for any pair $(x,y)\in \cC_j(b^{\star},c^{\star})$, we have $$
E=\{d_{TV}(p(\cdot|x),p(\cdot|y))\geq 2t_{n,j}\},
$$
where we have used also that $\gamma_{n,j}=2t_{n,j}.$
Now, take a pair $(x^{\star},y^{\star})\in \cC_j(a^{\star},b^{\star})$ attaining the minimum in \eqref{Def:thresholds_for_PCP_esti}:
$$t_{n,j}(b^{\star},c^{\star})=t_{n}(x^{\star},y^{\star}).
$$ 
From the definition of $t_{n,j},$ it follows then that 
$$
t_{n,j}\geq t_{n,j}(b^{\star},c^{\star})=t_{n}(x^{\star},y^{\star}).
$$
Therefore, we conclude that
$$
E\subseteq \{d_{TV}(p(\cdot|x^{\star}),p(\cdot|y^{\star}))
\geq 2t_{n}(x^{\star},y^{\star})\},
$$
so that by the triangle inequality, we obtain that on $E$,
\begin{multline*}
2t_{n}(x^{\star},y^{\star})\leq    d_{TV}(\hat{p}_n(\cdot|x^{\star}),p(\cdot|x^{\star}))\\+d_{TV}(\hat{p}_n(\cdot|y^{\star}),p(\cdot|y^{\star}))+d_{TV}(\hat{p}_n(\cdot|x^{\star}),\hat{p}_n(\cdot|y^{\star})).
\end{multline*}
Hence, on $\{j\notin\hat{\Lambda}_{1,n}\}\cap E$, we have
\begin{equation*}
t_{n}(x^{\star},y^{\star})\leq    d_{TV}(\hat{p}_n(\cdot|x^{\star}),p(\cdot|x^{\star}))\\+d_{TV}(\hat{p}_n(\cdot|y^{\star}),p(\cdot|y^{\star})),
\end{equation*}
implying that
\begin{multline*}
   \bP\left(\{j\notin\hat{\Lambda}_{1,n}\}\cap E\right)\leq \bP(d_{TV}(\hat{p}_n(\cdot|x^{\star}),p(\cdot|x^{\star}))\geq s_n(x^{\star}))\\+\bP(d_{TV}(\hat{p}_n(\cdot|y^{\star}),p(\cdot|y^{\star}))\geq s_n(y^{\star})).
\end{multline*}
From \eqref{ineq_2_proof_thm_1}, it follows then that
\begin{align*}
\bP\left(j\notin\hat{\Lambda}_{1,n},\gamma_{n,j}\leq \delta_j\right)=\bP\left(\{j\notin\hat{\Lambda}_{1,n}\}\cap E\right)\leq  8|A|\left\lceil\frac{\log(\mu (n-d)/\alpha+2)}{\log(1+\varepsilon)}\right\rceil e^{-\alpha},
\end{align*}
concluding the proof of Item 2.

{\bf Proof of Item 3}.
Observe that by combining Items 1 and 2 together with the union bound, we deduce that
\begin{multline*}
    \bP\left(\hat{\Lambda}_{1,n}\neq \Lambda\right)\leq 8|A|\left((|S|-|\Lambda|)(n-d)+|\Lambda|\right)\left\lceil \frac{\log(\mu(n-d)/\alpha+2)}{\log(1+\varepsilon)}\right\rceil e^{-\alpha}
        \\+\sum_{j\in \Lambda}\bP\left(\gamma_{n,j}>\delta_j \right).
\end{multline*}

Hence, to conclude the proof of Item 3, it suffices to show that

\begin{equation}
\label{proof_cor_thm_SSP2_key_ineq}
\bP\left(\gamma_{n,j}>\delta_j \right)\leq
6|A|(|A|-1)\exp\left\{\frac{-\Delta^2(n-d)^2{\bf P}^2_S}{8n(|S|+1)^2}\right\},
\end{equation}
for all $j\in \Lambda$, whenever the sample size $n$ satisfies \eqref{def:min_sample}.

By the union bound, we have that 
\begin{equation}
\label{proof_cor_thm_SSP2_key_ineq4}
\bP\left(\gamma_{n,j}>\delta_j \right)\leq \sum_{b\in A}\sum_{c\in A:c\neq b}\bP\left(t_{n,j}(b,c)>\delta_j/2 \right),
\end{equation}
and for each $b,c\in A$ with $b\neq c$,
\begin{equation}
\label{prof_thm_PCP_estimator:ineq_1}
\bP\left(t_{n,j}(b,c)>\delta_j/2 \right)\leq \bP\left(t_{n}(x,y)>\delta_j/2 \right) 
\leq \bP\left(s_n(x)>\delta_j/4\right)+\bP\left(s_n(y)>\delta_j/4\right),
\end{equation}
for any $(x,y)\in \cC_j(b,c)$.

By using again the union bound, we can deduce that for each in $x\in A^{S}$,
\begin{equation*}
\bP\left(s_n(x)>\delta_j/4\right)\leq \bP\left(\sum_{a\in A}\sqrt{\frac{\alpha(1+\varepsilon)\hat{V}_n(a,x)}{2\bar{N}_{n}(x)}}>\delta_j/8\right)
+\bP\left(\frac{\alpha}{6\bar{N}_{n}(x)}>\frac{\delta_j}{8|A|}\right).
\end{equation*}
and also that
\begin{multline*}
\bP\left(\sum_{a\in A}\sqrt{\frac{\alpha(1+\varepsilon)\hat{V}_n(a,x)}{2\bar{N}_{n}(x)}}>\delta_j/8\right)\leq \bP\left(\sum_{a\in A}\sqrt{\frac{\alpha(1+\varepsilon)\hat{p}_n(a|x)\mu}{2(\mu-\psi(\mu))\bar{N}_{n}(x)}}>\delta_j/16\right)\\
+\bP\left(\frac{|A|\alpha}{\bar{N}_{n}(x)}\sqrt{\frac{(1+\varepsilon)}{2(\mu-\psi(\mu))}}>\delta_j/16\right).
\end{multline*}

By applying Proposition \ref{prop:gaussian_concentration_inequality} with $u_1={\bf P}(x)-(4|A|\alpha)/(3\delta_j(n-d))$ and $u_2={\bf P}(x)-(16|A|\alpha\sqrt{(1+\varepsilon)/2(\mu-\psi(\mu))})/(\delta_j(n-d))$, one can show that
\begin{multline*}
\bP\left(\frac{|A|\alpha}{6\bar{N}_{n}(x)}>\delta_j/8\right)+\bP\left(\frac{|A|\alpha}{\bar{N}_{n}(x)}\sqrt{\frac{(1+\varepsilon)}{2(\mu-\psi(\mu))}}>\delta_j/16\right)
\\ \leq
2\exp\left\{-\frac{\Delta^2(n-d)^2}{2n(|S|+1)^2}\left({\bf P}(x)-\frac{16|A|\alpha}{\delta_j(n-d)}\sqrt{\frac{(1+\varepsilon)}{2(\mu-\psi(\mu))}}\right)^2\right\},
\end{multline*}
as long as
$(n-d)>\frac{16|A|\alpha}{\delta_j{\bf P}(x)}\sqrt{\frac{(1+\varepsilon)}{2(\mu-\psi(\mu))}}$.

By using Jensen inequality, one can verify that
\begin{equation*}
\bP\left(\sum_{a\in A}\sqrt{\frac{\alpha(1+\varepsilon)\hat{p}_n(a|x)\mu}{2(\mu-\psi(\mu))\bar{N}_{n}(x)}}>\delta_j/16\right)\leq \bP\left(\bar{N}_{n}(x)< \frac{128\alpha(1+\varepsilon)\mu|A|}{\delta^2_j(\mu-\psi(\mu))}\right),    
\end{equation*}
so that by Proposition \ref{prop:gaussian_concentration_inequality} with $u_3={\bf P}(x)-(128\alpha(1+\varepsilon)\mu|A|)/(\delta^2_j(\mu-\psi(\mu)))$, it follows that
\begin{multline*}
\bP\left(\sum_{a\in A}\sqrt{\frac{2\alpha(1+\varepsilon)\hat{p}_n(a|x)\mu}{(\mu-\psi(\mu))\bar{N}_{n}(x)}}>\delta_j/16\right)\leq\\
\leq
\exp\left\{-\frac{\Delta^2(n-d)^2}{2n(|S|+1)^2}\left({\bf P}(x)-\frac{128|A|\alpha\mu(1+\varepsilon)}{\delta^2_j(\mu-\psi(\mu))(n-d)}\right)^2\right\},
\end{multline*}
whenever
$(n-d)>(128|A|\alpha\mu(1+\varepsilon))/(\delta^2_j{\bf P}(x)(\mu-\psi(\mu)))$.

Therefore, we have shown  that for any $x\in A^{S}$,
\begin{equation}
\label{proof_cor_thm_SSP2_key_ineq3}
\bP\left(s_n(x)>\delta_j/4\right)\leq \\
3\exp\left\{-\frac{\Delta^2(n-d)^2{\bf P}^2(x)}{8n(|S|+1)^2}\right\},
\end{equation}
as long as
$$
(n-d)\geq 2\left(16\frac{|A|\alpha}{\delta^2_j{\bf P}(x)}\left(\frac{8\mu(1+\varepsilon)}{(\mu-\psi(\mu))}+\sqrt{\frac{(1+\varepsilon)}{2(\mu-\psi(\mu))}}\right)\right).
$$

Now, considering $b^{*,j},c^{*,j}\in A$ with $b^*\neq c^*$ and $(x^{*,j},y^{*,j})\in\cC_j(a^{*,j},b^{*,j})$ such that
$$\min_{b,c\in A:b\neq c}\max_{(x,y)\in\cC_j(a,b)}({\bf P}(x)\wedge{\bf P}(y))=({\bf P}(x^{*,j})\wedge{\bf P}(y^{*,j})),$$
we can deduce from \eqref{prof_thm_PCP_estimator:ineq_1} and \eqref{proof_cor_thm_SSP2_key_ineq3} that 
$$
\bP\left(t_{n,j}(b,c)>\delta_j/2 \right)\leq 6\exp\left\{-\frac{\Delta^2(n-d)^2({\bf P}(x^{*,j})\wedge{\bf P}(y^{*,j}))^2}{8n(|S|+1)^2}\right\},
$$
whenever
$$
(n-d)\geq 2\left(16\frac{|A|\alpha}{\delta^2_j{\bf P}(x^{*,j})\wedge {\bf P}(y^{*,j})}\left(\frac{8\mu(1+\varepsilon)}{(\mu-\psi(\mu))}+\sqrt{\frac{(1+\varepsilon)}{2(\mu-\psi(\mu))}}\right)\right).
$$
Since ${\bf P}(x^{*,j})\wedge {\bf P}(y^{*,j})\geq {\bf P}_S$ for all $j\in\Lambda^*$, we can take 
$$C=C(\mu,\varepsilon)=32\left(\frac{8\mu(1+\varepsilon)}{(\mu-\psi(\mu))}+\sqrt{\frac{(1+\varepsilon)}{2(\mu-\psi(\mu))}}\right),$$
to deduce that \eqref{proof_cor_thm_SSP2_key_ineq} is indeed satisfied whenever
$$
(n-d)\geq \frac{C|A|\alpha}{\delta^2_{min}{\bf P}_S},
$$
concluding the proof.
\end{proof}

\subsubsection{Proof of Corollary \ref{cor:consistency_PCP_estimator}}
\label{proof:cor_conist_PCP_Esti}

\begin{proof}[Proof of Corollary \ref{cor:consistency_PCP_estimator}]
Notice that Assumptions \ref{Ass:MTD_stationary} and \ref{Ass:Concentration_inequalities} are satisfied for all values of $n$, since $p_{min,n}\geq p^{\star}_{min}$ and $\Delta_n\geq \Delta^{\star}_{min}$ for positive constants  $p^{\star}_{min}$ and $\Delta^{\star}_{min}$. Hence, the result follows immediately from Theorem \ref{thm:consistency_PCP_estimator}-Item 3 and Remark \ref{rmk:PCP_consistency}-Item (c).
\end{proof}

\subsection{Proofs of Section \ref{Sec:FSC_estimator}}
\label{Sec:proof_prop_struc_result}
\subsubsection{Proof of Proposition \ref{Prop:structure_result}}
In this section we prove Proposition \ref{Prop:structure_result}. To that end, we need some auxiliary results. The first auxiliary result is the following.
Recall that we write $\mCov_{x_{S}}(X_{0},m_k(X_{k}))$ to denote the conditional covariance between the random variables $X_{0}$ and $m_k(X_{k})$ given that $X_{S}=x_{S}$, where $m_k$ is defined \eqref{def:Conditional_average_pj_given_b}.

\begin{lemma}
\label{Prop:conditional_cov}
For each $S\subseteq \llb -d, -1\rrb$, $k\in S^c$ and $x_{S}\in A^{S}$, the following identity holds:
\begin{equation}
\label{identify_conditional_cov}
\mCov_{x_{S}}(X_0,m_k(X_{k}))=\sum_{j\in\Lambda\setminus S}\lambda_j\mCov_{x_{S}}(m_j(X_{j}),m_k(X_{k})).    
\end{equation}
\end{lemma}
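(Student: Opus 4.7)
The plan is to expand both sides using the MTD structure combined with iterated conditioning, and then discard the contributions of the irrelevant lags. The key observation is that, by the MTD representation \eqref{def:transition_probabilities}, the conditional expectation of $X_0$ given the full past admits the additive form
\[
\bE(X_0\mid X_{-d:-1}) \;=\; \lambda_0 m_0 + \sum_{j=-d}^{-1}\lambda_j m_j(X_{j}),
\]
where $m_0 = \sum_{a\in A} a\, p_0(a)$ and $m_j$ is as in \eqref{def:Conditional_average_pj_given_b}. This is the only place in the argument where the MTD structure enters.

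\textbf{Step 1: reduce to covariances of the $m_j(X_j)$.} Since $m_k(X_{k})$ is a function of the past, I would apply the tower property with respect to $\cF_{-d:-1}=\sigma(X_{-d:-1})$ to obtain
\[
\bE_{x_S}\!\left(X_0\, m_k(X_{k})\right)
= \bE_{x_S}\!\left(m_k(X_{k})\,\bE(X_0\mid X_{-d:-1})\right),
\]
and substitute the additive formula above. Splitting the sum into $j\in S$ and $j\in \llb -d,-1\rrb\setminus S$, the terms with $j\in S$ become constants $\lambda_j m_j(x_j)$ (because we are conditioning on $X_S=x_S$) and so factor out of the conditional expectation. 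The same splitting applied to $\bE_{x_S}(X_0)$ produces the same constant terms times $1$. Therefore, upon forming $\mCov_{x_S}(X_0,m_k(X_k)) = \bE_{x_S}(X_0 m_k(X_k)) - \bE_{x_S}(X_0)\bE_{x_S}(m_k(X_k))$, all $j\in S$ contributions cancel and I am left with
\[
\mCov_{x_S}(X_0,m_k(X_{k})) \;=\; \sum_{j\in \llb -d,-1\rrb\setminus S}\lambda_j\,\mCov_{x_S}(m_j(X_{j}),m_k(X_{k})).
\]

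\textbf{Step 2: drop the irrelevant lags.} To replace the index set $\llb -d,-1\rrb\setminus S$ by $\Lambda\setminus S$, I would argue that for any $j\notin\Lambda$ the summand vanishes. Indeed, by definition of $\Lambda$ and formula \eqref{def:index_weights}, $j\notin\Lambda$ means $\delta_j=\lambda_j\max_{b,c}d_{TV}(p_j(\cdot|b),p_j(\cdot|c))=0$, so either $\lambda_j=0$ (in which case the term is trivially zero) or else $p_j(\cdot|b)$ does not depend on $b$, which forces $m_j$ to be constant on $A$ and hence $\mCov_{x_S}(m_j(X_{j}),m_k(X_{k}))=0$. Combining the two steps gives \eqref{identify_conditional_cov}.

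\textbf{Main obstacle.} There is no real obstacle: the proof is a direct consequence of the MTD representation plus iterated conditioning. The only subtlety to be careful about is the bookkeeping showing that the $j\in S$ contributions (including the $\lambda_0 m_0$ piece) are constant when we condition on $X_S=x_S$ and therefore cancel exactly between the two terms defining the covariance.
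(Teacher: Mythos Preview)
Your proof is correct and follows essentially the same approach as the paper: expand $\bE_{x_S}(X_0)$ and $\bE_{x_S}(X_0 m_k(X_k))$ via the MTD representation, observe that the $j\in S$ contributions (together with $\lambda_0 m_0$) are constants that cancel in the covariance, and retain only the $j\in\Lambda\setminus S$ terms. The only cosmetic difference is that you invoke the tower property with respect to $\sigma(X_{-d:-1})$ to obtain the decomposition in one line, whereas the paper writes out the sums over configurations explicitly; and you separate out the ``drop irrelevant lags'' step, while the paper works over $\Lambda$ from the start.
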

\begin{remark}
In \eqref{identify_conditional_cov}, we use the convention that $\sum_{j\in \emptyset}\lambda_j\mCov_{x_{S}}(m_j(X_{j}),m_k(X_{k}))=0$.
\end{remark}

\begin{proof}[Proof of Lemma \ref{Prop:conditional_cov}]
\label{proof_conditional_cov}
 
Observe that if $\Lambda\subseteq S$, then the both sides of \eqref{identify_conditional_cov} are $0$, so that the result holds immediately in this case.

Now suppose that $\Lambda\not\subseteq S$. In this case, to shorten the notation, let us write
\[
{\bf P}_{x_{S}}(x_{\Lambda\setminus S})=\bP_{x_{S}}(X_{\Lambda\setminus S}=x_{\Lambda\setminus S}), \ \text{for} \  x_{\Lambda\setminus S}\in A^{\Lambda\setminus S}. 
\]

We want to compute $$\mCov_{x_{S}}(X_0,m_k(X_{k}))=\bE_{x_{S}}(X_0 m_k(X_{k}))-\bE_{x_{S}}(X_0)\bE_{x_{S}}(m_k(X_{k})).$$ 
We first compute $\bE_{x_{S}}(X_0)$. 
To that end, write
\begin{equation*}
\bE_{x_{S}}(X_0)=\sum_{a\in A}a\bP_{x_{S}}(X_0=a),
\end{equation*}
and observe that for each $a\in A$,
\begin{align*}
\bP_{x_{S}}(X_0=a)&=\sum_{x_{\Lambda\setminus S}\in A^{\Lambda\setminus S}}{\bf P}_{x_{S}}(x_{\Lambda\setminus S})p(a|x_{S}x_{\Lambda\setminus S})\nonumber\\
&=\lambda_0p_0(a)+\sum_{j\in\Lambda\cap S}\lambda_jp_j(a|x_{j}) +\sum_{j\in \Lambda\setminus S}\lambda_j\sum_{x_{\Lambda\setminus S}\in A^{\Lambda\setminus S}}{\bf P}_{x_{S}}(x_{\Lambda\setminus S})p_j(a|x_{j})\nonumber\\
&=\lambda_0p_0(a)+\sum_{j\in\Lambda\cap S}\lambda_jp_j(a|x_{j}) +\sum_{j\in \Lambda\setminus S}\lambda_j\bE_{x_{S}}(p_j(a|X_{j}))\nonumber,
\end{align*}
where in the second equality we have used the definition of the transition probabilities \eqref{def:transition_probabilities}. 
Hence, we have that
$$
\bE_{x_{S}}(X_0)=\lambda_0m_0+\sum_{j\in \Lambda\cap S}\lambda_jm_j(x_{j})+\sum_{j\in\Lambda\setminus S}\lambda_j\bE_{x_{S}}(m_j(X_{j})),
$$
where $m_0=\sum_{a\in A}ap_0(a)$.

As a consequence of the above equality, it follows that
\begin{multline}
\label{Id_1_proof_cov_condicional}
\bE_{x_{S}}(X_0)\bE_{x_{S}}(m_k(X_{k}))=\left [\lambda_0m_0+\sum_{j\in\Lambda\cap S}\lambda_jm_j(x_{j}) \right]\bE_{x_{S}}(m_k(X_{k}))\\+\sum_{j\in\Lambda\setminus S}\lambda_j\bE_{x_{S}}(m_j(X_{j}))\bE_{x_{S}}(m_k(X_{k})).   
\end{multline}
We now compute $\bE_{x_{S}}(X_0m_k(X_{k})).$ We consider only the case $k\in\Lambda$, the other is treated similarly. In this case, we first write
\begin{equation}
\label{Id_2_proof_cov_condicional}
\bE_{x_{S}}(X_0m_k(X_{k}))=\sum_{a\in A}\sum_{b\in A}am_k(b)\bP_{x_{S}}(X_0=a,X_{k}=b),
\end{equation}
and then we proceed similar as above to deduce that for each $a,b\in A$,
\begin{multline}
\label{Id_3_proof_cov_condicional}
\bP_{x_{S}}(X_0=a,X_{k}=b)=\sum_{x_{\Lambda\setminus S}\in A^{\Lambda\setminus S}}{\bf P}_{x_{S}}(x_{\Lambda\setminus S})p(a|x_{S}x_{\Lambda\setminus S})1\{x_{k}=b\}\\  =\left [\lambda_0p_0(a)+\sum_{j\in\Lambda\cap S}\lambda_jp_j(a|x_{j}) +\lambda_kp_k(a|b)\right]\bP_{x_{S}}(X_{k}=b)\\
+\sum_{j\in\Lambda\setminus (S\cup\{k\})}\lambda_j\sum_{c\in A}p_j(a|c)\bP_{x_{S}}(X_{j}=c,X_{k}=b).
\end{multline}
where in the second equality we have used the definition of the transition probabilities \eqref{def:transition_probabilities}.
Combining \eqref{Id_2_proof_cov_condicional} and \eqref{Id_3_proof_cov_condicional}, we deduce that
\begin{multline}
\label{Id_4_proof_cov_condicional}
\bE_{x_{S}}(X_0m_k(X_{k}))=\left [\lambda_0m_0+\sum_{j\in\Lambda\cap S}\lambda_jm_j(x_{j}) \right]\bE_{x_{S}}(m_k(X_{k}))+\lambda_k\bE_{x_{S}}(m^2_k(X_{k}))\\+\sum_{j\in\Lambda\setminus (S\cup\{k\})}\lambda_j\bE_{x_{S}}(m_k(X_{k})m_j(X_{j}))\\
=\left [\lambda_0m_0+\sum_{j\in\Lambda\cap S}\lambda_jm_j(x_{j}) \right]\bE_{x_{S}}(m_k(X_{k}))\\+\sum_{j\in\Lambda\setminus S}\lambda_j\bE_{x_{S}}(m_k(X_{k})m_j(X_{j})).
\end{multline}
Putting together the identities \eqref{Id_1_proof_cov_condicional}
and \eqref{Id_4_proof_cov_condicional}, we then conclude that
\begin{multline}
\mCov_{x_{S}}(X_0m_k(X_{k}))=\sum_{j\in\Lambda\setminus S}\lambda_j\left(\bE_{x_{S}}(m_j(X_{j})m_k(X_{k}))\right.\\ \left. -\bE_{x_{S}}(m_j(X_{j}))\bE_{x_{S}}(m_k(X_{k})\right),
\end{multline}
and the result follows.
\end{proof}

The next auxiliary result is the following.

\begin{lemma}
\label{key_lemma}
Suppose Assumptions \ref{Ass:MTD_stationary} and \ref{Ass:non_determinist_conditional_averages} hold.
Then there exists a constant $\kappa'>0$ such that the following property holds: for any $S\subseteq \llb -d, -1\rrb$ such that $\Lambda\not\subseteq S$, we have
$$
\sum_{j\in \Lambda\setminus S}\sum_{k\in \Lambda\setminus S}\lambda_j\lambda_k\bE(\mCov_{X_{S}}(m_j(X_{j}),m_k(X_{k})))\geq \kappa'. 
$$
\end{lemma}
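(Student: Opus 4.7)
The plan is to recognize the left-hand side as an expected conditional variance and then establish strict positivity via the two hypotheses. Setting $T := \Lambda \setminus S$, which is non-empty by hypothesis, and defining $f : A^T \to \bR$ by $f(x_T) := \sum_{j \in T} \lambda_j m_j(x_j)$, the elementary bilinear identity
$$
\sum_{j,k \in T} \lambda_j \lambda_k \, \mCov_{x_S}\!\left(m_j(X_j), m_k(X_k)\right) = \mathrm{Var}_{x_S}\!\left(f(X_T)\right)
$$
holds for every $x_S \in A^S$. Taking expectation over $X_S$, the quantity to be bounded from below becomes $\bE\!\left[\mathrm{Var}_{X_S}(f(X_T))\right] \ge 0$, so the task reduces to showing strict positivity.

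Next, I would show that $f$ is not constant on $A^T$. Pick any $j \in T$; by the formula $\delta_j = \lambda_j \max_{b,c} d_{TV}(p_j(\cdot|b), p_j(\cdot|c))$ in \eqref{def:index_weights}, the condition $j \in \Lambda$ forces $\lambda_j > 0$. By Assumption \ref{Ass:non_determinist_conditional_averages}, there exist $b^{\star}, c^{\star} \in A$ with $m_j(b^{\star}) \neq m_j(c^{\star})$, so changing only the $j$-th coordinate of any $x_T$ between $b^{\star}$ and $c^{\star}$ shifts $f$ by $\lambda_j(m_j(b^{\star}) - m_j(c^{\star})) \neq 0$. Hence $f$ attains at least two distinct values on $A^T$.

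I then invoke Assumption \ref{Ass:MTD_stationary}: because the MTD has full support, for every $x_S \in A^S$ the conditional distribution of $X_T$ given $X_S = x_S$ assigns strictly positive mass to every element of $A^T$. Since $f$ is non-constant, $f(X_T)$ is therefore not almost surely constant under this conditional law, yielding $\mathrm{Var}_{x_S}(f(X_T)) > 0$ for every $x_S$. As $\bP(X_S = x_S) > 0$ for each $x_S$ as well (again by full support), summing over the finite set $A^S$ gives $\bE\!\left[\mathrm{Var}_{X_S}(f(X_T))\right] > 0$ for the given $S$.

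Finally, to extract a single constant $\kappa'$ valid for all admissible $S$, I note that the family $\{S \subseteq \llb -d, -1 \rrb : \Lambda \not\subseteq S\}$ is finite, and define $\kappa'$ to be the minimum of the strictly positive values $\bE\!\left[\mathrm{Var}_{X_S}(f(X_T))\right]$ over this finite family. I expect no substantial obstacle: the variance identity collapses the double sum to an obvious non-negative quantity, and Assumptions \ref{Ass:MTD_stationary} and \ref{Ass:non_determinist_conditional_averages} together rule out the only degenerate scenario. The only caveat is that this approach produces existence without an explicit rate — obtaining the quantitative bound $\kappa' \geq p_{min}^2 \Gamma_1 \min_{b \neq c}|b-c|^2 \tilde{\delta}_{min}/(2\sqrt{|\Lambda|})$ used in Proposition \ref{Prop:structure_result} would require the weak-dependence Assumption \ref{ass:lower_bound_kappa}, which is outside the hypotheses of this lemma.
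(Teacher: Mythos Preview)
Your proof is correct and follows essentially the same approach as the paper: both recognize the double sum as $\bE[\mathrm{Var}_{X_S}(\sum_{j\in\Lambda\setminus S}\lambda_j m_j(X_j))]$, use Assumption~\ref{Ass:non_determinist_conditional_averages} together with $\lambda_j>0$ to show the inner function is non-constant, invoke full support (Assumption~\ref{Ass:MTD_stationary}) to force the conditional variance to be strictly positive, and then take a minimum over the finitely many admissible $S$. The paper presents the positivity step by contradiction rather than directly, but the content is the same.
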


\begin{proof}[Proof of Lemma \ref{key_lemma}]
\label{proof_key_lemma}
It suffices to show that for $S\subseteq \llb -d, -1\rrb$ such that $\Lambda\not\subseteq S$, we have
$$
\sum_{j\in \Lambda\setminus S}\sum_{k\in \Lambda\setminus S}\lambda_j\lambda_k\bE(\mCov_{x_{S}}(m_j(X_{j}),m_k(X_{k})))>0.
$$
Suppose that this is not the case. Then,  
\begin{align*}
0&=\sum_{j\in \Lambda\setminus S}\sum_{k\in \Lambda\setminus S}\lambda_j\lambda_k\bE(\mCov_{x_{S}}(m_j(X_{j}),m_k(X_{k})))\\
&=\sum_{j\in \Lambda\setminus S}\sum_{k\in \Lambda\setminus S}\lambda_j\lambda_k\mCov\left(m_j(X_{j})-\bE_{X_{S}}(m_j(X_{j})),m_k(X_{k})-\bE_{X_{S}}(m_k(X_{k}))\right)\\
&=\text{Var}\left(\sum_{j\in \Lambda\setminus S}\lambda_j(m_j(X_{j})-\bE_{X_{S}}(m_j(X_{j})))\right),  
\end{align*}
so that $\bP$-almost surely,
$$
\sum_{j\in \Lambda\setminus S}\lambda_j(m_j(X_{j})-\bE_{X_{S}}(m_j(X_{j})))=0.
$$
This implies that $\bP$-almost surely,
$$
\sum_{j\in \Lambda\setminus S}\lambda_jm_j(X_{j})=\bE_{X_{S}}\left(\sum_{j\in \Lambda\setminus S}\lambda_jm_j(X_{j})\right),
$$
or equivalently, 
$$
\sum_{j\in \Lambda\setminus S}\lambda_jm_j(X_{j})=f(X_{S}), \ \bP\text{-a.s.},
$$
for some function $f:A^{S}\to \bR.$ 

Now take any configuration $x_{S}\in A^{S}$ and consider the event $A=\{X_{S}=x_{S}\}$. From the above identity, it follows that $\bP\text{-a.s.}$,
$$
1_{A}\sum_{j\in \Lambda\setminus S}\lambda_jm_j(X_{j})=1_{A}f(x_{S}).
$$
Finally, take any configuration $x_{\Lambda\setminus S}\in A^{\Lambda\setminus S}$ such that
$$
\sum_{j\in \Lambda\setminus S}\lambda_jm_j(x_{j})\neq f(x_{S}),
$$
and let $B=\{X_{\Lambda\setminus S}=x_{\Lambda\setminus S}\}$.
Such a configuration must exist by Assumption \ref{Ass:non_determinist_conditional_averages}.
As a consequence, we have that
$\bP\text{-a.s.}$,
$$
1_{A\cap B}\sum_{j\in \Lambda\setminus S}\lambda_jm_j(x_{j})=1_{A\cap B}f(x_{S}),
$$
implying that 
$$
\bP(A\cap B)\sum_{j\in \Lambda\setminus S}\lambda_jm_j(x_{j})=\bP(A\cap B)f(x_{S}).
$$
By Assumption \ref{Ass:MTD_stationary}, we have $\bP(A\cap B)={\bf P}(x_{\Lambda})>0$ so that the identify above would imply that
$$
\sum_{j\in \Lambda\setminus S}\lambda_jm_j(x_{j})=f(x_{S}),
$$
which is a contradiction. Therefore, we must have $$\text{Var}\left(\sum_{j\in \Lambda\setminus S}\lambda_j(m_j(X_{j})-\bE_{X_{S}}(m_j(X_{j})))\right)>0,$$
and the result follows.
\end{proof}

We also need the following result.

\begin{lemma}
\label{lema:cov_X0_mkXk_upper_bound}
For each $S\subseteq \llb -d, -1\rrb$, $k\in S^c$ and $x_{S}\in A^{S}$, the following identity holds:
\begin{equation}
\label{cov_X0_mkXk_upper_bound}
|\mCov_{x_{S}}(X_0,m_k(X_{k}))|\leq \|m_k\|_{Lip}|\mCov_{x_{S}}(X_0,X_{k})|,
\end{equation}
where $m_k$ and $\|m_k\|_{Lip}$ are defined \eqref{def:Conditional_average_pj_given_b} and \eqref{def:delta_min_and_} respectively.
\end{lemma}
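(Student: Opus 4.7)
My plan is to reduce the problem to comparing covariances of real-valued functions of the single random variable $X_k$, then exploit the Lipschitz control of $m_k$ pointwise. Set $g(b) := \bE_{x_S}[X_0 \mid X_k = b]$. Since both $m_k(X_k)$ and $X_k$ are $\sigma(X_k)$-measurable, the tower property yields
\[
\mCov_{x_S}(X_0, m_k(X_k)) = \mCov_{x_S}(g(X_k), m_k(X_k)), \quad \mCov_{x_S}(X_0, X_k) = \mCov_{x_S}(g(X_k), X_k).
\]
Writing each in the symmetric double-sum form, with $w(b,c) := \bP_{x_S}(X_k=b)\bP_{x_S}(X_k=c)$,
\[
\mCov_{x_S}(\phi(X_k), \psi(X_k)) = \frac{1}{2}\sum_{b,c\in A} w(b,c)(\phi(b) - \phi(c))(\psi(b)-\psi(c)),
\]
and inserting $|m_k(b) - m_k(c)| \leq \|m_k\|_{Lip}|b-c|$ factor by factor with the triangle inequality produces the preliminary bound
\[
|\mCov_{x_S}(X_0, m_k(X_k))| \leq \frac{\|m_k\|_{Lip}}{2}\sum_{b,c}w(b,c)|g(b)-g(c)||b-c|.
\]

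The main obstacle, and the step where any clean argument must do real work, is matching this right-hand side with $\|m_k\|_{Lip}|\mCov_{x_S}(X_0, X_k)|$, because the target quantity has the absolute value outside the double sum rather than inside. Passing the absolute value outside would be immediate if the products $(g(b)-g(c))(b-c)$ all shared the same sign, i.e.\ if $g$ were monotone along the natural ordering of $A \subset \bR$. That monotonicity is not automatic in general, so I expect the resolution to rest squarely on the MTD structure: the mixture representation forces
\[
g(b) = C + \lambda_k m_k(b)\mathbf{1}_{\{k\in\Lambda\}} + \sum_{j\in\Lambda\setminus(S\cup\{k\})}\lambda_j\,\bE_{x_S}[m_j(X_j)\mid X_k=b],
\]
where each summand inherits Lipschitz regularity from the underlying mixture components $m_j$.

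Combining this representation with Lemma \ref{Prop:conditional_cov} and its direct analogue for $\mCov_{x_S}(X_0, X_k)$ (which is derived by exactly the same MTD computation), one can expand both sides of the target inequality as $\sum_{j\in\Lambda\setminus S}\lambda_j\mCov_{x_S}(m_j(X_j),\,\cdot\,)$ and attempt to compare them term by term in $j$. At the level of each fixed $j$, one would apply the symmetric formula to $\mCov_{x_S}(m_j(X_j), m_k(X_k))$ and $\mCov_{x_S}(m_j(X_j), X_k)$ — both now covariances of functions of $X_k$ after a further conditioning — and use the Lipschitz bound on $m_k$ together with the structural sign information to recover the inequality. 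The delicate point throughout is this sign-matching, and it is where the proof will need to invoke properties of the MTD mixture rather than a purely analytic Lipschitz/Cauchy--Schwarz bound (the latter only yields a weaker $\sqrt{\mathrm{Var}(X_0)\mathrm{Var}(X_k)}$ control on the right-hand side).
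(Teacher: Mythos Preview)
Your diagnosis of the obstacle is exactly right, and it is worth knowing that the paper's own proof does \emph{not} go through the MTD structure at all: it writes
\[
\mCov_{x_S}(X_0,m_k(X_k))=\sum_{b\in A}\bigl(m_k(b)-m_k(c)\bigr)\,\mCov_{x_S}\!\bigl(X_0,1\{X_k=b\}\bigr)
\]
with $c=\min(A)$, and then replaces each coefficient $m_k(b)-m_k(c)$ by $\|m_k\|_{Lip}(b-c)$ to conclude $\mCov_{x_S}(X_0,m_k(X_k))\le \|m_k\|_{Lip}\mCov_{x_S}(X_0,X_k)$. That replacement is precisely the ``move the absolute value inside'' step you flagged as the obstacle, and it is not legitimate: the weights $C_b:=\mCov_{x_S}(X_0,1\{X_k=b\})$ are signed (indeed $\sum_b C_b=0$), so bounding each $m_k(b)-m_k(c)$ from above by $\|m_k\|_{Lip}(b-c)$ does not bound the sum.

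More than that, the inequality \eqref{cov_X0_mkXk_upper_bound} is actually false for $|A|\ge 3$, even for genuine MTD models. Take $A=\{0,1,2\}$, $d=1$, $\Lambda=\{-1\}$, $S=\emptyset$, and set $p_{-1}(\cdot\,|\,0)=\delta_0$, $p_{-1}(\cdot\,|\,1)=\delta_1$, $p_{-1}(\cdot\,|\,2)=\delta_0$, so that $m_{-1}=(0,1,0)$ and $\|m_{-1}\|_{Lip}=1$. With $\lambda_0=\tfrac12$ and $p_0$ uniform, the stationary law of $X_{-1}$ is $\pi=(\tfrac12,\tfrac13,\tfrac16)$, and one computes $\mCov(X_0,X_{-1})=\tfrac{1}{18}$ while $\mCov(X_0,m_{-1}(X_{-1}))=\tfrac12\mathrm{Var}(m_{-1}(X_{-1}))=\tfrac{1}{9}$, so the claimed bound fails by a factor of two. (For $|A|=2$ the lemma is trivially true with equality, since then $m_k$ is affine in $b$.) So your instinct that the sign-matching issue is the crux was correct; neither your MTD expansion nor the paper's shortcut can close the gap without an additional hypothesis such as monotonicity of $m_k$ on $A$.
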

\begin{proof}[Proof of Lemma \ref{lema:cov_X0_mkXk_upper_bound}]
First observe that $\mCov_{x_{S}}(X_0,m_k(X_{k}))=\mCov_{x_{S}}(X_0,m_k(X_{k})-m_k(c))$, for any $c\in A$. Since,
$$
\mCov_{x_{S}}(X_0,m_k(X_{k})-m_k(c))=\sum_{b\in A}(m_k(b)-m_k(c))\mCov_{x_{S}}(X_0,1\{X_k=b\})
$$
and $|m_k(b)-m_k(c)|\leq \|m_k\|_{Lip}|b-c|$, it follows then that
$$
\mCov_{x_{S}}(X_0,m_k(X_{k}))\leq \|m_k\|_{Lip}\sum_{b\in A}|b-c|\mCov_{x_{S}}(X_0,1\{X_k=b\}). 
$$
By taking $c=\min(A)$, we have that $|b-c|=(b-c)$ for any $b\in A$ and we deduce from the above inequality that
$$
\mCov_{x_{S}}(X_0,m_k(X_{k}))\leq \|m_k\|_{Lip}\mCov_{x_{S}}(X_0,X_k)\leq \|m_k\|_{Lip}|\mCov_{x_{S}}(X_0,X_k)|.  
$$
A similar argument shows that $\mCov_{x_{S}}(X_0,m_k(X_{k}))\geq  -\|m_k\|_{Lip}|\mCov_{x_{S}}(X_0,X_k)|$, concluding the proof.
\end{proof}

Our last auxiliary result is the following.

\begin{lemma}
\label{lema:conditional_cov_explicity_expression}
For each $S\subseteq \llb -d, -1\rrb$ such $\Lambda\not\subset S$, $x_{S}\in A^{S}$ and $j,k\in \Lambda\setminus S$, the following identity holds:
\begin{multline}
\label{identify_conditional_cov_explicity_expression}
\mCov_{x_{S}}(m_j(X_{j}),m_k(X_{k}))=\frac{1}{2}\sum_{b\in A}\sum_{c\in A}\bP_{x_S}(X_k=b)\bP_{x_S}(X_k=c)(m_k(b)-m_k(c))\times\\
\left(\bE_{x_S}(m_j(X_j)|X_k=b)-\bE_{x_S}(m_j(X_j)|X_k=c)\right).
\end{multline}
where $m_j$ is defined \eqref{def:Conditional_average_pj_given_b}.
\end{lemma}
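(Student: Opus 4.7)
The idea is the classical symmetrization identity
$\mCov(U,V) = \tfrac12 \bE[(U-U')(V-V')]$
specialized to the conditional law of $(m_j(X_j), m_k(X_k))$ given $X_S = x_S$, using an independent copy $(X_j', X_k')$ obtained by integrating out $X_{(\Lambda\setminus S)\setminus\{k\}}$ against the same conditional distribution.

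Concretely, I would first condition on $X_k$ in the product $m_j(X_j)m_k(X_k)$ to write
\[
\bE_{x_S}(m_j(X_j) m_k(X_k)) = \sum_{b\in A} \bP_{x_S}(X_k=b)\, m_k(b)\, \bE_{x_S}(m_j(X_j)\mid X_k=b),
\]
and similarly
\[
\bE_{x_S}(m_k(X_k)) = \sum_{b\in A} \bP_{x_S}(X_k=b) m_k(b),\qquad
\bE_{x_S}(m_j(X_j)) = \sum_{c\in A} \bP_{x_S}(X_k=c)\, \bE_{x_S}(m_j(X_j)\mid X_k=c),
\]
where in the last equality I use the tower property, conditioning on $X_k$. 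Substituting these expressions into $\mCov_{x_S}(m_j(X_j),m_k(X_k)) = \bE_{x_S}(m_j(X_j) m_k(X_k)) - \bE_{x_S}(m_j(X_j))\bE_{x_S}(m_k(X_k))$ and using $\sum_c \bP_{x_S}(X_k=c) = 1$ to put everything under a single double sum over $(b,c)$, I obtain
\[
\mCov_{x_S}(m_j(X_j),m_k(X_k)) = \sum_{b,c\in A} \bP_{x_S}(X_k=b)\bP_{x_S}(X_k=c)\, m_k(b) \bigl[\bE_{x_S}(m_j(X_j)\mid X_k=b) - \bE_{x_S}(m_j(X_j)\mid X_k=c)\bigr].
\]

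Finally, I symmetrize by swapping the dummy indices $b\leftrightarrow c$ in this double sum to obtain an equivalent expression with $m_k(c)$ in place of $m_k(b)$ and the sign of the bracket reversed. Averaging the two expressions yields the claimed factor $\tfrac12[m_k(b)-m_k(c)]$ multiplying the same conditional-expectation difference, which is exactly \eqref{identify_conditional_cov_explicity_expression}. There is no serious obstacle here: the identity is a purely algebraic rewriting, and all required conditional probabilities are well defined (and strictly positive if needed) under Assumption \ref{Ass:MTD_stationary}, although positivity plays no role in this specific step.
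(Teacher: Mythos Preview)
Your proof is correct and follows essentially the same approach as the paper: derive the asymmetric form $\sum_{b,c}\bP_{x_S}(X_k=b)\bP_{x_S}(X_k=c)\,m_k(b)\bigl[\bE_{x_S}(m_j(X_j)\mid X_k=b)-\bE_{x_S}(m_j(X_j)\mid X_k=c)\bigr]$ and then symmetrize by swapping $b\leftrightarrow c$ and averaging. The only cosmetic difference is that the paper first expands the covariance through indicator functions before reaching this intermediate expression, whereas you apply the tower property directly; your route is slightly more economical but not substantively different.
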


\begin{proof}[Proof of Lemma \ref{lema:conditional_cov_explicity_expression}]
First notice that
$$
\mCov_{x_{S}}(m_j(X_{j}),m_k(X_{k}))=\sum_{a,b\in A}m_j(a)m_k(b)\mCov_{x_{S}}(1\{X_{j}=a\},1\{X_{k}=b\}).
$$
Now, for any $a,b\in A$, one can check that
\begin{multline*}
\mCov_{x_{S}}(1\{X_{j}=a\},1\{X_{k}=b\})=\sum_{c\in A}\bP_{x_S}(X_k=b)\bP_{x_S}(X_k=c)\\ \times(\bP_{x_S}(X_j=a|X_k=b)-\bP_{x_S}(X_j=a|X_k=c))    
\end{multline*}
Hence, we deduce from the above equalities that
\begin{multline*}
\mCov_{x_{S}}(m_j(X_{j}),m_k(X_{k}))=\sum_{b\in A}\sum_{c\in A}m_k(b)\bP_{x_S}(X_k=b)\bP_{x_S}(X_k=c))\\ \times(\bE_{x_S}(m_j(X_j)|X_k=b)-\bE_{x_S}(m_j(X_j)|X_k=c)).       
\end{multline*}
Interchanging the role of the symbols $b$ and $c$ in the equality above, we obtain that
\begin{multline*}
\mCov_{x_{S}}(m_j(X_{j}),m_k(X_{k}))=-\sum_{c\in A}\sum_{b\in A}m_k(c)\bP_{x_S}(X_k=b)\bP_{x_S}(X_k=c))\\ \times(\bE_{x_S}(m_j(X_j)|X_k=b)-\bE_{x_S}(m_j(X_j)|X_k=c)).       
\end{multline*}
The result follows by combing the last two equalities above.
\end{proof}

We now prove Proposition \ref{Prop:structure_result}.

\begin{proof}[Proof of Proposition \ref{Prop:structure_result}]

We first prove inequality \eqref{lower_bound_bar_nu}. Let us denote $D_{k,S}(a,b,c,x_{S})=\bP_{x_{S}}(X_0=a|X_{k}=b)-\bP_{x_{S}}(X_0=a|X_{k}=c)$, for each $a,b,c\in A$, $x_{S}\in A^{S}$ and $k\notin S$.
With this notation, one can check that for any $x_{S}\in A^{S}$ and $k\notin S$, we have that 
\begin{equation}
\label{identity_proof_cor_structural_result_2}
\mCov_{x_{S}}(X_0,X_{k})=
\frac{1}{2}\sum_{b\in A}\sum_{c\in A}(b-c)w_{k,S}(b,c,x_{S})\sum_{a\in A}aD_{k,S}(a,b,c,x_{S}).
\end{equation}
Now, observe that the triangle inequality
and the equality $$\frac{1}{2}\sum_{a\in A}|D_{k,S}(a,b,c,x_{S})|=d_{k,S}(b,c,x_{S}),$$
imply that
$$
|\mCov_{x_{S}}(X_0,X_{k})|
\leq Diam(A)\|A\|_{\infty}\sum_{b\in A}\sum_{c\in A}w_{k,S}(b,c,x_{S})d_{k,S}(b,c,x_{S}),
$$
so that
$$
\bE\left(|\mCov_{X_{S}}(X_0,X_{k})|\right)\leq Diam(A)\|A\|_{\infty}\bar{\nu}_{k,S},
$$
proving inequality \eqref{lower_bound_bar_nu}.

We now prove \eqref{Key_covariance_inequality}. This is done as follows. In the sequel, we shall write $\lambda 1_{\Lambda\setminus S}$ to denote the vector $\lambda=(\lambda_j)_{j\in\Lambda}$ restricted to the coordinates in $\Lambda\setminus S$: $\lambda 1_{\Lambda\setminus S}=(\lambda_j)_{j\in\Lambda\setminus S}$. With this notation, it follows from Lemma \ref{Prop:conditional_cov} and Lemma \ref{key_lemma}
that for any $S\subseteq \llb -d, -1\rrb$ such that $\Lambda\not\subseteq S$, 
$$
\sum_{k\in\Lambda\setminus S}\lambda_k\bE\left(\mCov_{X_{S}}(X_0,m_k(X_{k}))\right)\geq \kappa'.
$$
By the triangle inequality, it then follows that
$$
\sum_{k\in\Lambda\setminus S}\lambda_k\left|\bE\left(\mCov_{X_{S}}(X_0,m_k(X_{k}))\right|\right)\geq \kappa'.
$$
Now using that
$$
\max_{k\in\Lambda\setminus S} \left|\bE\left(\mCov_{X_{S}}(X_0,m_k(X_{k}))\right)\right|\|\lambda1_{\Lambda\setminus S}\|_1\geq \sum_{k\in\Lambda\setminus S}\lambda_k\left|\bE\left(\mCov_{X_{S}}(X_0,m_k(X_{k}))\right)\right|,
$$
we conclude that
$$
\max_{k\in\Lambda\setminus S} \left|\bE\left(\mCov_{X_{S}}(X_0,m_k(X_{k}))\right)\right|\|\lambda1_{\Lambda\setminus S}\|_1\geq \kappa'. 
$$
By observing that  $1-\lambda_0=\sum_{k\in \Lambda}\lambda_k\geq \|\lambda1_{\Lambda\setminus S}\|_1$,
we conclude from the above inequality that
$$
\max_{k\in\Lambda\setminus S} \left|\bE\left(\mCov_{X_{S}}(X_0,m_k(X_{k}))\right)\right|\geq \kappa'/(1-\lambda_0)>0,
$$
and the result follows from Lemma \ref{lema:cov_X0_mkXk_upper_bound}.

Therefore, it remains to prove \eqref{lower_bound_on_kappa}. To that end, we first use Lemma \ref{Prop:conditional_cov}, Lemma \eqref{lema:cov_X0_mkXk_upper_bound}  and Lemma \ref{lema:conditional_cov_explicity_expression} to obtain that
\begin{multline*}
\sum_{k\in\Lambda\setminus S}\lambda_k\|m_k\|_{Lip}|\mCov_{x_{S}}(X_0,X_{k})|\geq\frac{1}{2} \sum_{k\in\Lambda\setminus S}\sum_{j\in\Lambda\setminus S}\sum_{b\in A}\sum_{c\in A}\lambda_k\lambda_j\bP_{x_S}(X_k=b)\bP_{x_S}(X_k=c)\\\times(m_k(b)-m_k(c))
\left(\bE_{x_S}(m_j(X_j)|X_k=b)-\bE_{x_S}(m_j(X_j)|X_k=c)\right).    
\end{multline*}
Next, we observe that Assumption \ref{ass:lower_bound_kappa} implies that
\begin{equation*}
 \left(1-\sum_{j\in\Lambda\setminus S:j\neq k}\frac{\lambda_j\left(\left|\bE_{x_S}(m_j(X_j)|X_k=b)-\bE_{x_S}(m_j(X_j)|X_k=c)\right|\right)}{\lambda_k|m_k(b)-m_k(c)|}
\right)\geq \Gamma_1,   
\end{equation*}
so that
$$
\sum_{k\in\Lambda\setminus S}\lambda_k\|m_k\|_{Lip}\bE\left(|\mCov_{x_{S}}(X_0,X_{k})|\right)\geq \frac{p^{2}_{min}\Gamma_1}{2}\sum_{k\in\Lambda\setminus S}\lambda^2_k\sum_{b\in A}\sum_{c\in A}(m_k(b)-m_k(c))^2,
$$
where we have also used that $\bP_{x_S}(X_k=b)\geq p_{min}$.
Finally, note that $|m_k(b)-m_k(c)|\geq \min\{|b-c|^2:b\neq c\}\|m_k\|^2_{Lip}$ to obtain that
$$
\max_{k\in\Lambda\setminus S}\bE\left(|\mCov_{x_{S}}(X_0,X_{k})|\right)\sum_{k\in\Lambda\setminus S}\lambda_k\|m_k\|_{Lip}\geq \frac{p^{2}_{min}\Gamma_1}{2\min\{|b-c|^2:b\neq c\}}\sum_{k\in\Lambda\setminus S}\lambda^2_k\|m_k\|^2_{Lip}.
$$
Then, by using Cauchy-Schwartz inequality, we deduce that
$$
\sum_{k\in\Lambda\setminus S}\lambda_k\|m_k\|_{Lip}\leq \sqrt{\sum_{k\in\Lambda\setminus S}\lambda^2_k\|m_k\|^2_{Lip}}\sqrt{|\Lambda\setminus S|}\leq \sqrt{\sum_{k\in\Lambda\setminus S}\lambda^2_k\|m_k\|^2_{Lip}}\sqrt{|\Lambda|}.
$$
The result follows by combining the last two inequalities. 
\end{proof}

\subsubsection{Proof of Theorem \ref{Prop:Correctness_1_FSC_Algo}}
\label{sec:proff_correctnes_1_FSC_algo}

Before starting the proof of Theorem \ref{Prop:Correctness_1_FSC_Algo}, we recall some definitions from Information Theory.
In what follows, for $S\in \llb -d, -1\rrb$ and $j\in \llb -d, -1\rrb$, we write $I(X_0;X_{j}| X_{S})$ to denote the conditional mutual information between $X_0$ and $X_{j}$ given $X_{S}$, defined as
\begin{equation}
\label{def:conditional_mutual_information}
I(X_0;X_{j}| X_{S})=\sum_{x_{S}\in A^{S}}{\bf P}(x_{S})I(X_0;X_{j}| X_{S}=x_{S}),
\end{equation}
where $I(X_0;X_{j}| X_{S}=x_{S}):=I_j(x_{S})$ denotes the conditional mutual information between $X_0$ and $X_{j}$ given $X_{S}=x_{S}$, defined as
\begin{equation}
\label{def:conditional_mutual_information_given_x_s}
I_j(x_{S})=\sum_{a,b\in A}\bP_{x_{S}}(X_0=a,X_{j}=b)\log\left(\frac{\bP_{x_{S}}(X_0=a,X_{j}=b)}{\bP_{x_{S}}(X_0=a)\bP_{x_{S}}(X_{j}=b)}\right).    
\end{equation}
We use the convention that when $S=\emptyset$, the conditional probability $\bP_{x_{s}}$ is the unconditional probability $\bP$. Hence, in this case, the conditional mutual information between $X_0$ and $X_{j}$ is the mutual information between these random variables, denoted $I(X_0;X_{j}):=I_j.$

The entropy $H(X_0)$ of $X_0$ is defined as 
\begin{equation}
H(X_0)=-\sum_{a\in A}\bP(X_0=a)\log(\bP(X_0=a)).    
\end{equation}

To prove Theorem  \ref{Prop:Correctness_1_FSC_Algo} we proceed similarly to \cite{Bresler:15}. During the proof we will need the
the following lemma.

\begin{lemma}
\label{key_lemma_Bresler}
Suppose that the event $G_m(\xi,\ell)$ defined in \eqref{def:Good_event}  holds and let $S\subseteq \llb -d, -1\rrb$ with $|S|\leq \ell$. If $\hat{\nu}_{m,k,S}\geq \tau$ with $k\in S^c$, then  $I(X_0;X_{k}|X_{S})\geq 2(\tau-\xi)^2$.
\end{lemma}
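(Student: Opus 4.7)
The strategy is to chain together three ingredients: a purely deterministic step exploiting the defining property of $G_m(\xi,\ell)$, Pinsker's inequality (to pass from total variation to Kullback--Leibler divergence, and hence to conditional mutual information), and the triangle inequality (to compare the ``pairwise'' total-variation distances appearing in $\nu_{k,S}(x_S)$ with the ``centered'' distances $d_{TV}(P_b,P)$ that appear naturally in the mutual information).

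First, because $|S|\le\ell$ and $k\in S^c$, the very definition of $G_m(\xi,\ell)$ in \eqref{def:Good_event} yields $|\bar{\nu}_{k,S}-\hat{\nu}_{m,k,S}|\le\xi$ on $G_m(\xi,\ell)$. The hypothesis $\hat{\nu}_{m,k,S}\ge\tau$ then gives the deterministic bound $\bar{\nu}_{k,S}\ge\tau-\xi$, so the lemma reduces to a purely population-level statement: produce a quadratic lower bound on $I(X_0;X_k|X_S)$ in terms of $\bar{\nu}_{k,S}$.

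For this population inequality I would fix a past $x_S\in A^S$ and abbreviate $p_b=\mathbb{P}_{x_S}(X_k=b)$, $P_b=\mathbb{P}_{x_S}(X_0\in\cdot\mid X_k=b)$ and $P=\sum_b p_bP_b=\mathbb{P}_{x_S}(X_0\in\cdot)$. By definition $I(X_0;X_k\,|\,X_S=x_S)=\sum_b p_b\,KL(P_b\|P)$, so Pinsker's inequality applied term by term, followed by Jensen's inequality for $t\mapsto t^2$ against the weights $(p_b)$, gives
\begin{equation*}
I(X_0;X_k\,|\,X_S=x_S)\;\ge\;2\sum_b p_b\,d_{TV}(P_b,P)^2\;\ge\;2\Bigl(\sum_b p_b\,d_{TV}(P_b,P)\Bigr)^2.
\end{equation*}
To bring $\nu_{k,S}(x_S)=\sum_{b,c}p_bp_c\,d_{TV}(P_b,P_c)$ into the picture, I would apply the triangle inequality $d_{TV}(P_b,P_c)\le d_{TV}(P_b,P)+d_{TV}(P_c,P)$ and sum against $p_bp_c$, obtaining $\nu_{k,S}(x_S)\le 2\sum_b p_b\,d_{TV}(P_b,P)$. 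Combining the two displays produces the quadratic bound $I(X_0;X_k\,|\,X_S=x_S)\ge \nu_{k,S}(x_S)^2/2$.

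Finally, taking expectation over $X_S$ and applying Jensen's inequality one more time yields $I(X_0;X_k\,|\,X_S)\ge \mathbb{E}[\nu_{k,S}(X_S)^2]/2\ge \bar{\nu}_{k,S}^{\,2}/2$, and substituting the lower bound $\bar{\nu}_{k,S}\ge\tau-\xi$ from the first step closes the argument (up to the precise numerical constant claimed; the route above is the standard Pinsker--Jensen one used in the analogous mutual-information lemma of \citealt{Bresler:15}). The only delicate point is the triangle-inequality/Jensen reduction, since it is the sole place where a constant factor is lost when passing from the ``pairwise'' object $\nu_{k,S}$ to the ``one-against-mixture'' object $\sum_b p_b d_{TV}(P_b,P)$ that is naturally controlled by mutual information.
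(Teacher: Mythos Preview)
Your approach is essentially the same as the paper's: Pinsker to pass from conditional mutual information to total variation, followed by Jensen to handle the averaging over $x_S$. The paper applies Pinsker once to the joint law of $(X_0,X_k)$ versus the product, which yields exactly your quantity $\sum_b p_b\,d_{TV}(P_b,P)$; you apply it term by term and then use Jensen. Both routes deliver the same pointwise bound $\sqrt{I_k(x_S)/2}\ge\sum_b p_b\,d_{TV}(P_b,P)$.

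Your caution about the constant is in fact warranted. At the step linking $\sum_b p_b\,d_{TV}(P_b,P)$ to $\nu_{k,S}(x_S)=\sum_{b,c}p_bp_c\,d_{TV}(P_b,P_c)$, the paper asserts an \emph{equality}, but this does not hold in general: convexity of $d_{TV}$ only gives $\sum_b p_b\,d_{TV}(P_b,P)\le\nu_{k,S}(x_S)$, while your triangle-inequality step gives $\nu_{k,S}(x_S)\le 2\sum_b p_b\,d_{TV}(P_b,P)$, and both inequalities can be strict (e.g.\ take three equiprobable components $P_0,P_1,P_2$ with $P_2=\tfrac12(P_0+P_1)$, so that $d_{TV}(P_2,P)=0$). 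Hence the paper's argument, once corrected, delivers precisely your bound $I(X_0;X_k\mid X_S)\ge(\tau-\xi)^2/2$ rather than the stated $2(\tau-\xi)^2$. This factor-of-four discrepancy is harmless for the downstream Theorem~\ref{Prop:Correctness_1_FSC_Algo}: it only requires adjusting the definition of $\ell_*$ in~\eqref{parameters_consistency} by a factor of~$4$.
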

\begin{proof}
Definition \eqref{def:conditional_mutual_information} together with Jensen inequality implies that for any $j\in S^c$, 
\begin{align*}
\sqrt{\frac{1}{2}I(X_0;X_{j}|X_{S})}&=\sqrt{\frac{1}{2}\sum_{x_{S}\in A^{S}}{\bf P}(x_{S})I_j(x_{S})}\\
&\geq \sum_{x_{S}\in A^{S}}{\bf P}(x_{S})\sqrt{\frac{1}{2}I_j(x_{S})}.
\end{align*}
By Pinsker inequality, it then follows that
\begin{align*}
\sqrt{\frac{1}{2}I_j(x_S)}
&\geq \frac{1}{2}\sum_{a,b\in A}|\bP_{x_S}(X_0=a,X_{j}=b)-\bP_{x_S}(X_0=a)\bP_{x_{S}}(X_{j}=b)|\\
&=\sum_{b\in A}\bP_{x_S}(X_{j}=b)\frac{1}{2}\sum_{a\in A}|\bP_{x_S}(X_0=a|X_{j}=b)-\bP_{x_S}(X_0=a)|\\
&=\sum_{b\in A}\sum_{c\in A}w_{j,S}(b,c,x_S)d_{TV}(\bP_{x_S}(X_0\in\cdot|X_{j}=b),\bP_{x_S}(X_0\in\cdot|X_{j}=c))\\
&=\nu_{j,S}(x_{S}),
\end{align*}
where in the second equality we have used that for any $a,b\in A$,
$$
\bP_{x_S}(X_0=a|X_{j}=b)=\sum_{c\in A}\bP_{x_S}(X_{j}=c)\bP_{x_S}(X_0=a|X_{j}=b),
$$
and also that
$$
\bP_{x_S}(X_0=a)=\sum_{c\in A}\bP_{x_S}(X_{j}=c)\bP_{x_S}(X_0=a|X_{j}=c)).
$$

As a consequence, we deduce that
$$
\sqrt{\frac{1}{2}I(X_0;X_{j}|X_{S})}\geq \sum_{x_S\in A^{S}}{\bf P}(x_S)\nu_{j,S}(x_S)=\bar{\nu}_{j,S}.
$$
Now, on the event $G_m(\xi,\ell)$, we have that $\bar{\nu}_{k,S}\geq \hat{\nu}_{m,k,S}-\xi$
so that
$$
\sqrt{\frac{1}{2}I(X_0;X_{k}|X_{S})}\geq \hat{\nu}_{m,k,S}-\xi\geq\tau-\xi,
$$
where in the rightmost inequality we have used that $\hat{\nu}_{m,k,S}\geq\tau$.
Hence,
$$
I(X_0;X_{j}|X_{S})\geq 2(\tau-\xi)^2,
$$
and the result follows.
\end{proof}

We now prove Theorem \ref{Prop:Correctness_1_FSC_Algo}.

\begin{proof}
Suppose the event $G_m(\xi_*,\ell_*)$ holds and let $\hat{S}_m$ be the set obtained at the end of FS step of Algorithm \ref{def:FSC_algo} with parameter $\ell_*$, where the parameters $\xi_*$ and $\ell_*$  are defined as in \eqref{parameters_consistency}. 
In the sequel, let $S_0=\emptyset$ and $S_k=S_{k-1}\cup\{j_k\}$, where $j_k\in \arg\max_{j\in S^c_{k-1}}\hat{\nu}_{m,j,S_{k-1}}$ for $1\leq k\leq d$, and observe that by construction $\hat{S}_m=S_{\ell_*}$.
We want to show that $\Lambda\subseteq \hat{S}_m$. We argue by contraction. Suppose that $\Lambda$ is not contained in $\hat{S}_{m}$. In this case, it follows that $\Lambda\not\subseteq S_{k}$ for all $1\leq k\leq \ell_*$, and  Proposition \ref{Prop:structure_result}
implies that for all $1\leq k\leq \ell_*,$
$$
\max_{j\in S^c_k}\bar{\nu}_{j,S_k}\geq \frac{\kappa}{\|A\|_{\infty}Diam(A)}=4\xi_*,
$$
where the equality holds by the choice of $\xi_*$.
Since the event $G_m(\xi_*,\ell_*)$ holds and $|S_k|\leq |S_{\ell_*}|=\ell_*$, it follows from the above inequality that
$$
\hat{\nu}_{m,j_{k},S_{k-1}}=\max_{j\in S^c_{k-1}}\hat{\nu}_{m,j,S_{k-1}}\geq 3\xi_*,
$$
for all $1\leq k\leq \ell_*+1.$
By Lemma \ref{key_lemma_Bresler}, we then deduce that $I(X_0,X_{j_k}|X_{S_{k-1}})\geq 8\xi^2_*$ for all $1\leq k\leq \ell_*+1.$

Now, notice that 
\begin{align}
\label{ineq_1_proof_claim_1}
\log_2(|A|)\geq H(X_0)\geq I(X_0;X_{\hat{S}_{m}\cup\{j_{\ell_*+1}\}})=\sum_{k=1}^{\ell_*+1}I(X_0,X_{j_{k}}|X_{S_{k-1}}),
\end{align}
where  we have used Gibbs inequality in the first passage, the fact that the entropy is always larger than the mutual information in the second passage and the Chain Rule in the last passage. The proof of these facts can be found, for instance, in \citep{Cover_Thomas_06}. 

By the choice of $\ell_*=\left\lfloor\log_2(|A|)/8\xi_*^2\right\rfloor$, we have that $\ell_*+1>\log_2(|A|)/8\xi_*^2$ so that it follows from \eqref{ineq_1_proof_claim_1} that  
$$
\log_2(|A|)\geq (\ell_*+1)8\xi^2_*>\log_2(|A|),
$$
a contradiction. Thus, we must have $\Lambda\subseteq \hat{S}_m$ and the result follows.
\end{proof}


\subsubsection{Proof of Theorem \ref{thm:consistency_FSC}}
\label{Sec:proof_consistency_FSC}

To prove Theorem \ref{thm:consistency_FSC} we shall need the following result.
\begin{proposition}
\label{prop:G_holds_high_proba}
Suppose Assumptions  \ref{Ass:MTD_stationary} and  \ref{Ass:Concentration_inequalities} hold, and let $\Delta^{\star}>0$ the quantity defined in Assumption \ref{Ass:Concentration_inequalities}. Then, for any $\xi>0$ and $m>d\geq 2\ell\geq 0$,
\begin{equation}
\label{ineq_for_G}
\bP(G^c_m(\ell,\xi))\leq 2d(\ell+1){d \choose \ell} |A|^{\ell+2}\exp\left\{-\frac{{\xi}^2(m-d)^2(\Delta_*)^2}{18|A|^{2(\ell+2)}m(\ell+2)^2}\right\}.
\end{equation}
\end{proposition}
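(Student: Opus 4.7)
The plan is to reduce to a fixed pair $(S,j)$ by union bound and then control $|\bar\nu_{j,S}-\hat\nu_{m,j,S}|$ through concentration of empirical joint frequencies of strings of length at most $\ell+2$.

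First, since $\mathcal{S}_{d,\ell}$ has at most $(\ell+1)\binom{d}{\ell}$ elements and $|S^c|\leq d$, a union bound gives
\[
\bP(G_m^c(\ell,\xi))\leq d(\ell+1)\binom{d}{\ell}\,\max_{S,j}\bP\bigl(|\bar\nu_{j,S}-\hat\nu_{m,j,S}|>\xi\bigr),
\]
so it suffices to bound the right-most probability by $2|A|^{\ell+2}\exp\{-\xi^2(m-d)^2(\Delta_*)^2/(18 m(\ell+2)^2|A|^{2(\ell+2)})\}$.

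Next, for fixed $(S,j)$, write $T=S\cup\{j,0\}$ with $|T|\leq\ell+2$. The quantity $\bar\nu_{j,S}$, through the definitions of $w_{j,S}$, $d_{j,S}$ and the expectation over $X_S$, is a fixed (analytic) function $F$ of the $|A|^{\ell+2}$ joint probabilities $\{\mathbf P(y_T):y_T\in A^T\}$; the estimator $\hat\nu_{m,j,S}$ is the same function evaluated at the empirical frequencies $\bar N_{m}(y_T)/(m-d)$. Under Assumption \ref{Ass:MTD_stationary} all true probabilities are bounded below by $p_{\min}^{|T|}/|A|^{|T|}$, so on the event $\mathcal{E}_\eta=\{|\bar N_m(y_T)/(m-d)-\mathbf P(y_T)|\leq\eta\text{ for all }y_T\in A^T\}$ with $\eta$ small enough, the map $F$ is Lipschitz with constant of order $|A|^{\ell+2}$ (the ratios defining conditional probabilities and the $d_{\mathrm{TV}}$ factors are each handled by standard quotient/triangle manipulations). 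Concretely, on $\mathcal{E}_\eta$ we get $|\bar\nu_{j,S}-\hat\nu_{m,j,S}|\leq c\,|A|^{\ell+2}\eta$ for an absolute constant $c$. Choosing $\eta=\xi/(c|A|^{\ell+2})$, it remains to estimate $\bP(\mathcal{E}_\eta^c)$.

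For each $y_T\in A^T$, the count $\bar N_m(y_T)$ is a sum of $m-d$ indicators over a window of length $\ell+2$, so by the martingale concentration inequality of the appendix (Proposition \ref{prop:gaussian_concentration_inequality}, which uses $\Delta_*$ via Assumption \ref{Ass:Concentration_inequalities}),
\[
\bP\!\left(\left|\tfrac{\bar N_m(y_T)}{m-d}-\mathbf P(y_T)\right|>\eta\right)\leq 2\exp\!\left\{-\frac{(\Delta_*)^2(m-d)^2\eta^2}{2m(\ell+2)^2}\right\}.
\]
Union-bounding over the $|A|^{\ell+2}$ strings $y_T$ and substituting the value of $\eta$ produces the claimed tail with $|A|^{2(\ell+2)}(\ell+2)^2$ in the denominator of the exponent and an extra $|A|^{\ell+2}$ in the prefactor; combined with the initial union bound over $(S,j)$ the inequality \eqref{ineq_for_G} follows, after absorbing $c$ into the constant $18$.

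The main obstacle is the second step: the map $F$ is not globally Lipschitz because it involves conditional probabilities that blow up when a denominator vanishes. One must therefore carry out the Lipschitz estimate only on the high-probability event $\mathcal{E}_\eta$, using $p_{\min}>0$ to bound the denominators from below. The bookkeeping of how many joint probabilities enter, and how many copies of $|A|^{\ell+2}$ this produces, must be done carefully so that the final exponent has precisely the factor $|A|^{2(\ell+2)}(\ell+2)^2$ rather than a worse dependence.
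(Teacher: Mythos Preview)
Your overall architecture (union bound over $(S,j)$, then a Lipschitz estimate reducing to concentration of empirical joint frequencies on $T=S\cup\{j,0\}$, then Proposition~\ref{prop:gaussian_concentration_inequality}) matches the paper's proof. The gap is in the Lipschitz step.

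You propose to bound the denominators appearing in the conditional probabilities from below via $p_{\min}$, and claim this yields $|\bar\nu_{j,S}-\hat\nu_{m,j,S}|\leq c\,|A|^{\ell+2}\eta$ on $\mathcal E_\eta$ with an \emph{absolute} constant $c$. But a quotient bound of the type $|a/b-\hat a/\hat b|\leq (|a-\hat a|+|a/b|\,|b-\hat b|)/\hat b$ produces a factor $1/\hat b$, and on $\mathcal E_\eta$ the denominator $\hat{\mathbf P}(x_S)$ is only bounded below by something of order $p_{\min}^{|S|}$ (or $p_{\min}/|A|^{|S|}$). Carrying that through gives a Lipschitz constant depending on $p_{\min}$, hence an exponent of the form $\xi^2(m-d)^2\Delta^2/\bigl(c(p_{\min})^2|A|^{2(\ell+2)}m(\ell+2)^2\bigr)$, which is strictly weaker than \eqref{ineq_for_G} and cannot be ``absorbed into the constant $18$''.

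The paper avoids this entirely by an algebraic cancellation (its Claim~1): one writes
\[
\mathbf P(x_S)\,w_{j,S}(b,c,x_S)\,\bP_{x_S}(X_0=a\mid X_j=b)=\bP_{x_S}(X_j=c)\,\bP(X_S=x_S,X_j=b,X_0=a),
\]
so most ratios collapse into joint probabilities. The one remaining ratio $\bP_{x_S}(X_j=c)$ is handled by an add--subtract, and the surviving $1/\hat{\mathbf P}(x_S)$ is multiplied by $\hat\bP(X_S=x_S,X_0=a)$ and then summed over $a$, which gives $1$ and kills the denominator. The outcome is a \emph{global} inequality
\[
|\bar\nu_{j,S}-\hat\nu_{m,j,S}|\leq 3\sum_{x_S,a,b}\bigl|\hat\bP(X_S=x_S,X_j=b,X_0=a)-\bP(X_S=x_S,X_j=b,X_0=a)\bigr|,
\]
valid without any restriction to $\mathcal E_\eta$ and without $p_{\min}$. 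The constant $18=2\cdot 3^2$ in \eqref{ineq_for_G} comes precisely from this $3$ together with the $2$ in Proposition~\ref{prop:gaussian_concentration_inequality}. Once you establish this inequality, your remaining steps (union bound over $|A|^{\ell+2}$ strings and over $(S,j)$) go through verbatim.
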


During the proof of Proposition \ref{prop:G_holds_high_proba} we will make use of the following proposition. For any function $f:A^{\llb 1, m\rrb}\to\bR$, define for each $1\leq j\leq m$,
\begin{equation}
\delta_j(f)=\sup\left\{|f(x_{1:j-1}ax_{j+1:m})-f(x_{1:j-1}bx_{j+1:m})|: a,b\in A, x_{1:m}\in A^{\llb 1, m\rrb}\right\},    
\end{equation}
with the convention that $x_{1:0}=x_{m+1:m}=\emptyset$, $\emptyset ax_{2:m}=ax_{2:m}$ and 
$x_{1:m-1}a=x_{1:m-1}a\emptyset$.
Let $\underline{\delta}(f)=(\delta_1(f),\ldots, \delta_m(f))$ and denote $\|\underline{\delta}(f)\|^2_2=\sum_{j=1}^m\delta^2_j(f)$. In what follows, we write $\bE_{1:m}[f]=\sum_{x_{1:m}\in A^{\llb 1, m\rrb}}\bP(X_{1:m}=x_{1:m})f(x_{1:m})$.

\begin{proposition}[Theorem 3.4. of \citep{CGT_20}]
\label{prop:gaussian_concentration_inequality}
Suppose Assumption \ref{Ass:Concentration_inequalities} holds, that is, $\Delta>0$.
\begin{enumerate}
    \item For any $u>0$ and $f:A^{\llb 1, m\rrb}\to\bR$,
    $$
\bP\left(|f(X_{1:m})-\bE_{1:m}[f]|>u\right)\leq 2\exp\left\{-\frac{2u^2\Delta^2}{\|\underline{\delta}(f)\|^2_2}\right\}.
$$
\item For $m>d$, any $g:A^{S}\times A\to\bR$ with $S\subseteq \llb -d, -1\rrb$ and $u>0$,
 $$
\bP\left(\left|\frac{1}{(m-d)}\sum_{t=d+1}^mg(X_{t+S},X_{t})-\bE_S[g]\right|>u\right)\leq 2\exp\left\{-\frac{u^2(m-d)^2\Delta^2}{2m(|S|+1)^2\|g\|^2_{\infty}}\right\},
$$
where $\bE_S[g]=\sum_{x_{S}\in A^{S}}\sum_{a\in A}\bP(X_{S}=x_{S},X_0=a)g(x_{S},a)$ and $X_{t+S}=(X_{t+j})_{j\in S}$.
\end{enumerate}
\end{proposition}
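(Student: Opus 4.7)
The plan is to follow the classical Doob martingale approach to bounded-differences concentration, adapted to the dependent setting of MTD chains by exploiting the Dobrushin-type contraction encoded in the assumption $\Delta > 0$. Since the statement is cited verbatim as Theorem 3.4 of \citep{CGT_20}, I would formally defer to that reference, but below I sketch the structure of the argument and how the two parts fit together.

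For Part 1, consider the filtration $\mathcal{F}_t = \sigma(X_{1:t})$ for $t = 0, 1, \ldots, m$ and write the telescoping identity
$$f(X_{1:m}) - \bE_{1:m}[f] = \sum_{t=1}^{m} D_t, \qquad D_t := \bE[f(X_{1:m}) \mid \mathcal{F}_t] - \bE[f(X_{1:m}) \mid \mathcal{F}_{t-1}].$$
The sequence $(D_t)$ is a martingale difference sequence with respect to $(\mathcal{F}_t)$, so once a uniform bound $\|D_t\|_\infty \leq c_t$ is in hand, McDiarmid's version of Azuma--Hoeffding yields $\bP(|f - \bE_{1:m}[f]| > u) \leq 2 \exp(-2u^2 / \sum_t c_t^2)$. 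The crucial step is to establish
$$\|D_t\|_\infty \leq \frac{\delta_t(f)}{\Delta},$$
which matches the exponent $-2 u^2 \Delta^2 / \|\underline{\delta}(f)\|_2^2$ after summation. This bound is where the assumption $\Delta > 0$ enters: using the mixture representation \eqref{def:transition_probabilities}, I would couple two versions of the chain that agree on $X_{1:t-1}$ but differ on $X_t$, by letting future coordinates be resampled from the shared component $\lambda_0 p_0$ (or more generally from lag components that do not reach back to time $t$) whenever possible. Each future step contributes at most $\sum_{j \in \Lambda} \delta_j = 1 - \Delta$ to the persistence of the discrepancy, so telescoping the influence of $X_t$ on the later coordinates and weighting by the bounded differences $\delta_s(f)$ gives a geometric-series bound producing the factor $1/\Delta$.

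For Part 2, I would reduce to Part 1 by applying it to the normalized empirical average
$$f(x_{1:m}) = \frac{1}{m-d}\sum_{t=d+1}^{m} g(x_{t+S}, x_t).$$
Perturbing a single coordinate $x_j$ affects only those summands indexed by $t \in \bigl(\{j\} \cup (j - S)\bigr) \cap \llb d+1, m\rrb$, a set of cardinality at most $|S|+1$, and each affected summand changes by at most $2\|g\|_\infty$. Therefore
$$\delta_j(f) \leq \frac{2(|S|+1)\|g\|_\infty}{m-d}, \qquad \|\underline{\delta}(f)\|_2^2 \leq \frac{4 m (|S|+1)^2 \|g\|_\infty^2}{(m-d)^2},$$
and substituting into Part 1 yields the announced bound after cancelling the factor of $4$ against the $2u^2$ in the exponent.

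The main obstacle, and the reason for invoking \citep{CGT_20}, is the sharp coupling estimate $\|D_t\|_\infty \leq \delta_t(f)/\Delta$. A naive propagation of the perturbation through $d$ past coordinates of the MTD could produce an unwanted factor growing with $d$; obtaining the clean dependence on $\Delta$ alone requires exploiting the mixture structure globally rather than step-by-step, which is precisely what the cited reference does. Once that estimate is granted, the remainder of the argument is a standard bounded-differences calculation.
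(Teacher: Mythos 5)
The paper does not prove Proposition \ref{prop:gaussian_concentration_inequality} at all: it is imported verbatim from \citep{CGT_20}, so there is no internal argument to compare yours against. Your route --- a Doob martingale decomposition whose increments are controlled via the Dobrushin-type contraction $\sum_{j\in\Lambda}\delta_j=1-\Delta<1$, followed by reducing Part 2 to Part 1 through the bounded-differences computation $\delta_j(f)\le 2(|S|+1)\|g\|_\infty/(m-d)$ --- is the standard one behind such results, and your Part 2 arithmetic reproduces the stated exponent exactly. One caveat on the step you defer to the reference: the intermediate claim $\|D_t\|_\infty\le\delta_t(f)/\Delta$ cannot hold coordinatewise. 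Take $f$ depending only on $x_m$, so that $\delta_t(f)=0$ for all $t<m$; then $D_t=\bE[f\mid\mathcal{F}_t]-\bE[f\mid\mathcal{F}_{t-1}]$ is generically nonzero for $t<m$ because $X_t$ influences the law of $X_m$, contradicting the claimed bound. The correct intermediate statement bounds the oscillation of $D_t$ by a weighted sum of the \emph{future} increments $\delta_s(f)$, $s\ge t$, with weights given by coupling (influence) coefficients, and the clean factor $1/\Delta$ emerges only after bounding the $\ell^2$ operator norm of the resulting triangular influence matrix --- not from a per-coordinate inequality. Since you explicitly invoke \citep{CGT_20} for precisely this estimate, this is an imprecision in the sketch rather than a gap in the argument as you have framed it; the rest of your proposal is correct and complete.
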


Before starting the proof Proposition \ref{prop:G_holds_high_proba}, we need to introduce some additional notation. For each 
$x\in A^S$ with $S\subseteq \llb -d,-1\rrb$, we write 
\begin{equation}
\label{Def:empirical_marginal}
\hat{{\bf P}}_{m}(x)=\frac{\bar{N}_{m}(x)}{m-d}.   
\end{equation}
In what follows, we write $xa_{V\cup\{0\}}$, with $a\in A$ and $V\subseteq \llb -d, -1\rrb$, to denote the configuration $((xa)_{j})_{j\in V\cup\{0\}}$, defined as 
$$
(xa)_{j}=\begin{cases}
x_{j}, \ \text{for} \ j\in V\\
a, \ \text{for} \ j=0
\end{cases}. 
$$
When $V=S\cup\{k\}$ and $x_{k}=b\in A$, we shall write $xba_{S\cup\{k,0\}}$ instead of $xa_{V\cup\{0\}}$.

With this notation, the empirical version of $\bar{\nu}_{k,S}$ is defined as follows:  
\begin{equation}
\hat{\nu}_{m,k,S}=\sum_{x_{S}\in A^{S}}\hat{{\bf P}}_{m}(x_{S}) \hat{\nu}_{m,k,S}(x_{S})  
\end{equation}
where for $x_{S}\in A^{S}$, we define
\begin{equation}
\hat{\nu}_{m,k,S}(x_{S})=\sum_{b\in A}\sum_{c\in A}\hat{w}_{m,k,S}(b,c,x_{S})\hat{d}_{m,k,S}(b,c,x_{S}),  
\end{equation}
and for $b,c\in A$,
\begin{equation}
\hat{w}_{m,k,S}(b,c,x_{S})=\frac{\hat{{\bf P}}_m(xb_{S\cup\{k\}})}{\hat{{\bf P}}_m(x_{S})}\frac{\hat{{\bf P}}_m(xc_{S\cup\{k\}})}{\hat{{\bf P}}_m(x_{S})},
\end{equation}
and
$$
\hat{d}_{m,k,S}(b,c,x_{S})=\frac{1}{2}\sum_{a\in A}\left|
\hat{\bP}_{x_{S}}(X_0=a|X_{k}=b)-\hat{\bP}_{x_{S}}(X_0=a|X_{k}=c)
\right|,
$$
where for each $b\in A$,
$$
\hat{\bP}_{x_{S}}(X_0=a|X_{k}=b)=\frac{\hat{{\bf P}}_m(xba_{S\cup\{k,0\}})}{\hat{{\bf P}}_m(xb_{S\cup\{k\}})}.
$$

Hereafter, we omit the dependence on $S$ and on $m$, whenever there is no risk of confusion. We now prove Proposition \ref{prop:G_holds_high_proba}. 

\begin{proof}[Proof of Proposition \ref{prop:G_holds_high_proba}]

{\bf Claim 1.} Let $S\subseteq \llb -d, -1\rrb$ with $|S|\leq\ell<d/2$ and take $j\in S^c$. Then,
\begin{equation*}
|\bar{\nu}_{j,S}-\hat{\nu}_{j,S}|
\leq 3 \sum_{x\in A^{S}}\sum_{a\in A}\sum_{b\in A}\left|\hat{\bP}(X_{S}=x,X_{j}=b,X_0=a)-\bP(X_{S}=x,X_{j}=b,X_0=a) \right|.
\end{equation*}

{\bf Proof of the Claim 1.} By applying the triangle inequality twice, one can check that
\begin{multline}
\label{ineq_1_proof_claim_1_prop_concen}
|\bar{\nu}_{j,S}-\hat{\nu}_{j,S}|\leq \frac{1}{2}\sum_{x\in A^{S}}\sum_{a,b,c\in A}\left|{\bf P}(x)w_{j,S}(b,c,x)\left(\bP_{x}(X_0=a|X_{j}=b)-\bP_{x}(X_0=a|X_{j}=c)\right)\right. \\ \left. 
-\hat{{\bf P}}(x)\hat{w}_{j,S}(b,c,x)\left(\hat{\bP}_{x}(X_0=a|X_{j}=b)-\hat{\bP}_{x}(X_0=a|X_{j}=c)\right)\right|.    \end{multline}

Now observe that for fixed $x\in A^{S}$ and $a,b,c\in A$,
$$
{\bf P}(x)w_{j,S}(b,c,x)\bP_{x}(X_0=a|X_{j}=b)=\bP_x(X_{j}=c)\bP(X_{S}=x,X_{j}=b,X_0=a)
$$
and similarly, 
$$
\hat{{\bf P}}(x)\hat{w}_{j,S}(b,c,x)\hat{\bP}_{x}(X_0=a|X_{j}=b)=\hat{\bP}_x(X_{j}=c)\hat{\bP}(X_{S}=x,X_{j}=b,X_0=a).
$$
By using these identities in \eqref{ineq_1_proof_claim_1_prop_concen} and then by applying the triangle inequality, one can deduce that
\begin{multline}
\label{ineq_2_proof_claim_1_prop_concen}
|\bar{\nu}_{j,S}-\hat{\nu}_{j,S}|\leq \sum_{x\in A^{S}}\sum_{a,b,c\in A}\left|\bP_x(X_{j}=c)\bP(X_{S}=x,X_{j}=b,X_0=a)\right. \\ \left. 
-\hat{\bP}_x(X_{j}=c)\hat{\bP}(X_{S}=x,X_{j}=b,X_0=a)\right|.    
\end{multline}

By adding and subtracting the term $\bP_x(X_{j}=c)\hat{\bP}(X_{S}=x,X_{j}=b,X_0=a)$ in the right-hand side of the above inequality and using again the triangle inequality, it follows that
\begin{multline}
\label{ineq_3_proof_claim_1_prop_concen}
\sum_{x\in A^{S}}\sum_{a,b,c\in A}\left|\bP_x(X_{j}=c)\bP(X_{S}=x,X_{j}=b,X_0=a)\right. \\ \left. 
-\hat{\bP}_x(X_{j}=c)\hat{\bP}(X_{S}=x,X_{j}=b,X_0=a)\right|\\
\leq \sum_{x\in A^{S}}\sum_{a,b\in A}|\bP(X_{S}=x,X_{j}=b,X_0=a)-\hat{\bP}(X_{S}=x,X_{j}=b,X_0=a)|\\
+ \sum_{x\in A^{S}}\sum_{a,c\in A}\hat{\bP}(X_{S}=x,X_0=a)|\bP_x(X_{j}=c)-\hat{\bP}_x(X_{j}=c)|.
\end{multline}

By adding and subtracting the term $\bP(X_{S}=x)\bP(X_{S}=x,X_{j}=c)$, we can then check that
\begin{multline}
\label{ineq_4_proof_claim_1_prop_concen}
|\bP_x(X_{j}=c)-\hat{\bP}_x(X_{j}=c)|\leq
\frac{\bP(X_{j}=c)}{\hat{\bP}(X_{S}=x)}|\hat{\bP}(X_{S}=x)-\bP(X_{S}=x)|\\+\frac{1}{\hat{\bP}(X_{S}=x)}|\hat{\bP}(X_{S}=x,X_{j}=c)-\bP(X_{S}=x,X_{j}=c)|.
\end{multline}
From \eqref{ineq_3_proof_claim_1_prop_concen} and \eqref{ineq_4_proof_claim_1_prop_concen}, one deduces that
\begin{multline}
\label{ineq_5_proof_claim_1_prop_concen}
|\bar{\nu}_{j,S}-\hat{\nu}_{j,S}|\leq \sum_{x\in A^{S}}\sum_{a,b\in A}|\bP(X_{S}=x,X_{j}=b,X_0=a)-\hat{\bP}(X_{S}=x,X_{j}=b,X_0=a)|\\
+\sum_{x\in A^{S}}\sum_{c\in A} |\bP(X_{S}=x,X_{j}=c)-\hat{\bP}(X_{S}=x,X_{j}=c)|\\+\sum_{x\in A^{S}}|\bP(X_{S}=x)-\hat{\bP}(X_{S}=x)|.
\end{multline}
Since 
\begin{multline*}
|\bP(X_{S}=x,X_{j}=c)-\hat{\bP}(X_{S}=x,X_{j}=c)|\\
\leq \sum_{a\in A}|\bP(X_{S}=x,X_{j}=c,X_0=a)-\hat{\bP}(X_{S}=x,X_{j}=c,X_0=a)| \end{multline*}
and
\begin{multline*}
|\bP(X_{S}=x)-\hat{\bP}(X_{S}=x)|\\
\leq \sum_{a,c\in A}|\bP(X_{S}=x,X_{j}=c,X_0=a)-\hat{\bP}(X_{S}=x,X_{j}=c,X_0=a)|, 
\end{multline*}
the proof of Claim 1 follows from \eqref{ineq_5_proof_claim_1_prop_concen}.

{\bf Claim 2.} For any $u>0$,
\begin{multline*}
\bP\left(3 \sum_{x\in A^{S}}\sum_{a\in A}\sum_{b\in A}\left|\hat{\bP}(X_{S}=x,X_{j}=b,X_0=a)-\bP(X_{S}=x,X_{j}=b,X_0=a) \right|>u\right)\\
\leq 2|A|^{|S|+2}\exp\left\{-\frac{u^2(m-d)^2\Delta^2}{18|A|^{2(|S|+2)}m(|S|+2)^2}\right\}.     
\end{multline*}

{\bf Proof of Claim 2.} It follows from the union bound and Proposition \ref{prop:gaussian_concentration_inequality}.

\medskip
We now will conclude the proof.  Let $\mathcal{S}_k=\{S\subseteq \llb -d, -1\rrb: |S|=k\}$ and observe that by the union bound
$$
\bP(G^c_m(\xi,\ell))\leq \sum_{k=0}^{\ell}\sum_{S\in \mathcal{S}_k }\sum_{j\in S^c}\bP\left(|\bar{\nu}_{j,S}-\hat{\nu}_{j,S}|>\xi\right).
$$
Combining Claims 1 and 2, it follows that 
$$
\bP\left(|\bar{\nu}_{j,S}-\hat{\nu}_{j,S}|>\xi\right)\leq 2|A|^{|S|+2}\exp\left\{-\frac{{\xi}^2(m-d)(\Delta_*)^2}{18|A|^{2(|S|+2)}m(|S|+2)^2}\right\},
$$
which implies that
$$
\bP(G^c_m(\xi,\ell))\leq 2\sum_{k=0}^{\ell} \binom{d}{k}(d-k) |A|^{k+2}\exp\left\{-\frac{{\xi}^2(m-d)^2\Delta^2}{18|A|^{2(k+2)}m(k+2)^2}\right\}.
$$
Since $\ell\leq d/2$, we can use that $\binom{d}{k}\leq \binom{d}{\ell}$  for all $0\leq k\leq \ell$
to obtain that
\[
\bP(G^c_m(\xi,\ell))\leq 2d (\ell+1)\binom{d}{\ell} |A|^{\ell+2}\exp\left\{-\frac{{\xi}^2(m-d)(\Delta_*)^2}{18|A|^{2(\ell+2)}m(\ell+2)^2}\right\},
\]
and the result follows.
\end{proof}

We now prove Theorem \ref{thm:consistency_FSC}.

\begin{proof}[Proof of Theorem \ref{thm:consistency_FSC}]

First, observe that by Theorem \ref{Prop:Correctness_1_FSC_Algo},
\begin{equation}
\label{proof_consi_FSC_ineq_1}
\bP(\hat{\Lambda}_{2,n}\neq\Lambda)\leq \bP(G^c_m(\xi_*,\ell_*))+\bP(\Lambda\subseteq \hat{S}_m, \hat{\Lambda}_{2,n}\neq\Lambda)\end{equation}

Next, notice that the second term on the right hand side of \eqref{proof_consi_FSC_ineq_1} can be written as
\[
\bP(\Lambda\subseteq \hat{S}_m, \hat{\Lambda}_{2,n}\neq\Lambda)=\sum_{S\subseteq \llb -d,-1\rrb: \Lambda\subseteq S,|S|\leq\ell_*}\bP(\hat{S}_m=S,\hat{\Lambda}_{2,n}\neq\Lambda).
\]

Now for any $S\in \llb -d,-1\rrb$ such that $\Lambda\subseteq S,|S|\leq\ell_*$, it follows from the union bound that
\[
\bP(\hat{S}_m=S,\hat{\Lambda}_{2,n}\neq\Lambda)\leq \sum_{j\in\Lambda}\bP(\hat{S}_m=S,j\notin\hat{\Lambda}_{2,n})+\sum_{j\in S\setminus\Lambda}\bP(\hat{S}_m=S,j\in\hat{\Lambda}_{2,n}).
\]
By proceeding similarly as in the proof of Item 1 of Theorem \ref{thm:consistency_PCP_estimator}, one can deduce that for any $j\in S\setminus\Lambda$, 
$$
\bP(\hat{S}_m=S,j\in\hat{\Lambda}_{2,n})\leq 8|A|\left\lceil\frac{\log(\mu (n-m-d)/\alpha+2)}{\log(1+\varepsilon)}\right\rceil e^{-\alpha}\sum_{x\in A^S}\bP(\hat{S}_m=S,\bar{N}_{m,n}(x)>0),
$$
so that
\begin{multline*}
\sum_{j\in S\setminus\Lambda}\bP(\hat{S}_m=S,j\in\hat{\Lambda}_{2,n})\leq  8(\ell_*-|\Lambda|)|A|\left\lceil\frac{\log(\mu (n-m-d)/\alpha+2)}{\log(1+\varepsilon)}\right\rceil e^{-\alpha}\\ \times\sum_{x\in A^S}\bP(\hat{S}_m=S,\bar{N}_{m,n}(x)>0).   
\end{multline*}
Since 
$$
\sum_{S\subseteq \llb -d,-1\rrb: \Lambda\subseteq S,|S|\leq\ell_*}\sum_{x\in A^S}\bP(\hat{S}_m=S,\bar{N}_{m,n}(x)>0)\leq (n-m-d)\bP(\Lambda\subseteq\hat{S}_m)
$$
we then deduce that
\begin{multline*}
\sum_{S\subseteq \llb -d,-1\rrb: \Lambda\subseteq S,|S|\leq\ell_*}\sum_{j\in S\setminus\Lambda}\bP(\hat{S}_m=S,j\in\hat{\Lambda}_{2,n})\leq  8(\ell_*-|\Lambda|)|A|\\\times\left\lceil\frac{\log(\mu (n-m-d)/\alpha+2)}{\log(1+\varepsilon)}\right\rceil  e^{-\alpha} (n-m-d).   
\end{multline*}

Following the steps of the proof of Item 2 of of Theorem \ref{thm:consistency_PCP_estimator}, we can also show that
\begin{multline*}
\sum_{S\subseteq \llb -d,-1\rrb: \Lambda\subseteq S,|S|\leq\ell_*}\sum_{j\in\Lambda}\bP(\hat{S}_m=S,j\notin\hat{\Lambda}_{2,n},\delta_j\geq \gamma^S_{m,n,j})\leq  8|\Lambda||A|\\\times\left\lceil\frac{\log(\mu (n-m-d)/\alpha+2)}{\log(1+\varepsilon)}\right\rceil  e^{-\alpha} \sum_{S\subseteq \llb -d,-1\rrb: \Lambda\subseteq S,|S|\leq\ell_*}\bP(\hat{S}_m=S)\\
\leq 8|\Lambda||A|\left\lceil\frac{\log(\mu (n-m-d)/\alpha+2)}{\log(1+\varepsilon)}\right\rceil  e^{-\alpha},
\end{multline*}
where $\gamma^S_{m,n,j}$ is defined as in \eqref{Def:thresholds_for_PCP_esti} with 
$t_{m,n,j}=\max_{b,c\in A:b\neq c}\min_{(x_S,y_S)\in \mathcal{C}_j(b,c)}t_{m,n}(x_S,y_S)$ in the place of $t_{n,j}$.

Hence, it remains to estimate 
$$
\sum_{S\subseteq \llb -d,-1\rrb: \Lambda\subseteq S,|S|\leq\ell_*}\sum_{j\in\Lambda}\bP(\hat{S}_m=S,j\notin\hat{\Lambda}_{2,n},\delta_j< \gamma^S_{m,n,j}).
$$
By proceeding similarly to the proof of Item 3 of Theorem \ref{thm:consistency_PCP_estimator}, one can show that for each $S\subseteq \llb -d,-1\rrb$ such that $|S|\leq\ell_*$, 
\begin{equation*}
\bP\left(\gamma^S_{m,n,j}>\delta_j \right)\leq
6|A|(|A|-1)\exp\left\{\frac{-(\Delta p_{min})^2(n-m-d)^2}{2(n-m)|A|^{2(\ell_*-1)}(\ell_*+1)^2}\left(1-\frac{n_{min}}{n-m-d}\right)
^2\right\},
\end{equation*}
for all $j\in \Lambda$ as long as 
$n$ satisfies $n>m+d+n_{min}.$
By using this upper bound and by recalling that
$\sum_{S\subseteq \llb -d,-1\rrb: \Lambda\subseteq S,|S|\leq\ell_*}\leq \sum_{k=0}^{\ell_*}\binom{d}{k}\leq (\ell_*+1) \binom{d}{\ell_*}$ (since $2\ell_*\leq d$), we deduce that
\begin{multline*}
\sum_{S\subseteq \llb -d,-1\rrb: \Lambda\subseteq S,|S|\leq\ell_*}\sum_{j\in\Lambda}\bP(\hat{S}_m=S,j\notin\hat{\Lambda}_{2,n},\delta_j< \gamma^S_{m,n,j})\leq \\
6|A|(|A|-1)(\ell_*+1) \binom{d}{\ell_*}\exp\left\{\frac{-(\Delta p_{min})^2(n-m-d)^2}{2(n-m)|A|^{2(\ell_*-1)}(\ell_*+1)^2}\left(1-\frac{n_{min}}{n-m-d}\right)
^2\right\},
\end{multline*}
for all $j\in \Lambda$ whenever $n>m+d+n_{min},$ implying the result.
\end{proof}

\subsubsection{Proof of Corollary \ref{cor:consistency_FSC}}
\label{proof:cor_conist_FSC_Esti}

\begin{proof}[Proof of Corollary \ref{cor:consistency_FSC}]
Assumptions \ref{Ass:MTD_stationary} and \ref{Ass:Concentration_inequalities} are satisfied for all values of $n$, since $p^{\star}_n\geq p^{\star}_{min}$ and $\Delta^{\star}_n\geq \Delta^{\star}_{min}$ for positive constants  $p^{\star}_{min}$ and $\Delta^{\star}_{min}$ for all n. 
Since the sequence of MTD models also satisfy Assumption \ref{Ass:non_determinist_conditional_averages},  the result follows from Theorem \ref{thm:consistency_FSC} and Remark \ref{rmk:consist_FSC}-Item (b).
\end{proof}

\subsection{Proofs of Section \ref{sec:supereficient_bin_MTD}}

\subsubsection{Proof of Theorem \ref{thm:FSC_consistency_binary}}
\label{App:proof_them_FSC_consis_binary}

\begin{proof}[Proof of Theorem \ref{thm:FSC_consistency_binary}]

Notice that we can write for each $j\in \llb -d, -1\rrb$,
$$
m_j(X_{j})=(p_j(1|1)-p_j(1|0)X_{-j}+p_j(1|0),
$$
so that equality \eqref{identify_conditional_cov} can be rewritten for any $j\in \llb -d, -1\rrb$ satisfying $(p_j(1|1)-p_j(1|0))\neq 0$, $S\subseteq \llb -d, -1\rrb\setminus\{j\}$ and $x_{S}\in\{0,1\}^{S}$,  as
\begin{equation*}
\mCov_{x_{S}}(X_0,X_{j})=\sum_{\ell\in\Lambda\setminus S}\Delta_\ell\mCov_{x_{S}}(X_{\ell},X_{j}),
\end{equation*}
where $\Delta_{\ell}=\lambda_{\ell}(p_{\ell}(1|1)-p_{\ell}(1|0))$ for $\ell\in\Lambda$. Recalling that $\bar{\nu}_{j,S}=2\bE\left(\left|\mCov_{X_{S}}(X_0,X_{j})\right|\right)$ in the binary case, we can deduce that 
for any $S\subseteq \llb -d, -1\rrb$, $x_{S}\in \{0,1\}^{S}$ and $j\in \llb -d, -1\rrb\setminus S$,
\begin{equation*}
\bar{\nu}_{j,S}=2\bE\left(\left|\sum_{\ell\in\Lambda\setminus S}\Delta_{\ell}\mCov_{X_{S}}(X_{\ell},X_{j})\right|\right).
\end{equation*}

As a consequence, it follows that for $S\subset \Lambda$ and $j\in \Lambda\setminus S$,
\begin{multline}
\label{ineq_1_proof_consis_FSC_binary_alphabet}
\bar{\nu}_{j,S}\geq 2\bE\left(\text{Var}_{X_{S}}(X_j)|\Delta_j|-\sum_{\ell\in\Lambda\setminus S:\ell\neq j}|\Delta_{\ell}||\mCov_{X_{S}}(X_{\ell},X_{j})|\right)\nonumber
\\
=2\bE\left(|\Delta_j|\text{Var}_{X_{S}}(X_j)\times \right.\\ \left. \left(1-\sum_{\ell\in\Lambda\setminus S:\ell\neq j}\frac{|\Delta_{\ell}|}{|\Delta_{j}|}|\bP_{x_{S}}(X_{\ell}=1|X_{j}=1)-\bP_{x_{S}}(X_{\ell}=1|X_{j}=0)|\right)\right),
\end{multline}
where in the second inequality we have used that 
\begin{equation*}
|\mCov_{x_{S}}(X_{\ell},X_{j})|=\text{Var}_{x_{S}}(X_j)|\bP_{x_{S}}(X_{\ell}=1|X_{j}=1)-\bP_{x_{S}}(X_{\ell}=1|X_{j}=0)|.
\end{equation*}
By Assumption \ref{ass:lower_bound_kappa}, we then deduce that
\begin{equation}
\label{ineq_2_proof_consis_FSC_binary_alphabet}
\bar{\nu}_{j,S}\geq 2\Gamma_1 |\Delta_j|\bE(\text{Var}_{X_{S}}(X_j)).    
\end{equation}

Now, take $S\subset \Lambda$ and let $j_S\in\arg\min_{\ell \in \Lambda\setminus S} |\Delta_j|\bE(\text{Var}_{X_{S}}(X_{\ell}))$. For any $j\in (\Lambda)^c$, use the triangle inequality, equality \eqref{ineq_2_proof_consis_FSC_binary_alphabet} and Assumption \ref{ass:incoherence_condition_binary} to deduce that
\begin{align}
\label{ineq_3_proof_consis_FSC_binary_alphabet}
\bar{\nu}_{j,S}\leq 2\bE\left(\sum_{\ell\in\Lambda\setminus S}|\Delta_{\ell}||\mCov_{X_{S}}(X_{\ell},X_{j})|\right)\nonumber
\\
\leq 2|\Delta_{j_S}|\bE\left(\text{Var}_{X_{S}}(X_{j_S})\right)\Gamma_2
\end{align}
Using that $\delta_j=|\Delta_j|$ and combing inequalities \eqref{ineq_2_proof_consis_FSC_binary_alphabet}
and \eqref{ineq_3_proof_consis_FSC_binary_alphabet}, it  follows then that
\begin{equation}
\max_{j\in \Lambda\setminus S}\bar{\nu}_{j,S}-\max_{j\in (\Lambda)^c}\bar{\nu}_{j,S}\geq 2(\Gamma_1-\Gamma_2)|\Delta_{j_{S}}|\bE\left(\text{Var}_{X_{S}}(X_{j_S})\right), 
\end{equation}
where we have used also that $\max_{j\in \Lambda\setminus S}\bar{\nu}_{j,S}\geq \bar{\nu}_{j_S,S}$.
Using that $\bE\left(\text{Var}_{X_{S}}(X_{j_S})\right)\geq (p^{\star})^2$ and $|\Delta_{j_{S}}|\geq \min_{j\in\Lambda}|\Delta_j|$ we obtain that
$$
\min_{S\subset\Lambda}\left(\max_{j\in \Lambda\setminus S}\bar{\nu}_{j,S}-\max_{j\in (\Lambda)^c}\bar{\nu}_{j,S}\right)\geq 2(\Gamma_1-\Gamma_2)p^{2}_{min} \min_{j\in\Lambda}|\Delta_j| .
$$
concluding the first half of the proof.

To show the second assertion of the theorem, take $S\subset\Lambda$, let $j^*_S\in \arg\max_{j\in \Lambda\setminus S}\bar{\nu}_{j,S}$ and note that on $G_n(\ell,\xi),$
$$
\max_{j\in \Lambda\setminus S}\hat{\nu}_{n,j,S}\geq \hat{\nu}_{n,j^*_S,S} \geq \bar{\nu}_{j^*_S,S}-\xi=\max_{j\in \Lambda\setminus S}\bar{\nu}_{j,S}-\xi. 
$$
Similarly, one can show that on $G_n(\ell,\xi),$
$$
\max_{j\in (\Lambda)^c}\hat{\nu}_{n,j,S}\leq \max_{j\in (\Lambda)^c}\bar{\nu}_{j,S}+\xi. 
$$
As a consequence, it follows that
$$
\max_{j\in \Lambda\setminus S}\hat{\nu}_{n,j,S}-\max_{j\in (\Lambda)^c}\hat{\nu}_{n,j,S}\geq \left(\max_{j\in \Lambda\setminus S}\bar{\nu}_{j,S}-\max_{j\in (\Lambda)^c}\bar{\nu}_{j,S}\right) -2\xi,
$$
whenever $G_n(\ell,\xi)$. By taking $\xi$ as in \eqref{choice_xi_consis_with_FS_step_only}, we have that
$$
\max_{j\in \Lambda\setminus S}\hat{\nu}_{n,j,S}-\max_{j\in (\Lambda)^c}\hat{\nu}_{n,j,S}>0,
$$
implying that $\arg\max_{j\in S^c}\hat{\nu}_{n,j,S}\in \Lambda$ for all $S\subset \Lambda$, and the result follows.
\end{proof}

\subsubsection{Proof of Corollary \ref{cor:consistency_SEE}}
\label{sec:prof_cor_consist_SEE}

\begin{proof}[Proof of Corollary \ref{cor:consistency_SEE}]
By Theorem \ref{thm:FSC_consistency_binary}, we have that
$$
\bP(\hat{\Lambda}_{2,n}\neq\Lambda)\leq \bP(G^c_m(\xi,L))+\bP(\Lambda\subseteq \hat{S}_m, \hat{\Lambda}_{2,n}\neq\Lambda),
$$
for any $\xi<(\Gamma_1-\Gamma_2)p^{\star}_{min}\min_{j\in\Lambda}\delta_j.$

By Proposition \ref{prop:G_holds_high_proba}, we have that

$$
\bP(G^c_m(\xi,L))\leq 2d(L+1){d \choose L} |A|^{L+2}\exp\left\{-\frac{{\xi}^2(m-d)^2\Delta^2}{18|A|^{2(L+2)}m(L+2)^2}\right\}.
$$
By taking $\xi=(\Gamma_1-\Gamma_2)p^{\star}_{min}\min_{j\in\Lambda}\delta_j$, one can check that if 
\begin{equation*}
 \min_{j\in\Lambda}\delta_j\geq C_1\frac{\log(n)}{n},   
\end{equation*}
for some constant $C_1=C_1(\beta,L,\Delta^*_{min},p^*_{min}, \Gamma_1,\Gamma_2)$, then
$\bP(G^c_m(\xi,L))\to 0$ as $n\to\infty.$

By proceeding exactly as in the proof of Theorem \ref{thm:consistency_FSC} and using \ref{rmk:consist_FSC}-item (b), we can show that 
\begin{multline*}
\bP(\Lambda\subseteq \hat{S}_m, \hat{\Lambda}_{2,n}\neq\Lambda)\leq \\
 6|A|(L+1){d \choose L}\left[ d|A|^{L+1}(|A|-1)|\Lambda|\exp\left\{-\frac{(\Delta p_{min})^2(n-d)^2}{8n(L+1)^2|A|^{2(L-1)}}\right\}\right]
\\+8|A|\left((L-|\Lambda|)(n-m-d)+|\Lambda|\right)\left\lceil\frac{\log(\mu (n-m-d)/\alpha+2)}{\log(1+\varepsilon)}\right\rceil e^{-\alpha},
\end{multline*}
as long as 
$$
n\geq d+\frac{C|A|^{L}\alpha}{p_{min}\delta^2_{min}},
$$
where $C=C(\mu,\varepsilon).$

Therefore, using that $\Delta \geq \Delta^{\star}_{min}$, $p_{min}\geq p^{\star}_{min}$, $d=n\beta$, $\alpha=(1+\eta)\log(n)$, we also see that if  $\delta^2_{min}\geq C_2\frac{\log(n)}{n}$ for some $C_2=C_2(\mu,\varepsilon,\eta,\Delta^{\star}_{min},p^{\star}_{min},\beta,L)$ then $\bP(\Lambda\subseteq \hat{S}_m, \hat{\Lambda}_{2,n}\neq\Lambda)\to 0$ as $n\to\infty$.

By taking $C=C_1\vee C_2$, we deduce that $\bP(\hat{\Lambda}_{2,n}\neq\Lambda)\to 0$ as $n\to\infty$ as long as  $\delta^2_{min}\geq C\frac{\log(n)}{n}$, and the result follows.

\end{proof}

\subsection{Proofs of Section \ref{Sec:ETP}}
\label{sec_proof_section_ETP}

\begin{proof}[Proof of Theorem \ref{THM:ETP}]

By the union bound, we have that
\begin{multline*}
\bP\left(\bigcup_{a\in A}\bigcup_{x_{-d:-1}\in A^{\llb -d, -1\rrb}}\left\{|\hat{p}_n(a|x_{\hat{\Lambda}_{2,n}})-p(a|x_{\Lambda})|\geq \sqrt{\frac{2\alpha(1+\epsilon)\hat{V}_{n}(a,x_{\hat{\Lambda}_{2,n}})}{\bar{N}_{n}(x_{\hat{\Lambda}_{2,n}})}}\right.\right.\\
\left. \left. +\frac{\alpha}{3\bar{N}_{n}(x_{\hat{\Lambda}_{2,n}})} \right\}\right)
\leq
\bP(\Lambda\neq\hat{\Lambda}_{2,n})\\+\sum_{a\in A}\sum_{x_{\Lambda}\in A^{\Lambda}}\bP\left(
|\hat{p}_n(a|x_{\Lambda})-p(a|x_{\Lambda})|\geq \sqrt{\frac{2\alpha(1+\epsilon)\hat{V}_{n}(a,x_{\Lambda})}{\bar{N}_{n}(x_{\Lambda})}}+\frac{\alpha}{3\bar{N}_{n}(x_{\Lambda})}
\right).
\end{multline*}
Now, Proposition \ref{prop:bernstein_empirical_trans_prob_correct_var} implies that for any $a\in A$ and $x_{\Lambda}\in A^{\Lambda}$,  
\begin{multline*}
\bP\left(
|\hat{p}_n(a|x_{\Lambda})-p(a|x_{\Lambda})|\geq \sqrt{\frac{2\alpha(1+\epsilon)\hat{V}_{n}(a,x_{\Lambda})}{\bar{N}_{n}(x_{\Lambda})}}+\frac{\alpha}{3\bar{N}_{n}(x_{\Lambda})}
\right)
\\ \leq 4\left\lceil\frac{\log(\mu (n-d)/\alpha+2)}{\log(1+\varepsilon)}\right\rceil e^{-\alpha}\bP\left(\bar{N}_{n}(x_{\Lambda})>0\right),    
\end{multline*}
so that 
\begin{multline*}
\sum_{a\in A}\sum_{x_{\Lambda}\in A^{\Lambda}}\bP\left(
|\hat{p}_n(a|x_{\Lambda})-p(a|x_{\Lambda})|\geq \sqrt{\frac{2\alpha(1+\epsilon)\hat{V}_{n}(a,x_{\Lambda})}{\bar{N}_{n}(x_{\Lambda})}}+\frac{\alpha}{3\bar{N}_{n}(x_{\Lambda})}
\right)\\
\leq 4|A|\left\lceil\frac{\log(\mu (n-d)/\alpha+2)}{\log(1+\varepsilon)}\right\rceil e^{-\alpha}\times\sum_{x_{\Lambda}\in A^{\Lambda}}\bE\left[1\{\bar{N}_{n}(x_{\Lambda})>0\}\right].
\end{multline*}
Since
$$
\sum_{x_{\Lambda}\in A^{\Lambda}}\bE[1\{\bar{N}_{n}(x_{\Lambda})>0\}]\leq (n-d),
$$
the result follows.
\end{proof}

\subsection{Proof of Section \ref{sec:minimax_rate}}
\label{prof_prop:upper_bound_on_minimal_oscilation}

\subsubsection{Proof of Proposition \ref{prop:upper_bound_on_minimal_oscilation}}
\begin{proof}[Proof of Proposition \ref{prop:upper_bound_on_minimal_oscilation}]

First observe that since all MTDs are stationary Markov chains of order at most d, we can use the Markov property to show that

$$
KL(P^{(j)}_n||P^{(k)}_n)=KL(P^{(j)}_d||P^{(k)}_d)+(n-d)\bE^{(j)}(KL(p^{(j)}(\cdot|X_{-d:-1})||p^{(k)}(\cdot|X_{-d:-1}))).
$$
where $KL(p^{(j)}(\cdot|x_{-d:-1})||p^{(k)}(\cdot|x_{-d:-1}))$ denotes the Kullback-Leibler divergence between $p^{(j)}(\cdot|x_{-d:-1})$ and $p^{(k)}(\cdot|x_{-d:-1})$. 

Now note that for each fixed $x_{-:d_1}\in A^{\llb -d,-1\rrb}$, we can 
use the definition of the transition probabilities $p^{(j)}(\cdot|\cdot)$ together with Lemma 6 of Csizar and Talata (2005) to deduce that
$$
KL(p^{(j)}(\cdot|x_{-d:-1})||p^{(k)}(\cdot|x_{-d:-1}))\leq \lambda^2|p(1|1)-p(1|0)|^2(p_{min})^{-1}1_{\{x_j\neq x_k\}}.
$$
Since $p_{min}\geq (1-\lambda)/2$ and $\delta=\lambda |p(1|1)-p(1|0)|$, it follows from the above inequality that 
$$
\bE^{(j)}(KL(p^{(j)}(\cdot|X_{-d:-1})||p^{(k)}(\cdot|X_{-d:-1})))\leq \frac{2\delta^2}{1-\lambda}.
$$
By using similar arguments, one can also show that
\begin{multline*}
KL(P^{(j)}_d||P^{(k)}_d)\leq \frac{1}{p_{min}}\left(\max_{x_{-d:-1},y_{-d:-1}}|p^{(j)}(1|x_{-d:-1})-p^{(k)}(1|y_{-d:-1})|^2\right.\\ \left.
+\sum_{i=1}^{d-1}\max_{x_{-d:-1},y_{-d:-1}:x_{-i:-1}=y_{-i:-1}}|p^{(j)}(1|x_{-d:-1})-p^{(k)}(1|y_{-d:-1})|^2\right)\leq \frac{2d\delta^2}{1-\lambda}.
\end{multline*}
Therefore, it follows that
$$
KL(P^{(j)}_n||P^{(k)}_n)\leq \frac{2d\delta^2}{1-\lambda}+(n-d)\frac{2\delta^2}{1-\lambda}=\frac{2n\delta^2}{1-\lambda},
$$
and the result follows.
\end{proof}

\subsection{Computation of PCP and FSC estimators}
\label{computation_PCP_and_FSC_estimator}

We will first show that one can compute the PCP estimator with at most 
$O(|A|^2|S|(n-d))$ computations, as claimed in item (c) of Remark \ref{rmk:PCP_consistency}.

{\bf Proof of item (c) of Remark \ref{rmk:PCP_consistency}}. One way to compute the PCP estimator is the following. First, we compute $N_n(x_S,a)$ simultaneously for all pasts $x_S$ and symbols $a\in A$, and build the set $E_S=\{x_S: \bar{N}_n(x_S)>0\}$. This can be done with $O(n-d)$ computations. Indeed, we set initially $N_{n}(x_S,a)=0$ for all past $x_S$ and symbol $a\in A$. Then at each time $d+1\leq t\leq n$, we increment by $1$ the count of $N_n(x_S,a)$ for which $X_{t+S}=x_S$ and $X_t=a$, leaving all the other counts unchanged. Moreover, at the first time that $N_{n}(x_S,a)>0$, we include $x_S$ in the set $E$. Note that the cardinality of the set $E_S$ is at most $(n-d)$. Next, we need to compute $s_n(x_S)$ and $\hat{p}_n(\cdot|x_S)$ for each $x_S\in E$, which can be done with at most $O(|A|)$ additional computations. Once all these quantities are determined, we then need to test whether a given lag $j\in S$ has to be removed or not, by evaluating inequality \eqref{Def:PCP_estimator}
for all pairs of $(S\setminus\{j\})$-compatible pasts in $E_S$. This can be done with at most $O(|A|^2(n-d))$ more computations because 1) the number of different pasts in $E_S$ is at most $(n-d)$; 2) there are at most $|A|$ pasts in $E$ which are compatible with a fixed past $x_S$ in $E_S$; and 3) one can evaluate whether inequality \eqref{Def:PCP_estimator} holds or not to a given pair of compatible past with $O(|A|)$ additional computations. Finally, since the number of lags to be tested is $|S|$, it follows that we can implement the PCP estimator with at most $O(|A|^2|S|(n-d))$ computations, concluding the proof.

We now show that we can compute the FSC estimator by using at most $O(|A|^3\ell(m-d)(d-(\ell-1)/2)+|A|^2(n-m-d)\ell)$ computations, as stated in item (c) of Remark \ref{rmk:FSC_estimator}.

{\bf Proof of item (c) of Remark \ref{rmk:FSC_estimator}}.

By the item (c) of Remark \ref{rmk:PCP_consistency}, the CUT step can be computed with at most $O(\ell|A|^2(n-m-d))$ computations since the FS step outputs a subset of size $\ell$ and the size of the second half of the sample is $n-m$. Hence, the proof will be concluded if we show that the FS step can be computed with at most $O(|A|^3(m-d)(\ell d-(\ell-1)\ell/2)$ computations. To see that, let us fix $S\subseteq \llb -d, -1\rrb$ and $j\notin S$. Proceeding as in the proof item (c) of Remark \ref{rmk:PCP_consistency}, one can check that we compute $N_m(xba_{S\cup\{0,j\}})$ simultaneously for all configurations $xba_{S\cup\{j,0\}}$ and build the set $E_{S}=\{x_S:N_{m}(x_S)>0\}$ with $O(m-d)$ computations. Notice that the size of the set $E_{S}$ is most $(m-d)$.
Since for each $x_{S}\in E_{S}$, we need to perform at most  $O(|A|^3)$ additional operations to compute $\hat{\bP}_{m}(x_{S})$ and $\hat{\nu}_{j,m}(x_S)$, it follows that with at most $O(|A|^3(m-d))$ computations we can determine $\hat{\nu}_{m,j,S}$. Therefore,  the step 3 of the FS step (where we need to compute $\hat{\nu}_{m,j,S}$ for $j\in S^c$) can be implemented with $O(|A|^3(m-d)(d-|S|))$ calculations. Since we need to repeat step 3 of the FS step for $\ell$ different sets, we conclude that with at most 
$$
O(|A|^3(m-d)\sum_{|S|=0}^{\ell-1}(d-|S|))=O(|A|^3(m-d)(\ell d-(\ell-1)\ell/2)
$$
computations, we can implement the FS step. This concludes the proof.




\section{Martingale concentration inequalities}
\label{Sec:Mart_Conc_Ineq}

In the sequel, $\bN$ denotes the set of non-negative integers $\{0,1,\ldots\}$
Let $(\Omega,\cF,\bP)$ be a probability space. We assume that this probability space is rich enough so that the following stochastic processes may be defined on it. 
In what follows, let $(X_{t})_{t\in\bZ}$ be a Markov chain of order $d\in\bZ_+$, taking values on a finite alphabet $A$, with family of transition probabilities $\{p(\cdot|x_{-d:-1}): x_{-d:-1}\in \Supp\}$. 
Denote 
$\cF_t=\sigma(X_{-d:t})$ for $t\in\bN$.
For each $a\in A$, consider the stochastic process $M^{a}=((M^{a}_t))_{t\in\bN}$ defined as,
$$
M^{a}_t=1\{X_t=a\}-p(a|X_{(t-d):(t-1)}), \ t\in\bN.
$$ 

Let $H=(H_{t})_{t\in \bN}$ be a stochastic process taking values on a finite alphabet $B\subset\bR$, satisfying $H_0=0$ and $H_t\in\cF_{t-1}$ for all $t\in\bZ_+$, and consider $H\bullet M^a=(H\bullet M^a_t)_{t\in\bN}$ defined as,
\begin{equation}
\label{Def:M_t}
H\bullet M^a_{t}=
\sum_{s=0}^tH_s M^a_s, \ t\in \bN.
\end{equation}
Notice that $H\bullet M^a$ is adapted to the fitration $\bF:=(\cF_t)_{t\in\bN}$, that is $H\bullet M^a_t\in \cF_t$ for all $t\in\bN$. Also $H\bullet M^a_0=0$.
Recall the notation $\|B\|_{\infty}=\max_{b\in B}|b|$. 
\begin{lemma}
\label{Lemma:M_t_is_martingale}
Let $H\bullet M^a=(H\bullet M^a_t)_{t\in\bN}$ be the stochastic process defined in \eqref{Def:M_t}. Then $H\bullet M^a$ is a square integrable Martingale w.r.t. $\bF$ starting from $H\bullet M^a_0=0$. Moreover, the predictable quadratic variation of $H\bullet M^a$, denoted by $\langle H\bullet M^a\rangle= (\langle H\bullet M^a\rangle_t)_{t\in\bN}$, is given by
\begin{equation}
\label{Def:PQV_of_M_t}
\langle H\bullet M^a\rangle_t=
\sum_{s=0}^t H^2_sp\left(a|X_{(s-d):(s-1)}\right)\left(1-p\left(a|X_{(s-d):(s-1)}\right)\right), \ t\in\bN.
\end{equation}
Furthermore, for any $\lambda>0$ and $b>0$ such that $\|B\|_{\infty}\leq b$, the stochastic process
$$
\exp\left(\lambda H\bullet M^a-\frac{e^{\lambda b}-\lambda b-1}{b^2}\langle H\bullet M^a\rangle\right)=\left(\exp\left(\lambda H\bullet M^a_t-\frac{e^{\lambda b}-\lambda b-1}{b^2}\langle H\bullet M^a\rangle_t\right)\right)_{t\in\bN}
$$
is a supermartingale w.r.t. $\bF$ starting from $1$.
\end{lemma}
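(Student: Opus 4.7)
The plan is to verify the three claims in sequence, each following from the Markov property of $(X_t)$ together with the fact that $H_s$ is $\cF_{s-1}$-measurable, so that $H_s$ can be treated as a constant when conditioning on $\cF_{s-1}$.

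For the martingale and square-integrability assertions, I would first note that $H\bullet M^a$ is $\bF$-adapted by construction, that $H_0 = 0$ forces $H\bullet M^a_0 = 0$, and that the uniform bound $|H_s M^a_s| \le \|B\|_{\infty}$ (since $|M^a_s|\le 1$) gives $|H\bullet M^a_t| \le (t+1)\|B\|_\infty$, hence $L^2$. For the martingale increment, I would compute
\[
\bE[H\bullet M^a_{t+1}-H\bullet M^a_{t}\mid\cF_t]=H_{t+1}\,\bE[M^a_{t+1}\mid\cF_t],
\]
pulling out $H_{t+1}\in\cF_t$, and then use the Markov property and stationarity to identify $\bE[1\{X_{t+1}=a\}\mid\cF_t]=p(a\mid X_{t+1-d:t})$, so the conditional expectation of $M^a_{t+1}$ vanishes.

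For the predictable quadratic variation, the increment $H\bullet M^a_s - H\bullet M^a_{s-1}=H_sM^a_s$ gives
\[
\bE\bigl[(H_sM^a_s)^2\mid\cF_{s-1}\bigr]=H_s^2\,\bE\bigl[(M^a_s)^2\mid\cF_{s-1}\bigr]
=H_s^2\,p(a\mid X_{s-d:s-1})\bigl(1-p(a\mid X_{s-d:s-1})\bigr),
\]
since $M^a_s$ is the centred indicator $1\{X_s=a\}-p(a\mid X_{s-d:s-1})$ and its conditional variance is exactly that of a Bernoulli with parameter $p(a\mid X_{s-d:s-1})$. Summing over $s\in\{0,1,\ldots,t\}$ (the $s=0$ term vanishes because $H_0=0$) yields formula \eqref{Def:PQV_of_M_t}.

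For the exponential supermartingale, the key analytic input is the elementary inequality
\[
e^{\lambda y}\le 1+\lambda y+\frac{e^{\lambda b}-\lambda b-1}{b^2}\,y^2,\qquad |y|\le b,
\]
which I would justify by monotonicity of $y\mapsto (e^{\lambda y}-\lambda y-1)/y^2$ on $[-b,b]$ (checking the two endpoints and using the Taylor series with non-negative coefficients). Applying this with $y=H_{s+1}M^a_{s+1}$, which is bounded in modulus by $b\ge\|B\|_\infty$, conditioning on $\cF_s$, and using the martingale property established above to kill the linear term, gives
\[
\bE\!\left[e^{\lambda H_{s+1}M^a_{s+1}}\,\middle|\,\cF_s\right]\le \exp\!\left(\tfrac{e^{\lambda b}-\lambda b-1}{b^2}\,H_{s+1}^2\,\bE[(M^a_{s+1})^2\mid\cF_s]\right),
\]
where I have used $1+u\le e^u$. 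Writing $Z_t$ for the claimed exponential process and noting that $\langle H\bullet M^a\rangle_{s+1}-\langle H\bullet M^a\rangle_s$ is $\cF_s$-measurable and equals $H_{s+1}^2\bE[(M^a_{s+1})^2\mid\cF_s]$, the above inequality gives $\bE[Z_{s+1}\mid\cF_s]\le Z_s$. The main obstacle is being careful with this last step—verifying the scalar inequality and matching the constant $(e^{\lambda b}-\lambda b-1)/b^2$ with the predictable bracket increment; once that is pinned down the rest is routine bookkeeping, and $Z_0=1$ follows immediately from $H\bullet M^a_0=\langle H\bullet M^a\rangle_0=0$.
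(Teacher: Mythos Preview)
Your proposal is correct and follows essentially the same approach as the paper for the martingale property and the predictable quadratic variation. For the exponential supermartingale, the paper simply cites \citep{Raginsky_Sason_14} rather than proving it, whereas you supply the standard Bennett-type argument in full (the scalar inequality, conditioning, and $1+u\le e^u$); your version is thus more self-contained, and the only minor point to tighten is the justification of the monotonicity of $y\mapsto(e^{\lambda y}-\lambda y-1)/y^2$, which is cleanest via the integral representation $e^{\lambda y}-\lambda y-1=(\lambda y)^2\int_0^1(1-t)e^{t\lambda y}\,dt$ rather than the Taylor-coefficient heuristic you mention.
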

\begin{proof}
For each $t\in\bZ_+$, we have that $H_t\in \cF_{t-1}$ and also that $\bE\left[1\{X_t=a\}|\cF_{t-1}\right]=p(a|X_{(t-d):(t-1)})$. These two facts imply that for any $t\in\bZ_+$,   
\begin{equation*}
\bE\left[H_tM^a_t|\cF_{t-1} \right]=H_t\bE\left[M^a_t|\cF_{t-1} \right]=0,
\end{equation*}
which, in turn, implies that $E[H\bullet M^a_t|\cF_{t-1}]=H\bullet M^a_{t-1}$. Hence, $H\bullet M^a$ is a martingale w.r.t. to $\bF$. Since $|H\bullet M^a_t|\leq \|B\|_{\infty}t$ for $t\geq 1$, it follows that $H\bullet M^a$ is also square integrable. 

The predictable quadratic variation of $H\bullet M^a$ is defined as
$$
\langle H\bullet M^a\rangle_t=\sum_{s=1}^t\bE\left(\left(M_s-M_{s-1}\right)^2|\cF_{s-1}\right),
$$
for $t\in\bZ_+$ with $\langle H\bullet M^a\rangle_0=0.$ For any $t\in\bZ_+$, one can check that
$$
\left(H\bullet M^a_t-H\bullet M^a_{t-1}\right)^2=H_t^2(1\{X_t=a\}-2p(a|X_{(t-d):(t-1)})1\{X_t=a\}-p^2(a|X_{(t-d):(t-1)})).
$$
Using again that $H_t\in\cF_{t-1}$ and also that $\bE\left[1\{X_t=a\}|\cF_{t-1}\right]=p(a|X_{(t-d):(t-1)})$, one then deduces that for any $t\in\bZ_+$,
$$
\bE\left(\left(H\bullet M^a_t-H\bullet M^a_{t-1}\right)^2|\cF_{t-1}\right)=H_t^2p(a|X_{(t-d):(t-1)})(1-p(a|X_{(t-d):(t-1)})),
$$
which establishes \eqref{Def:PQV_of_M_t}. 
The proof that $\exp\left(\lambda H\bullet M^a -\frac{e^{\lambda b}-\lambda b-1}{b^2}\langle H\bullet M^a\rangle\right)$ is a supermartingale w.r.t $\bF$ can be found in \citep{Raginsky_Sason_14}.
\end{proof}


We will use Lemma \ref{Lemma:M_t_is_martingale} to prove the following concentration inequality. 
\begin{proposition}
\label{Concentration_ineq_martingale_EFI}
Let $H\bullet M^a=(H\bullet M^a_t)_{t\in\bN}$ be the stochastic process defined in \eqref{Def:M_t}.
Suppose that $\|B\|_{\infty}\leq b$ for some $b>0$. 
For any fixed $\alpha>0$ and $v>0$, we have for $t\in\bN$,
$$
\bP\left(H\bullet M^a_t\geq \sqrt{2v\alpha}+\frac{\alpha b}{3},\langle H\bullet M^a \rangle_t\leq v\right)\leq \exp\left(-\alpha\right)\bP(\langle H\bullet M^a \rangle_t>0).
$$
\end{proposition}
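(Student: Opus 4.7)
The overall strategy is a Chernoff-type argument applied to the exponential supermartingale from Lemma~\ref{Lemma:M_t_is_martingale}. For any $\lambda>0$, let
\[
Y^\lambda_t := \exp\!\left(\lambda\,H\bullet M^a_t - \frac{\psi(\lambda b)}{b^2}\,\langle H\bullet M^a\rangle_t\right), \qquad \psi(x)=e^x-x-1,
\]
so that $Y^\lambda_0=1$ and $\bE[Y^\lambda_t]\le 1$ by that lemma. Set $u:=\sqrt{2v\alpha}+\alpha b/3$ and $E:=\{H\bullet M^a_t\ge u,\;\langle H\bullet M^a\rangle_t\le v\}$. The inequalities defining $E$ together with $\psi(\lambda b)\ge 0$ give $Y^\lambda_t\ge\exp(\lambda u-\psi(\lambda b)v/b^2)$ on $E$, so a Markov-type bound will produce something of the shape $\bP(E)\le \exp(-\lambda u+\psi(\lambda b)v/b^2)\,C$. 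Optimizing over $\lambda$ and identifying the right constant $C$ are then the two tasks that remain.

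The factor $\bP(\langle H\bullet M^a\rangle_t>0)$ in the statement comes from a refinement of the Markov step. On the event $\{\langle H\bullet M^a\rangle_t=0\}$ every summand of the predictable quadratic variation in \eqref{Def:PQV_of_M_t} vanishes, forcing at each $s\le t$ either $H_s=0$ or $p(a|X_{(s-d):(s-1)})\in\{0,1\}$; in either subcase $H_sM^a_s=0$ almost surely, so $H\bullet M^a_t=0$ and $Y^\lambda_t=1$ on that event. Hence
\[
\bE\!\left[Y^\lambda_t\,\mathbf{1}_{\{\langle H\bullet M^a\rangle_t>0\}}\right] = \bE[Y^\lambda_t]-\bP(\langle H\bullet M^a\rangle_t=0)\le \bP(\langle H\bullet M^a\rangle_t>0).
\]
Since $u>0$, the event $E$ sits inside $\{\langle H\bullet M^a\rangle_t>0\}$ up to null sets, so Markov applied to the truncated variable $Y^\lambda_t\,\mathbf{1}_{\{\langle H\bullet M^a\rangle_t>0\}}$ yields
\[
\bP(E)\le \exp\!\left(-\lambda u+\frac{\psi(\lambda b)v}{b^2}\right)\bP(\langle H\bullet M^a\rangle_t>0).
\]

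To finish I would take the Bennett-optimal $\lambda^*=b^{-1}\log(1+ub/v)$. Because $e^{\lambda^*b}=1+ub/v$, a direct substitution collapses $\psi(\lambda^*b)v/b^2$ to $u/b-\lambda^*v/b$ and reduces the exponent above to $-(v/b^2)\psi^*(ub/v)$, where $\psi^*(y):=(1+y)\log(1+y)-y$ is the Fenchel conjugate of $\psi$. Setting $s:=\alpha b^2/v$ gives $ub/v=\sqrt{2s}+s/3$, and the proposition reduces to the single scalar inequality
\[
\psi^*\!\left(\sqrt{2s}+\tfrac{s}{3}\right)\ge s \qquad \text{for every } s>0.
\]
This is the main obstacle and the real crux: the classical Bernstein lower bound $\psi^*(y)\ge y^2/(2+2y/3)$ is provably too weak here, since plugging in $y=\sqrt{2s}+s/3$ yields $y^2/(2+2y/3)=s\cdot(2+2\sqrt{2s}/3+s/9)/(2+2\sqrt{2s}/3+2s/9)<s$ for every $s>0$, so one genuinely needs the full Bennett function rather than its Bernstein relaxation. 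I would prove the required inequality by introducing $f(s):=\psi^*(\sqrt{2s}+s/3)-s$, checking $f(0)=0$ and, using $\psi^{*\prime}(y)=\log(1+y)$ together with a careful Taylor expansion at the origin (which gives $f(s)\sim s^2/18$) and an asymptotic argument for large $s$ (where $\psi^*(y)\sim y\log y$ and $y\sim s/3$ make $\psi^*(y)$ dominate the linear function $s$), that $f$ is non-negative on $(0,\infty)$.
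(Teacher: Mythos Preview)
Your Chernoff/supermartingale setup and your treatment of the factor $\bP(\langle H\bullet M^a\rangle_t>0)$ are correct; in fact your argument for that factor (observing that $Y^\lambda_t=1$ on $\{\langle H\bullet M^a\rangle_t=0\}$ and subtracting) is a bit slicker than the paper's, which instead decomposes $\{\langle H\bullet M^a\rangle_t>0\}$ according to the first time the predictable variation becomes positive and applies the supermartingale property on each piece.

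The substantive difference is at the optimization step. The paper does \emph{not} pass to the exact Bennett conjugate $\psi^*$. Instead it uses the classical Bernstein upper bound $\psi(\lambda)\le \lambda^2/(2(1-\lambda/3))$ for $\lambda\in(0,3)$, so that (with $b=1$) the task becomes minimizing $\lambda\mapsto \dfrac{\lambda v}{2(1-\lambda/3)}+\dfrac{\alpha}{\lambda}$. Under the substitution $\mu=\lambda/(1-\lambda/3)$ one has $1/\lambda=1/\mu+1/3$, and the function collapses to $\mu v/2+\alpha/\mu+\alpha/3$, whose minimum over $\mu>0$ is exactly $\sqrt{2v\alpha}+\alpha/3$. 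This gives the stated bound in one line, with no residual scalar inequality to check.

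Your route via $\psi^*$ is valid, but the final step is where the gap lies: checking $f(0)=0$, the local expansion $f(s)\sim s^2/18$, and the large-$s$ asymptotics does not by itself yield $f(s)\ge 0$ for all $s>0$; you still need a global argument for intermediate $s$. The clean fix is precisely the paper's idea on the dual side: for $y=\sqrt{2s}+s/3$, pick $\lambda_0=\dfrac{3\sqrt{2s}}{3+\sqrt{2s}}\in(0,3)$ (equivalently $\mu_0:=\lambda_0/(1-\lambda_0/3)=\sqrt{2s}$). Then $\psi(\lambda_0)\le \lambda_0\mu_0/2$, and a direct computation gives $\lambda_0 y-\lambda_0\mu_0/2=s$, hence $\psi^*(y)\ge \lambda_0 y-\psi(\lambda_0)\ge s$. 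So your scalar inequality is true, but the shortest proof of it is exactly the Bernstein-upper-bound trick that the paper uses directly; you rejected the Bernstein \emph{lower} bound on $\psi^*$ (rightly, it is too weak), but the Bernstein \emph{upper} bound on $\psi$ is the tool that works.
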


\begin{remark}
This is basically  Lemma 5 of \citep{Imbuzeiro:15} (see the Economical Freedman's inequality provided in Inequality (41)) applied to the square integrable martingale $H\bullet M^a$.  The only difference is the factor 2 in front of the linear term $\frac{\alpha b}{3}$ which is not present here. Notice that for $t\in\bZ_+$, the concentration inequality above can be rewritten in the following form:
$$
\bP\left(H\bullet M^a_t\geq \sqrt{2v\alpha}+\frac{\alpha b}{3},\langle H\bullet M^a \rangle_t\leq v|\langle H\bullet M^a \rangle_t>0\right)\leq \exp\left(-\alpha\right).
$$
The conditioning on event $\{\langle H\bullet M^a\rangle_t>0\}$ reflects the fact that if $\langle H\bullet M^a\rangle_t=0$ almost surely, then $H\bullet M^a_t=H\bullet M^a_0=0$ almost surely as well.

\end{remark}

\begin{proof}
For $t=0$ the result holds trivially.
Now, suppose $t\in\bZ_+$. 
By considering the set $B/b=\{c/b:c\in B\}$ instead of $B$, it suffices to prove the case $b=1$.
To shorten the notation, we denote $M_t=H\bullet M^a_t$ in the sequel.
By the Markov property, we have that for any $\lambda>0$,
$$
\bP(\lambda M_t-\psi(\lambda)\langle M \rangle_t\geq \alpha)\leq \exp(-\alpha) \bE\left[ \exp\left(\lambda M_t-\psi(\lambda)\langle M\rangle_t\right)1\{\langle M\rangle_t>0\}\right],
$$
where $\psi(\lambda)=e^{\lambda }-\lambda-1$ and
we have used that if $\langle M\rangle_t=0$ almost surely, then $M_t=M_0=0$ almost surely.
By using the fact that $\left(\exp\left(\lambda M_t-\psi(\lambda)\langle M\rangle_t\right)\right)_{t\in\bN}$ is a supermartingale (Lemma \ref{Lemma:M_t_is_martingale} with $b=1$) together with the decomposition $$\{\langle M\rangle_t>0\}=\bigcup_{k=1}^t\{\langle M\rangle_k>0 \ \text{and} \ \langle M\rangle_j=0 \ \text{for all} \ j<k  \},$$
as in \citep{Imbuzeiro:15}, we can deduce that
$$
\bE\left[ \exp\left(\lambda M_t-\psi(\lambda)\langle M\rangle_t\right)1\{\langle M\rangle_t>0\}\right]\leq \bP(\langle M\rangle_t>0),
$$
which implies not only that for any $\lambda>0$,
\begin{equation}
\label{Key_ineq_proof_mart_concentration_ineq}
\bP\left(M_t\geq \frac{\psi(\lambda)}{\lambda }\langle M\rangle_t+\frac{\alpha}{\lambda} \right)\leq \exp(-\alpha)\bP(\langle M\rangle_t>0),
\end{equation}
but also that
$$
\bP\left(M_t\geq \frac{\psi(\lambda)}{\lambda }v+\frac{\alpha}{\lambda}, \langle M\rangle_t\leq v \right)\leq \exp(-\alpha)\bP(\langle M\rangle_t>0).
$$

Now, we use that for $\lambda\in (0,3)$ it holds that $\psi(\lambda)\leq \lambda^2(1-\lambda/3)^{-1}/2$. Hence,  from the above inequalities we deduce that for any $\lambda\in (0,3)$,
\begin{equation}
\label{Key_ineq_2_proof_mart_concentration_ineq}
 \bP\left(M_t\geq \frac{\lambda}{2 (1-\lambda/3)}\langle M\rangle_t+\frac{\alpha}{\lambda} \right)\leq \exp(-\alpha)\bP(\langle M\rangle_t>0),
\end{equation}
and also that
$$
\bP\left(M_t\geq \frac{\lambda}{2 (1-\lambda/3)}v+\frac{\alpha}{\lambda}, \langle M\rangle_t\leq v \right)\leq \exp(-\alpha)\bP(\langle M\rangle_t>0).
$$
Minimizing $\lambda\in (0,3)\mapsto \frac{\lambda}{(1-\lambda/3)}v+\frac{\alpha}{\lambda}$, the result follows.
\end{proof}
By using a peeling argument as in \citep{Hanses_Bouret_Rivoirard_15}, we deduce from the above result the following.
\begin{proposition}
\label{EFI_sencond_half}
Let $H\bullet M^a=(H\bullet M^a_t)_{t\in\bN}$ be the stochastic process defined in \eqref{Def:M_t}.
 Suppose that $\|B\|_{\infty}\leq b$ for some $b>0$. 
For $\epsilon>0$, $v>w>0$ and $\alpha>0$,
we have for $t\in\bN$,
\begin{multline*}
\bP\left(H\bullet M^a_t\geq \sqrt{2\alpha(1+\epsilon)\langle H\bullet M^a \rangle_t}+\frac{\alpha b}{3},w\leq \langle H\bullet M^a \rangle_t\leq v\right)\leq \\
\left\lceil\frac{\log(v/w+1)}{\log(1+\varepsilon)}\right\rceil
\exp\left(-\alpha\right)\bP(\langle H\bullet M^a \rangle_t>0).
\end{multline*}
\end{proposition}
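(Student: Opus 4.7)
The plan is to reduce Proposition \ref{EFI_sencond_half} to Proposition \ref{Concentration_ineq_martingale_EFI} by a standard geometric peeling of the random quadratic variation, thereby paying only a logarithmic factor in $v/w$ for not knowing $\langle H\bullet M^a\rangle_t$ exactly. Writing $M_t := H\bullet M^a_t$ and $V_t := \langle H\bullet M^a\rangle_t$ to lighten notation, let $E$ denote the event whose probability we must bound.

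First I would introduce the geometric grid $v_k := w(1+\varepsilon)^k$ for $k=0,1,\dots,K$, where $K$ is the smallest integer with $v_K \geq v$. Taking $K := \lceil \log(v/w+1)/\log(1+\varepsilon)\rceil$ works (since this exceeds $\log(v/w)/\log(1+\varepsilon)$), which will produce the constant announced in the statement. The slices $I_k := [v_{k-1}, v_k]$ for $k=1,\dots,K$ cover $[w,v]$, so I can decompose
\[
E \;\subseteq\; \bigcup_{k=1}^{K} E\cap\{V_t\in I_k\}.
\]

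Next, on each slice $\{V_t \in I_k\}$, observe that $V_t \geq v_{k-1} = v_k/(1+\varepsilon)$, hence $(1+\varepsilon)V_t \geq v_k$ and consequently
\[
\sqrt{2\alpha(1+\varepsilon)V_t} \;\geq\; \sqrt{2\alpha v_k}.
\]
Therefore on $E\cap\{V_t\in I_k\}$ we have both $V_t \leq v_k$ and $M_t \geq \sqrt{2\alpha v_k}+\alpha b/3$, so this event is contained in
\[
\Bigl\{M_t \geq \sqrt{2\alpha v_k}+\tfrac{\alpha b}{3},\; V_t \leq v_k\Bigr\}.
\]
Proposition \ref{Concentration_ineq_martingale_EFI} applied with $v=v_k$ bounds the probability of each such set by $e^{-\alpha}\bP(V_t>0)$.

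Finally, a union bound over $k=1,\dots,K$ yields
\[
\bP(E) \;\leq\; K\, e^{-\alpha}\bP(V_t>0) \;=\; \Bigl\lceil\tfrac{\log(v/w+1)}{\log(1+\varepsilon)}\Bigr\rceil e^{-\alpha}\bP(V_t>0),
\]
which is the claimed inequality. There is no genuine obstacle here beyond matching constants: the only care needed is to pick $K$ so that the slices truly cover $[w,v]$ while the counting of slices is consistent with the ceiling $\lceil \log(v/w+1)/\log(1+\varepsilon)\rceil$ stated in the proposition; the inequality $\log(v/w+1) > \log(v/w)$ gives some slack and makes this choice safe.
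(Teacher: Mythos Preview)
Your proof is correct and follows essentially the same geometric peeling argument as the paper: partition $[w,v]$ by the grid $v_k=w(1+\varepsilon)^k$, use $(1+\varepsilon)V_t\geq v_k$ on the $k$-th slice to replace the random threshold by the deterministic one, and apply the fixed-$v$ bound on each slice before union-bounding. The only cosmetic difference is that you invoke Proposition~\ref{Concentration_ineq_martingale_EFI} directly on each slice, whereas the paper restarts from the pre-optimization inequality \eqref{Key_ineq_2_proof_mart_concentration_ineq} and re-minimizes in $\lambda$; both routes yield the same per-slice bound $e^{-\alpha}\bP(\langle H\bullet M^a\rangle_t>0)$.
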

\begin{proof}
It suffices to prove the case $b=1$. The general case follows from this one by first replacing $B$ by $B/b=\{c/b:c\in B\}$ and then rearranging the terms properly. Let us denote $v_0=w$ and $v_k=(1+\varepsilon)v_{k-1}$ for $1\leq k\leq K:=\left\lceil\frac{\log(v/w+1)}{\log(1+\varepsilon)}\right\rceil.$ Notice that $v_{K}\geq v$, by the definition of $K$. To shorten the notation, we denote $H\bullet M^a_t=M_t$ in what follows.

Starting from \eqref{Key_ineq_2_proof_mart_concentration_ineq}, one can deduce that for any $0\leq k< K$ and $\lambda\in (0,3),$ we have
$$
\bP\left(M_t\geq \frac{\lambda}{2(1-\lambda/3)}\langle M\rangle_t+\frac{\alpha}{\lambda}, v_k\leq \langle M\rangle_t\leq v_{k+1}  \right)\leq \exp(-\alpha)\bP(\langle M\rangle_t>0),
$$
which, in turn, implies that 
$$
\bP\left(M_t\geq \frac{\lambda}{2(1-\lambda/3)} v_{k+1}+\frac{\alpha}{\lambda}, v_k\leq \langle M\rangle_t\leq v_{k+1}  \right)\leq \exp(-\alpha)\bP(\langle M\rangle_t>0).
$$
Minimizing w.r.t. to $\lambda\in (0,3)$ as in Proposition \ref{Concentration_ineq_martingale_EFI}, it then follows that
$$
\bP\left(M_t\geq \sqrt{2v_{k+1}\alpha}+\frac{\alpha }{3},v_{k}\leq \langle M \rangle_t\leq v_{k+1}\right)\leq \exp\left(-\alpha\right)\bP(\langle M \rangle_t>0).
$$
Now, on the event $\{v_{k}\leq \langle M \rangle_t\}$, we have that $\langle M \rangle_t (1+\varepsilon)\geq (1+\varepsilon)v_k=v_{k+1}$, so that the inequality above implies 
$$
\bP\left(M_t\geq \sqrt{2(1+\varepsilon)\langle M \rangle_t \alpha}+\frac{\alpha }{3},v_{k}\leq \langle M \rangle_t\leq v_{k+1}\right)\leq \exp\left(-\alpha\right)\bP(\langle M \rangle_t>0).
$$
Summing over $k$ the result follows (recall that $v\leq v_{K}$ by the choice of $K$).  
\end{proof}

Hereafter, let $m\in\N$ and consider a function $\varphi:A^{\llb 1, m\rrb}\times A^{\llb -d, -1\rrb}$ such that its supremum norm $\|\varphi\|_{\infty}=\max_{(x_{1:m},x_{-d:-1})\in A^{\llb 1, m\rrb}\times A^{\llb -d, -1\rrb}}|\varphi(x_{1:m},x_{-d:-1})|\leq b$. Here we use the convention that $\varphi$ is a function defined only on $A^{\llb -d, -1\rrb}$ when $m=0$. Given such a function $\varphi$, let us denote $H^{\varphi}=(H^{\varphi}_{t})_{t\geq 0}$ the stochastic process defined as $H^{\varphi}_0=\ldots=H^{\varphi}_{m+d}=0$ and $H^{\varphi}_t=\varphi(X_{1:m},X_{(t-d):(t-1)})$ for $t\geq d+m+1.$

Clearly, $H^{\varphi}_t\in \cF_{t-1}$ for all $t\in \bZ_+$.
From \eqref{Def:PQV_of_M_t}, one can check that the predictable quadratic variation $\langle H^{\varphi}\bullet M^{a} \rangle$ of the martingale $H^{\varphi}\bullet M^{a}$ is given by $\langle H^{\varphi}\bullet M^{a} \rangle_0=\ldots=\langle H^{\varphi}\bullet M^{a} \rangle_{m+d}=0$ and for $t\geq m+d+1,$  
\begin{equation}
\label{Def:PQV_of_M_varphi_t}
\langle H^{\varphi}\bullet M^{a} \rangle_t=
\sum_{s=m+d+1}^t \varphi^2(X_{1:m},X_{(s-d):(s-1)})p\left(a|X_{(s-d):(s-1)}\right)\left(1-p\left(a|X_{(s-d):(s-1)}\right)\right).
\end{equation}

As a direct consequence of Proposition \ref{EFI_sencond_half}, we derive the following result.

\begin{corollary}
\label{prop:Berstein_type_ineq_empirical_proba}
Let $X_{1:n}$ be a sample from a MTD model of order $d$ with set of relevant lags $\Lambda$. Let $\hat{\Lambda}_m$ be an estimator of $\Lambda$ computed from $X_{1:m}$, where $n>m$.  For each $x\in A^{\llb -d, -1\rrb}$, $a\in A$ and $S\subseteq \llb -d, -1\rrb$, let $\hat{p}_{m,n}(a|x_{S})$ be the empirical transition probability  defined in \eqref{def:emp_trans_proba} computed from $X_{m+1:n}$. Then for any $S\subseteq \llb -d, -1\rrb$ such that $\Lambda\subseteq S$, $\varepsilon>0$, $\alpha>0$ and $n\geq m+d+1$, we have
\begin{multline}
\label{Ineq:Berstein_type_ineq_empirical_proba}
  \bP\left(\hat{\Lambda}_m=S, |\hat{p}_{m,n}(a|x_{S})-p(a|x_{\Lambda})|\geq  \sqrt{\frac{2\alpha(1+\varepsilon) p(a|x_{\Lambda})(1-p(a|x_{\Lambda}))}{\bar{N}_{m,n}(x_{S})}}\right.  \\ \left.
+\frac{\alpha}{3\bar{N}_{m,n}(x_{S})}\right)  \leq 2\left\lceil \frac{\log(n-m-d+1)}{\log(1+\varepsilon)}\right\rceil e^{-\alpha}\bP\left(\bar{N}_{m,n}(x_{S})>0, \hat{\Lambda}_m=S\right).   
\end{multline}
In particular, 
\begin{multline}
\label{Ineq:Berstein_type_ineq_empirical_proba_Hat_Lambda_contains_Lambda}
  \bP\left(\Lambda\subseteq\hat{\Lambda}_m, |\hat{p}_{m,n}(a|x_{\hat{\Lambda}_m})-p(a|x_{\Lambda})|\geq  \sqrt{\frac{2\alpha(1+\varepsilon) p(a|x_{\Lambda})(1-p(a|x_{\Lambda}))}{\bar{N}_{m,n}(x_{\hat{\Lambda}_m})}}\right.  \\ \left.
+\frac{\alpha}{3\bar{N}_{m,n}(x_{\hat{\Lambda}_m})}\right)  \leq 2\left\lceil \frac{\log(n-m-d+1)}{\log(1+\varepsilon)}\right\rceil e^{-\alpha}\bP\left(\bar{N}_{m,n}(x_{\hat{\Lambda}_m})>0, \Lambda \subseteq\hat{\Lambda}_m\right).   
\end{multline}
\end{corollary}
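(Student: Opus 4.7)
The plan is to reduce the statement to an application of the Economical Freedman's Inequality (Proposition \ref{EFI_sencond_half}). The key is to design a function $\varphi$ whose associated martingale $H^\varphi \bullet M^a$ captures exactly the discrepancy $\hat{p}_{m,n}(a|x_S) - p(a|x_\Lambda)$ on the event $\{\hat{\Lambda}_m = S,\, \bar{N}_{m,n}(x_S) > 0\}$. For a fixed $S \supseteq \Lambda$ and $x_S \in A^S$, I would define
$$\varphi(X_{1:m}, x_{-d:-1}) = 1\{\hat{\Lambda}_m(X_{1:m}) = S\} \cdot 1\{x_j = (x_S)_j,\ j \in S\},$$
so that $\|\varphi\|_\infty \leq 1$ and $H_t^\varphi = 1\{\hat{\Lambda}_m = S\} \cdot 1\{X_{t+j} = (x_S)_j,\ j \in S\}$ for $t \geq m+d+1$. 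Since $\hat{\Lambda}_m \in \cF_m \subseteq \cF_{t-1}$ and the indicator on the past depends only on $X_{(t-d):(t-1)}$, the process $H^\varphi$ fits the predictability hypothesis required by Lemma \ref{Lemma:M_t_is_martingale}. Crucially, because $\Lambda \subseteq S$, whenever $H_t^\varphi = 1$ one has $p(a|X_{(t-d):(t-1)}) = p(a|x_\Lambda)$.

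With this choice, on the event $\{\hat{\Lambda}_m = S,\, \bar{N}_{m,n}(x_S) > 0\}$ the martingale and its predictable quadratic variation reduce to
$$H^\varphi \bullet M^a_n = \bar{N}_{m,n}(x_S)\bigl(\hat{p}_{m,n}(a|x_S) - p(a|x_\Lambda)\bigr), \qquad \langle H^\varphi \bullet M^a\rangle_n = \bar{N}_{m,n}(x_S)\, p(a|x_\Lambda)(1 - p(a|x_\Lambda)).$$
I would then invoke Proposition \ref{EFI_sencond_half} with $b = 1$, $w = p(a|x_\Lambda)(1-p(a|x_\Lambda))$ and $v = (n-m-d)\, w$. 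Since $1 \leq \bar{N}_{m,n}(x_S) \leq n-m-d$ on the relevant event, the bracketing $w \leq \langle H^\varphi\bullet M^a\rangle_n \leq v$ holds automatically, and the identity $v/w + 1 = n-m-d+1$ produces exactly the peeling factor appearing in the statement. Dividing through by $\bar{N}_{m,n}(x_S)$ inside the deviation event recovers the claimed Bernstein form. Running the same argument with $-\varphi$ in place of $\varphi$ controls the opposite-sign deviation, and a union bound over the two directions yields the factor $2$ and establishes \eqref{Ineq:Berstein_type_ineq_empirical_proba}. The degenerate case $p(a|x_\Lambda)\in\{0,1\}$ is trivial: the empirical transition probability is then almost surely equal to $p(a|x_\Lambda)$ on $\{\bar{N}_{m,n}(x_S)>0\}$, so the deviation event is empty.

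Inequality \eqref{Ineq:Berstein_type_ineq_empirical_proba_Hat_Lambda_contains_Lambda} follows by partitioning $\{\Lambda \subseteq \hat{\Lambda}_m\} = \bigsqcup_{S \supseteq \Lambda} \{\hat{\Lambda}_m = S\}$ and summing \eqref{Ineq:Berstein_type_ineq_empirical_proba} over such $S$: since the events $\{\hat{\Lambda}_m = S\}$ are disjoint, the right-hand sides aggregate into $\bP(\bar{N}_{m,n}(x_{\hat{\Lambda}_m}) > 0,\, \Lambda \subseteq \hat{\Lambda}_m)$ while the prefactor $2\lceil\log(n-m-d+1)/\log(1+\varepsilon)\rceil e^{-\alpha}$ is uniform in $S$.

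The main obstacle I anticipate is the bookkeeping around the random set $\hat{\Lambda}_m$: one must verify that folding the indicator $1\{\hat{\Lambda}_m = S\}$ into $\varphi$ does not spoil either the predictability of $H^\varphi$ or the supermartingale exponential of Lemma \ref{Lemma:M_t_is_martingale}. The resolution is that $\hat{\Lambda}_m$ is $\cF_m$-measurable and $m+d < t$ on the summation range, so the conditional-expectation manipulations in the proof of Lemma \ref{Lemma:M_t_is_martingale} carry over unchanged; the indicator behaves as a constant multiplier with respect to the conditioning on $\cF_{t-1}$.
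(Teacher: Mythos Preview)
Your proposal is correct and follows essentially the same approach as the paper: the same choice of $\varphi$, the same identification of $H^\varphi\bullet M^a_n$ and $\langle H^\varphi\bullet M^a\rangle_n$ on the relevant event, the same handling of the degenerate case $p(a|x_\Lambda)\in\{0,1\}$, the same application of Proposition \ref{EFI_sencond_half} with $b=1$, $w=p(a|x_\Lambda)(1-p(a|x_\Lambda))$, $v=(n-m-d)w$, and the same union over $\pm\varphi$ and summation over $S\supseteq\Lambda$. Your additional remarks on the predictability of $H^\varphi$ via $\hat{\Lambda}_m\in\cF_m$ are correct and make explicit a point the paper leaves implicit.
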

\begin{proof}
Summing in both sides of \eqref{Ineq:Berstein_type_ineq_empirical_proba} over $S\subseteq \llb -d, -1\rrb$ such that $\Lambda\subseteq S$, we obtain inequality \eqref{Ineq:Berstein_type_ineq_empirical_proba_Hat_Lambda_contains_Lambda}.
Thus, it remains to prove \eqref{Ineq:Berstein_type_ineq_empirical_proba}. To that end, 
take $\varphi(X_{1:m},X_{(t-d):(t-1)})=1\{\hat{\Lambda}_m=S\}1\{X_{t+j}=x_{j},j\in S\}$
and notice that in this case 
$$\langle H^{\varphi}\bullet M^a \rangle_{n}=1\{\hat{\Lambda}_m=S\}\bar{N}_{m,n}(x_{S})p(a|x_{\Lambda})(1-p(a|x_{\Lambda})).$$
So, if either $p(a|x_{\Lambda})=0$ or $p(a|x_{\Lambda})=1$, then we have necessarily $\langle H^{\varphi}\bullet M^a \rangle_{n}=0$ for all $n\geq m+d+1$, which implies that almost surely for all $n\geq m+ d+1$,
$$H^{\varphi}\bullet M^a_{n}=1\{\hat{\Lambda}_m=S\}(\bar{N}_{m,n}(x_{S}, a)-\bar{N}_{m,n}(x_{S})p(a|x_{\Lambda}))=0.$$
By noticing that $H^{\varphi}\bullet M^a_{n}=1\{\hat{\Lambda}_m=S\}\bar{N}_{m,n}(x_{S})(\hat{p}_{m,n}(a|x_{S})-p(a|x_{\Lambda}))$, it follows that, on the event $\{\hat{\Lambda}_m=S,\bar{N}_{m,n}(x_{S})>0\}$, we must have $\hat{p}_{m,n}(a|x_{S})=p(a|x_{\Lambda})$ almost surely and so the left-hand side of \eqref{Ineq:Berstein_type_ineq_empirical_proba} is 0 and the result holds trivially.

Let us now suppose $0<p(a|x_{\Lambda})<1$. In this case, we apply Proposition \ref{EFI_sencond_half} with $w=p(a|x_{\Lambda})(1-p(a|x_{\Lambda}))$, $v=(n-m-d)w$ and $b=1$ to deduce that,
\begin{multline}
\bP\left(\hat{\Lambda}_m=S,N^{*}_{n,m}(x_{S})(\hat{p}_{m,n}(a|x_{S})-p(a|x_{\Lambda}))\geq \right.  \\ \left. \sqrt{2\alpha(1+\varepsilon) p(a|x_{\Lambda})(1-p(a|x_{\Lambda}))\bar{N}_{m,n}(x_{S})}+
\frac{\alpha}{3}, \bar{N}_{m,n}(x_{S})>0\right) \\ \leq \left\lceil \frac{\log(n-m-d+1)}{\log(1+\varepsilon)}\right\rceil e^{-\alpha}\bP(1\{\hat{\Lambda}_m=S\}\bar{N}_{m,n}(x_{S})p(a|x_{\Lambda})(1-p(a|x_{\Lambda}))>0).   
    \end{multline}
To conclude the proof, observe that in this case 
\[
\{1\{\hat{\Lambda}_m=S\}\bar{N}_{m,n}(x_{S})p(a|x_{\Lambda})(1-p(a|x_{\Lambda}))>0\}=\{\hat{\Lambda}_m=S,\bar{N}_{m,n}(x_{S})>0\},
\] 
and use again Proposition  \ref{EFI_sencond_half} with $H^{-\varphi}\bullet M^a$ in the place of $H^{\varphi}\bullet M^a$ (by noting also that $\langle H^{\varphi}\bullet M^a \rangle = \langle H^{-\varphi}\bullet M^a \rangle$).
\end{proof}

\begin{remark}
Let us briefly comment on the results of Corollary \ref{prop:Berstein_type_ineq_empirical_proba}. Suppose $x\in A^{\llb -d, -1\rrb}$ and $a\in A$ are such that $0<p(a|x_{\Lambda})<1$ and also that $\hat{\Lambda}_m$ is a consistent estimator of $\Lambda$.
By the CLT for aperiodic and irreducible Markov Chains it follows that
$\sqrt{\bar{N}_{m,n}(x_{\Lambda})}(\hat{p}_{m,n}(a|x_{\Lambda})-p(a|x_{\Lambda}))1\{\hat{\Lambda}_{m}=\Lambda,\bar{N}_{m,n}(x_{\Lambda})>0\}$ converges in distribution (as $\min\{m,n\}\to\infty$) to a centered Gaussian random variable with variance $p(a|x_{\Lambda})(1-p(a|x_{\Lambda}))$.
This implies that for sufficiently large $n$, 
\begin{multline*}
\bP\left(\hat{p}_{m,n}(a|x_{\Lambda})-p(a|x_{\Lambda}) \right. \\ \left. \geq \sqrt{\frac{2\alpha p(a|x_{\Lambda})(1-p(a|x_{\Lambda}))}{\bar{N}_{m,n}(x_{\Lambda})}}|\hat{\Lambda}_{m}=\Lambda,\bar{N}_{m,n}(x_{\Lambda})>0\right)\leq  e^{-\alpha}.
\end{multline*}
Let us compare this heuristic argument with Corollary \ref{prop:Berstein_type_ineq_empirical_proba} applied to $S=\Lambda.$ In this case, in Inequality \eqref{Ineq:Berstein_type_ineq_empirical_proba}, the variance term $\sqrt{\frac{2\alpha(1+\varepsilon) p(a|x_{\Lambda})(1-p(a|x_{\Lambda}))}{\bar{N}_{m,n}(x_{\Lambda})}}$ can be made arbitrarily close to optimal value $\sqrt{\frac{2\alpha p(a|x_{\Lambda})(1-p(a|x_{\Lambda}))}{\bar{N}_{m,n}(x_{\Lambda})}}$, at the cost $\frac{1}{\log(1+\varepsilon)}$.  
Both the linear term $\frac{\alpha}{\bar{N}_{m,n}(x_{\Lambda})}$ and the $\log(n-m-d+1)$ factor are the price to pay to achieve the result which holds every $n\geq m+d+1$ and reflect the fact that $\hat{p}_{m,n}(a|x_{\Lambda})-p(a|x_{\Lambda})$ is Gaussian only asymptotically.

In particular, Corollary \ref{prop:Berstein_type_ineq_empirical_proba} improves the Economical Freedman's Inequality (as stated in \citep{Imbuzeiro:15} - Lemma 5, Inequality (42)) when restricted to the martingale $H^{\varphi}\bullet M^a.$
\end{remark}


In the sequel, let us denote $P^a=(P^a_t)_{t\in\bN }$, for each $a \in A$, the stochastic process defined as $P^a_t=p(a|X_{(t-d:t-1)})$ for each $t\in\bN.$ With this notation, notice that
\begin{equation}
\label{Def:upper_bound_PQV_of_Mt}
 H^2\bullet P^a_t=
 \sum_{s=0}^tH_s^2p(a|X_{(s-1):(s-d)}), \  t\in\bN,
\end{equation}
is such that $\langle H\bullet M^a \rangle_t\leq H^2\bullet P^a_t$ for all $t\in\bN$. In particular, Proposition \ref{Concentration_ineq_martingale_EFI} holds if we replace $\langle H\bullet M^a \rangle_t$ by $H^2\bullet P^a_t.$ A closer inspection of the proof of Proposition \ref{EFI_sencond_half} reveals that 
this proposition also holds with $H^2\bullet P^a_t$ in the place of $\langle H\bullet M^a \rangle_t$. In the next theorem, we show that we can replace $H^2\bullet P^a_t$ by a linear transformation of its empirical version which is crucial for our analysis.

\begin{theorem}
\label{Thm:MCI_empirical_quadratic_variation}
Let $H\bullet M^a=(H\bullet M^a_t)_{t\in\bN}$ be the stochastic process defined in \eqref{Def:M_t}.
 Suppose that $\|B\|_{\infty}\leq b$ for some $b>0$. 
For any fixed $\mu\in (0,3)$ satisfying $\mu>\psi(\mu)=\exp(\mu)-\mu-1$ and $\alpha>0$, define for $t\in\bN$,
$$
H^2\bullet \hat{P}^{a}_t=\frac{\mu}{\mu-\psi(\mu)}\sum_{s=0}^{t}H_s^2\hat{P}^a_s+\frac{b^2 \alpha}{\mu-\psi(\mu)},
$$
where $\hat{P}^a_s=1\{X_s=a\}$ for all $s\in\bN.$
Then, for any fixed $\epsilon>0$ and $v>w>0$, we have for any $t\in\bN$,
\begin{multline*}
\bP\left(H\bullet M^a_t\geq \sqrt{2(1+\epsilon)\alpha H^2\bullet \hat{P}^a_t}+\frac{\alpha b}{3},w\leq H^2\bullet \hat{P}^a_t\leq v\right)    \\ \leq 2 \left\lceil\frac{\log(v/w+1)}{\log(1+\varepsilon)}\right\rceil
\exp\left(-\alpha\right) \bP(\langle H\bullet M^a\rangle_t>0).
\end{multline*}
\end{theorem}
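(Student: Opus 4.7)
The plan is to reduce the empirical-variance inequality to the true-variance version of Proposition \ref{EFI_sencond_half} through a one-sided sandwich between $H^2\bullet P^a_t$ and its empirical proxy $\bar v_t := H^2\bullet\hat P^a_t$, and then to peel over geometric scales of $\bar v_t$. The engine is the auxiliary martingale $N := H^2\bullet M^a = \sum_{s\leq t} H_s^2(\hat P^a_s-P^a_s)$, whose increments are bounded in modulus by $b^2$, whose predictable quadratic variation obeys $\langle N\rangle_t = \sum_{s\leq t} H_s^4 P^a_s(1-P^a_s) \leq b^2\, H^2\bullet P^a_t$, and which satisfies the algebraic identity $-N_t = H^2\bullet P^a_t - \sum_{s\leq t} H_s^2 \hat P^a_s$. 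Applying Lemma \ref{Lemma:M_t_is_martingale} to $-N$ produces the nonnegative supermartingale $\exp(\lambda(-N_t) - \psi(\lambda b^2)\langle N\rangle_t/b^4)$ starting at $1$.

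Specializing $\lambda=\mu/b^2$ (so that $\psi(\lambda b^2)=\psi(\mu)$) and combining the enlargement $\langle N\rangle_t\leq b^2\, H^2\bullet P^a_t$ inside the exponent with the indicator-decomposition argument from the proof of Proposition \ref{Concentration_ineq_martingale_EFI} (now applied to $N$ in place of $H\bullet M^a$) yields
\[
\bP\bigl(\mu(-N_t)\geq \psi(\mu)\, H^2\bullet P^a_t + b^2\alpha\bigr)\;\leq\; e^{-\alpha}\, \bP(\langle N\rangle_t>0).
\]
Rewriting the event using the identity for $-N_t$ and the definition of $\bar v_t$ recognizes it as $\{H^2\bullet P^a_t\geq \bar v_t\}$, and the inclusion $\{\langle N\rangle_t>0\}\subseteq\{\langle H\bullet M^a\rangle_t>0\}$ (both sums share the positive factor $H_s^2 P^a_s(1-P^a_s)$) upgrades the indicator factor to give the sandwich
\begin{equation}
\label{eq:sandwich_plan}
\bP\bigl(H^2\bullet P^a_t\geq \bar v_t\bigr)\;\leq\; e^{-\alpha}\,\bP\bigl(\langle H\bullet M^a\rangle_t>0\bigr).
\end{equation}
Next, plugging $\langle H\bullet M^a\rangle_t\leq H^2\bullet P^a_t$ into Proposition \ref{Concentration_ineq_martingale_EFI} produces, for every fixed $v'>0$, the single-scale bound $\bP(H\bullet M^a_t\geq \sqrt{2\alpha v'}+\alpha b/3,\ H^2\bullet P^a_t\leq v')\leq e^{-\alpha}\bP(\langle H\bullet M^a\rangle_t>0)$. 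Setting $v_k=w(1+\varepsilon)^k$ and $K=\lceil\log(v/w+1)/\log(1+\varepsilon)\rceil$, and decomposing the event of interest into the levels $\{v_k\leq\bar v_t\leq v_{k+1}\}$, I use that $\sqrt{2\alpha(1+\varepsilon)\bar v_t}\geq \sqrt{2\alpha v_{k+1}}$ on level $k$ and split the level further by whether $H^2\bullet P^a_t\leq v_{k+1}$. The in-range branches are controlled per level by the single-scale bound and contribute $Ke^{-\alpha}\bP(\langle H\bullet M^a\rangle_t>0)$ after summing; the union of the out-of-range branches across all $k$ lies inside $\{H^2\bullet P^a_t>\bar v_t\}$ (since $\bar v_t\leq v_{k+1}<H^2\bullet P^a_t$ on each), and contributes a single $e^{-\alpha}\bP(\langle H\bullet M^a\rangle_t>0)$ via \eqref{eq:sandwich_plan}. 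The total $(K+1)e^{-\alpha}\bP(\cdots)\leq 2K e^{-\alpha}\bP(\cdots)$ is exactly the claimed bound.

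The main obstacle is producing \eqref{eq:sandwich_plan} with exactly the indicator factor $\bP(\langle H\bullet M^a\rangle_t>0)$: this requires threading the supermartingale for $-N$ through both the variance enlargement (which must face the correct way inside the exponent) and the decomposition-on-first-positive-time argument of Proposition \ref{Concentration_ineq_martingale_EFI}, which naturally produces $\bP(\langle N\rangle_t>0)$ and is only then relaxed to $\bP(\langle H\bullet M^a\rangle_t>0)$. A secondary subtlety is that the sandwich-violation subevents across distinct peeling levels must be unioned, not summed, before invoking \eqref{eq:sandwich_plan}; summing them would incur a spurious extra factor of $K$ in the final bound.
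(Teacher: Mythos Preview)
Your proof is correct and uses the same two ingredients as the paper: the sandwich inequality $\bP(H^2\bullet P^a_t\geq H^2\bullet\hat P^a_t)\leq e^{-\alpha}\bP(\langle H\bullet M^a\rangle_t>0)$ obtained from the auxiliary martingale $-H^2\bullet M^a$, and a geometric peeling over the empirical quantity. The only difference is in the order of assembly: the paper first inserts the sandwich into the $\lambda$-parametrized inequality \eqref{Key_ineq_2_proof_mart_concentration_ineq} to obtain $\bP(H\bullet M^a_t\geq \tfrac{\lambda}{2(1-\lambda/3)}H^2\bullet\hat P^a_t+\tfrac{\alpha}{\lambda})\leq 2e^{-\alpha}\bP(\langle H\bullet M^a\rangle_t>0)$ and then peels, while you peel first and invoke the sandwich once on the union of the out-of-range events. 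Both routes land on the constant $2\lceil\log(v/w+1)/\log(1+\varepsilon)\rceil$, and your observation that the out-of-range events must be unioned rather than summed is exactly what keeps the constant from inflating.
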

\begin{proof}
We prove only the case $b=1$.
Note that $-H^{2}\bullet M^{a}_t=H^2\bullet P^a_t-H^2\bullet \hat{P}^a_t.$ Also recall that $\langle H\bullet M^{a} \rangle_t\leq H^2\bullet P^a_t$. 

 We now proceed to the proof. We first use Inequality \eqref{Key_ineq_proof_mart_concentration_ineq} for the martingale $-H^{2}\bullet M^{a}$ together with that fact that $\langle -H^{2}\bullet M^{a} \rangle_t\leq H^{4}\bullet P^{a}_t\leq H^{2}\bullet P^{a}_t$ (the last inequality holds because $b=1$) to deduce that for any $\mu>0$,
$$
\bP\left(H^2\bullet P^a_t\geq H^2\bullet \hat{P}^a_t+\frac{\psi(\mu)}{\mu}H^2\bullet P^a_t+\frac{\alpha}{\mu} \right)\leq \exp(-\alpha)\bP(\langle H\bullet M^a \rangle_t>0),
$$
which implies that for any $\mu \in (0,3)$ satisfying $\mu>\psi(\mu),$ it holds
$$
\bP\left(H^2\bullet P^a_t\geq H^2\bullet \hat{P}^{a}_t\right)\leq \exp(-\alpha)\bP(\langle H\bullet M^a \rangle_t>0).
$$
Hence, combining this inequality with \eqref{Key_ineq_2_proof_mart_concentration_ineq}, we conclude that for any $\lambda\in (0,3)$ and any $\mu \in (0,3)$ satisfying $\mu>\psi(\mu),$
\begin{multline*}
\bP\left(H\bullet M^{a}_t\geq \frac{\lambda}{2 (1-\lambda/3)}H^2\bullet \hat{P}^{a}_t+\frac{\alpha}{\lambda} \right)\leq \\
\bP\left(H\bullet M^{a}_t\geq \frac{\lambda}{2 (1-\lambda/3)}H^2\bullet \hat{P}^{a}_t+\frac{\alpha}{\mu}, H^2\bullet P^a_t\leq H^2\bullet \hat{P}^{a}_t \right)\\+ \bP\left(H^2\bullet P^a_t\geq H^2\bullet \hat{P}^{a}_t \right)\leq 2\exp(-\alpha)\bP(\langle H\bullet M^a \rangle_t>0),
\end{multline*}
where we also used in the last inequality the fact that $\langle H\bullet M^a \rangle_t\leq H^2\bullet P^a_t$.

To conclude the proof, we need to follow the same steps as Proposition \ref{Concentration_ineq_martingale_EFI} and then use the peeling argument as in the proof of Proposition \ref{EFI_sencond_half} with $H^2\bullet \hat{P}^{a}_t$ in the place of $\langle H\bullet M^a \rangle_t$.
\end{proof}


As consequence of Theorem \ref{Thm:MCI_empirical_quadratic_variation}, we obtain the following result.

\begin{proposition}
\label{prop:bernstein_empirical_trans_prob_correct_var}
Let $X_{1:n}$ be a sample from a MTD model of order $d$ with set of relevant lags $\Lambda$. Let $\hat{\Lambda}_m$ be an estimator of $\Lambda$ computed from $X_{1:m}$ where $n>m$.  For any $x\in A^{\llb -d, -1\rrb}$, $a\in A$ and $S\subseteq \llb -d, -1\rrb$, let $\hat{p}_{m,n}(a|x_{S})$ be the empirical transition probability  defined in \eqref{def:emp_trans_proba} computed from $X_{m+1:n}$, and consider for $\alpha>0$ and $\mu\in (0,3)$ satisfying $\mu>\psi(\mu)=\exp(\mu)-\mu-1$, 
$$
\hat{V}_{m,n}(a,x,S)=\frac{\mu}{\mu-\psi(\mu)}\hat{p}_{m,n}(a|x_{S})+\frac{\alpha}{\mu-\psi(\mu)}\frac{1}{\bar{N}_{m,n}(x_{S})}.
$$
Then for any $S\subseteq \llb -d, -1\rrb$ such that $\Lambda\subseteq S$ and $n\geq m+d+1$, we have
\begin{multline}
\label{Ineq:concentration_ineq_empirical_threshold_adapative}
\bP\left(\hat{\Lambda}_m=S,|\hat{p}_{m,n}(a|x_{S})-p(a|x_{\Lambda})|\geq \sqrt{\frac{2\alpha(1+\epsilon)\hat{V}_{m,n}(a,x,S)}{\bar{N}_{m,n}(x_{S})}}+\frac{\alpha}{3\bar{N}_{m,n}(x_{S})}\right)\\ \leq 4\left\lceil\frac{\log(\mu (n-m-d)/\alpha+2)}{\log(1+\varepsilon)}\right\rceil e^{-\alpha}\bP\left(\hat{\Lambda}_m=S,\bar{N}_{m,n}(x_{S})>0\right). \end{multline}
In particular, 
\begin{multline}
\label{Ineq:concentration_ineq_empirical_threshold__adapative_Hat_Lambda_contains_Lambda}
\bP\left(\Lambda\subseteq\hat{\Lambda}_m,|\hat{p}_{m,n}(a|x_{\hat{\Lambda}_m})-p(a|x_{\Lambda})|\geq \sqrt{\frac{2\alpha(1+\epsilon)\hat{V}_{m,n}(a,x,\hat{\Lambda}_m)}{\bar{N}_{m,n}(x_{\hat{\Lambda}_m})}}+\frac{\alpha}{3\bar{N}_{m,n}(x_{\hat{\Lambda}_m})}\right)\\ \leq 4\left\lceil\frac{\log(\mu (n-m-d)/\alpha+2)}{\log(1+\varepsilon)}\right\rceil e^{-\alpha}\bP\left(\Lambda\subseteq\hat{\Lambda}_n,\bar{N}_{m,n}(x_{\hat{\Lambda}_m})>0\right). \end{multline}
\end{proposition}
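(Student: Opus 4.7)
The plan is to reduce the statement to a direct application of Theorem \ref{Thm:MCI_empirical_quadratic_variation}, following the same strategy already used in the proof of Corollary \ref{prop:Berstein_type_ineq_empirical_proba} but with the data-driven variance proxy $\hat{V}_{m,n}$ replacing the true variance.

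First, I would choose the test process $H^{\varphi}$ with $\varphi(X_{1:m},X_{(t-d):(t-1)}) = 1\{\hat{\Lambda}_m = S\}\,1\{X_{t+j}=x_j,\,j\in S\}$, so that $H^{\varphi}_t$ takes values in $\{0,1\}$ (hence $b=1$) and is $\cF_{t-1}$-measurable. Since $\Lambda\subseteq S$, on the event $\{X_{t+S}=x_S\}$ the transition probability $p(a|X_{(t-d):(t-1)})$ collapses to $p(a|x_\Lambda)$. Summing over $t\in\llb m+d+1,n\rrb$ yields the key identity $H^{\varphi}\bullet M^a_n = 1\{\hat{\Lambda}_m=S\}\,\bar{N}_{m,n}(x_S)(\hat{p}_{m,n}(a|x_S)-p(a|x_\Lambda))$ and, because $H^{\varphi}_s\in\{0,1\}$,
\[
H^2\bullet \hat{P}^a_n = \frac{\mu}{\mu-\psi(\mu)}\,1\{\hat{\Lambda}_m=S\}\,N_{m,n}(x_S,a) + \frac{\alpha}{\mu-\psi(\mu)}.
\]
On $\{\hat{\Lambda}_m=S,\bar{N}_{m,n}(x_S)>0\}$ this equals $\bar{N}_{m,n}(x_S)\,\hat{V}_{m,n}(a,x,S)$, which is precisely what is needed to match the random threshold in \eqref{Ineq:concentration_ineq_empirical_threshold_adapative} after dividing through by $\bar{N}_{m,n}(x_S)$.

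Next, I would select the range parameters in Theorem \ref{Thm:MCI_empirical_quadratic_variation} so that the range condition is automatic. Since $0\leq N_{m,n}(x_S,a)\leq n-m-d$, the process $H^2\bullet \hat{P}^a_n$ lies deterministically in $[w,v]$ with $w=\alpha/(\mu-\psi(\mu))$ and $v=(\mu(n-m-d)+\alpha)/(\mu-\psi(\mu))$, giving $v/w+1 = \mu(n-m-d)/\alpha+2$. Applied to $H^{\varphi}\bullet M^a$ and to $-H^{\varphi}\bullet M^a = H^{-\varphi}\bullet M^a$, and using $\{\langle H^{\varphi}\bullet M^a\rangle_n>0\}\subseteq \{\hat{\Lambda}_m=S,\bar{N}_{m,n}(x_S)>0\}$, a union bound multiplies the estimate of Theorem \ref{Thm:MCI_empirical_quadratic_variation} by a further factor of $2$ for the two-sided deviation, producing the factor $4$ in \eqref{Ineq:concentration_ineq_empirical_threshold_adapative}.

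It remains to handle the degenerate situation $p(a|x_\Lambda)\in\{0,1\}$, which forces $\langle H^{\varphi}\bullet M^a\rangle_n=0$ almost surely; in that case the random variable $H^{\varphi}\bullet M^a_n$ is itself a.s. zero, so $\hat{p}_{m,n}(a|x_S)=p(a|x_\Lambda)$ on $\{\hat{\Lambda}_m=S,\bar{N}_{m,n}(x_S)>0\}$ and the event in \eqref{Ineq:concentration_ineq_empirical_threshold_adapative} has probability zero. The complementary case $\{\bar{N}_{m,n}(x_S)=0\}$ is trivial because the proposed threshold is infinite there. Finally, inequality \eqref{Ineq:concentration_ineq_empirical_threshold__adapative_Hat_Lambda_contains_Lambda} is obtained by summing \eqref{Ineq:concentration_ineq_empirical_threshold_adapative} over $S\supseteq \Lambda$. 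The only minor subtlety I anticipate is double-checking the translation between the deviation of $H^{\varphi}\bullet M^a_n$ and that of $\hat{p}_{m,n}(a|x_S)-p(a|x_\Lambda)$ after dividing by $\bar{N}_{m,n}(x_S)$, particularly that $\sqrt{2(1+\varepsilon)\alpha\,\bar{N}_{m,n}(x_S)\hat{V}_{m,n}}/\bar{N}_{m,n}(x_S)=\sqrt{2(1+\varepsilon)\alpha\hat{V}_{m,n}/\bar{N}_{m,n}(x_S)}$; the rest is essentially bookkeeping.
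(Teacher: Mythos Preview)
Your proposal is correct and follows essentially the same route as the paper: the same choice of $\varphi$, the same identification of $H^{\varphi}\bullet M^a_n$ and $H^2\bullet\hat P^a_n$, the same range parameters $w=\alpha/(\mu-\psi(\mu))$ and $v=(\mu(n-m-d)+\alpha)/(\mu-\psi(\mu))$, the same treatment of the degenerate case $p(a|x_\Lambda)\in\{0,1\}$, and the final summation over $S\supseteq\Lambda$. If anything, you are slightly more explicit than the paper about why the range condition $w\le H^2\bullet\hat P^a_n\le v$ holds automatically and about the case $\bar N_{m,n}(x_S)=0$.
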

\begin{proof}
Summing in both sides of \eqref{Ineq:concentration_ineq_empirical_threshold_adapative} over $S\subseteq \llb -d, -1\rrb$ such that $\Lambda\subseteq S$, we obtain inequality \eqref{Ineq:concentration_ineq_empirical_threshold__adapative_Hat_Lambda_contains_Lambda}. Hence, it remains to show \eqref{Ineq:concentration_ineq_empirical_threshold_adapative}.
Arguing as in Corollary \ref{prop:Berstein_type_ineq_empirical_proba}
we need to consider only the case $0<p(a|x)<1$.

By applying Theorem \ref{Thm:MCI_empirical_quadratic_variation} with $H=H^{\pm \varphi}$
where $\varphi$ is as in the proof of Corollary \ref{prop:Berstein_type_ineq_empirical_proba}, $v=\frac{\mu}{\mu-\psi(\mu)}(n-m-d)+\frac{\alpha}{\mu-\psi(\mu)}$ and $w=\frac{\alpha}{\mu-\psi(\mu)}$, we obtain that
\begin{multline*}
\bP\left(\hat{\Lambda}_m=S,\bar{N}_{m,n}(x_{S})|\hat{p}_{m,n}(a|x_{S})-p(a|x_{\Lambda})|\geq \sqrt{2(1+\epsilon)\alpha\tilde{V}_{m,n}(a,x,S)}+\frac{\alpha}{3} \right)\\ \leq 4\left\lceil\frac{\log(\mu (n-m-d)/\alpha+2)}{\log(1+\varepsilon)}\right\rceil e^{-\alpha}\bP(1\{\hat{\Lambda}_m=S\}\bar{N}_{m,n}(x_{S})p(a|x_{\Lambda})(1-p(a|x_{\Lambda}))>0),
\end{multline*}
where $\tilde{V}_{m,n}(a,x,S)=\bar{N}_{m,n}(x_{S})\hat{V}_{m,n}(a,x,S)$.

By using that when $0<p(a|x)<1$,
$$\{1\{\hat{\Lambda}_m=S\}\bar{N}_{m,n}(x_{S})p(a|x_{\Lambda})(1-p(a|x_{\Lambda}))>0\}=\{\hat{\Lambda}_m=S,\bar{N}_{m,n}(x_{S})>0\},$$ 
and the fact that $\tilde{V}_{m,n}(a,x,S)=\bar{N}_{m,n}(x_{S})\hat{V}_{m,n}(a,x,S)$,   
we deduce \eqref{Ineq:concentration_ineq_empirical_threshold_adapative} from the above inequality.
\end{proof}

\vskip 0.2in
\bibliography{ref.bib}

\end{document}